\renewcommand*{\backref}[1]{\ifx#1\relax \else Page #1 \fi}
\renewcommand*{\backrefalt}[4]{%
  \ifcase #1 \footnotesize{(Not cited.)}%
  \or        \footnotesize{(Cited on page~#2.)}%
  \else      \footnotesize{(Cited on pages~#2.)}%
  \fi
}
\Crefname{ALC@unique}{Line}{Lines}
\newcommand{\M}{\mathcal{M}}
\newcommand{\br}{\mathbb{R}}
\newcommand{\St}{\mathrm{St}}
\newcommand{\grad}{\mathrm{grad}}
\newcommand{\prox}{\mathrm{prox}}
\newcommand{\proj}{\operatorname{Proj}}
\newcommand{\argmin}{\operatorname{argmin}}
\newcommand{\Exp}{\operatorname{Exp}}
\newcommand{\X}{\mathcal{X}}
\newcommand{\Tr}{\mathrm{Tr}}
\newcommand{\st}{\mathrm{ s.t. }}
\newcommand{\retr}{\mathrm{Retr}}
\newcommand{\Proj}{\mathrm{Proj}}
\newcommand{\R}{\mathbb{R}}
\newcommand{\LLg}{\mathcal{L}_{\rho,\gamma}}
\newcommand{\etal}{et al. }
\newcommand{\by}{\mathbf{y}}
\newcommand{\be}{\begin{equation}}
\newcommand{\ee}{\end{equation}}
\newcommand{\bx}{\mathbf{x}}
\newcommand{\CH}{\mathcal{H}}
\newcommand{\CO}{\mathcal{O}}
\newcommand{\CS}{\mathcal{S}}
\newcommand{\T}{\mathrm{T}}
\newtheorem{theorem}{Theorem}
\newtheorem{lemma}{Lemma}
\newtheorem{definition}{Definition}
\newtheorem{assumption}{Assumption}
\crefname{hypothesis}{Hypothesis}{Hypotheses}
\title{A Riemannian ADMM}
\author[1]{Jiaxiang Li}
\author[2]{Shiqian Ma}
\author[3]{Tejes Srivastava}
\affil[1]{Department of Electrical and Computer Engineering, University of Minnesota}
\affil[2]{Computational Applied Mathematics and Operations Research, Rice University}
\affil[3]{Department of Computer Science, University of Chicago}
\date{\today}
\begin{document}

\maketitle

\begin{abstract}
  We consider a class of Riemannian optimization problems where the objective is the sum of a smooth function and a nonsmooth function, considered in the ambient space. This class of problems finds important applications in machine learning and statistics such as the sparse principal component analysis, sparse spectral clustering, and orthogonal dictionary learning. We propose a Riemannian alternating direction method of multipliers (ADMM) to solve this class of problems. Our algorithm adopts easily computable steps in each iteration. The iteration complexity of the proposed algorithm for obtaining an $\epsilon$-stationary point is analyzed under mild assumptions. Existing ADMM for solving nonconvex problems either does not allow nonconvex constraint set, or does not allow nonsmooth objective function. Our algorithm is the first ADMM type algorithm that minimizes a nonsmooth objective over manifold -- a particular nonconvex set. Numerical experiments are conducted to demonstrate the advantage of the proposed method.
\end{abstract}

\section{Introduction}\label{sec_intro}
Optimization over Riemannian manifolds has drawn a lot of attention due to its applications in machine learning and related disciplines, including low-rank matrix completion \cite{boumal2011rtrmc,vandereycken2013low}, phase retrieval \cite{bendory2017non,sun2018geometric}, blind deconvolution \cite{huang2018blind} and dictionary learning \cite{cherian2016riemannian,Sun-CDR-part1-2017}. Riemannian optimization aims at minimizing an objective function over a Riemannian manifold. When the objective function is smooth, people have proposed to solve them using Riemannian gradient method, Riemannian quasi-Newton method, Riemannian trust-region method, etc. Work along this line has been summarized in the monographs \cite{absil2009optimization,boumal2020introduction} as well as some other references. Recently, due to increasing demand from application areas such as machine learning, statistics, signal processing and so on, there is a line of work designing efficient and scalable algorithms for solving Riemannian optimization problems with nonsmooth objectives. For example, people have studied Riemannian subgradient method \cite{li2019weakly}, Riemannian proximal gradient method \cite{chen2020proximal,Huang-Wei-RPG}, Riemannian proximal point algorithm \cite{chen2019manifold}, Riemannian proximal-linear algorithm \cite{wang2020manifold}, zeroth-order Riemannian algorithms \cite{Li-ZO-MOR}, and so on.

One thing that has not been widely considered is how to design alternating direction method of multipliers (ADMM) on manifolds. ADMM can be a perfect solver for the following nonsmooth optimization over Riemannian manifolds:
\begin{equation}\label{problem_nonsmooth}
\begin{split}
        \min_{x} & \ F(x) := f(x)+g(A x) \\
        \st & \ x\in\M,
\end{split}
\end{equation}
where $f$ is smooth and possibly nonconvex, $g$ is nonsmooth but convex, $\M$ is an embedded submanifold in $\br^n$, and matrix $A\in\br^{m\times n}$. Throughout this paper, the smoothness, Lipschitz continuity, and convexity of functions are interpreted as the functions are being considered in the ambient Euclidean space. If $\M=\br^n$, then problem
\eqref{problem_nonsmooth} reduces to the Euclidean case, and there exist efficient methods such as proximal gradient method, accelerated proximal gradient method, and ADMM for solving it. If the nonsmooth function vanishes, i.e., $g\equiv 0$, then problem \eqref{problem_nonsmooth} reduces to a smooth problem over manifold, and it can be solved by various methods for smooth Riemannian optimization. Therefore, the main challenge of solving \eqref{problem_nonsmooth} lies in the fact that there exist both manifold constraint and nonsmooth objective in the problem. As a result, a very natural idea to deal with this situation is to split the difficulty caused by the manifold constraint and nonsmooth objective. In particular, one can introduce an auxiliary variable $y$ and rewrite \eqref{problem_nonsmooth} equivalently as
\begin{equation}\label{problem_nonsmooth_splitting}
\begin{split}
        \min_{x,y} & \ f(x)+g(y) \\
        \st & \ A x=y,\ x\in\M.
\end{split}
\end{equation}
ADMM is a good candidate for solving \eqref{problem_nonsmooth_splitting}, because it can deal with the nonsmooth objective and the manifold constraint separately and alternately. Here we point out that there exist many ADMM-like algorithms for problems with nonconvex objective \cite{Jiang-Lin-Ma-Zhang-nonconvex-2016,yang2017alternating,themelis2020douglas, zhang2022iteration}. However, these algorithms do not allow the constraint set to be nonconvex. Therefore, they do not apply to the case where manifold constraints are present. In the following, we give a brief literature review on ADMM-like algorithms that allow manifold constraint -- a nonconvex constraint set.

The idea of splitting the nonsmooth objective and manifold constraint in \eqref{problem_nonsmooth} is not new. The first algorithm for this purpose is the SOC (splitting orthogonality constraints) algorithm proposed by Lai and Osher \cite{lai2014splitting}. SOC for solving \eqref{problem_nonsmooth} splits the problem in the following way:
\begin{equation}\label{problem_nonsmooth_soc_splitting}
\begin{split}
        \min_{x,y} & \ f(x)+g(A x) \\
        \st & \ x=y,\ y\in\M,
\end{split}
\end{equation}
and iterates as follows:
\begin{equation}\label{soc}
\begin{split}
        x^{k+1} := &  \ \argmin_x f(x)+g(A x)+\langle \lambda^k, x-y^k\rangle +\frac{\rho}{2}\|x - y^{k}\|_{2}^2 \\
        y^{k+1} := & \ \argmin_{y\in\M} \langle\lambda^k,x^{k+1}-y\rangle+\frac{\rho}{2}\|x^{k+1}-y\|_{2}^2 \\
        \lambda^{k+1} := & \ \lambda^{k} + \rho(x^{k+1} - y^{k+1}),
\end{split}
\end{equation}
where $\lambda$ denotes the Lagrange multiplier and $\rho>0$ is a penalty parameter. Note that the $x$-subproblem in \eqref{soc} is an unconstrained problem, which can be solved by proximal gradient method and many others, and the $y$-subproblem corresponds to a projection onto the manifold $\M$.
A closely related algorithm named MADMM (manifold ADMM), proposed in \cite{kovnatsky-madmm-2016} for solving \eqref{problem_nonsmooth_splitting}, iterates as follows:
\begin{equation}\label{madmm}
\begin{split}
        x^{k+1} := &  \ \argmin_{x\in\M} f(x)+\langle \lambda^k,Ax-y^k\rangle+\frac{\rho}{2}\|A x - y^{k}\|_{2}^2 \\
        y^{k+1} := & \ \argmin_{y} g(y) +\langle\lambda^k,Ax^{k+1}-y \rangle+ \frac{\rho}{2}\|A x^{k+1} - y\|_{2}^2 \\
        \lambda^{k+1} := & \ \lambda^{k} + \rho(A x^{k+1} - y^{k+1}).
\end{split}
\end{equation}
In \eqref{madmm}, the $x$-subproblem is a Riemannian optimization with smooth objective which can be solved by Riemannian gradient method, and the $y$-subproblem corresponds to the proximal mapping of function $g$. However, there lacks convergence guarantees for both SOC and MADMM.

When the nonsmooth term in \eqref{problem_nonsmooth} vanishes, i.e., $g\equiv 0$, an ADMM for nonconvex optimization can be used to solve \eqref{problem_nonsmooth} as illustrated in \cite{wang2019global}. Since $g\equiv 0$, the problem \eqref{problem_nonsmooth} reduces to
\begin{equation}\label{problem-smooth-manifold-split}
\begin{split}
        &\min_{x,y}\ f(x) + I_{\M}(y) \\
        &\st, \ x=y,
\end{split}
\end{equation}
where $I_{\M}$ is the indicator function of manifold $\M$. The ADMM for solving \eqref{problem-smooth-manifold-split} iterates as follows:
\begin{equation}\label{admm_nonconvex}
\begin{split}
        x^{k+1} := & \ \argmin_x f(x)+\langle \lambda^{k},x-y^k\rangle+\frac{\rho}{2}\|x - y^{k} \|_{2}^2 \\
        y^{k+1} := & \ \argmin_{y\in\M}\langle \lambda^{k},x^{k+1}-y\rangle+ \frac{\rho}{2} \| x^{k+1}-y\|_{2}^2 \\
        \lambda^{k+1} := & \ \lambda^{k} + \rho(x^{k+1} - y^{k+1}).
\end{split}
\end{equation}
The convergence of \eqref{admm_nonconvex} is established in \cite{wang2019global} under the assumption that $f$ is Lipschitz differentiable. Note that the convergence only applies when $g\equiv 0$. The ADMM studied in \cite{wang2019global} does not apply to \eqref{problem_nonsmooth} when the nonsmooth function $g$ presents.

Another ADMM was proposed in \cite{lu2018nonconvex} for solving a particular smooth Riemannian optimization problem: the sparse spectral clustering. This problem can be cast below.
\begin{equation}\label{ssc}
\begin{split}
    \min_{P,U} & \ \langle L,U U^\top \rangle + g(P), \\
    \st & \ P=U U^\top,U^\top U=I,
\end{split}
\end{equation}
where $L$ is a given matrix, $g$ is a smooth function that promotes the sparsity of $UU^\top$. The ADMM for solving \eqref{ssc} iterates as follows.
\begin{equation}\label{ssc_admm}
\begin{split}
        U^{k+1} := & \ \argmin_{U^\top U=I} \langle L,UU^\top\rangle + \langle\Lambda^k,P^k-UU^\top\rangle + \frac{\rho}{2}\|P^k-U U^\top\|_F^2 \\
        P^{k+1} := & \ \argmin_P g(P) + \langle\Lambda^k,P-U^{k+1}(U^{k+1})^\top\rangle+\frac{\rho}{2}\|P - U^{k+1} (U^{k+1})^\top \|_F^2\\
        \Lambda^{k+1} := & \ \Lambda^{k} + \rho(P^{k+1} - U^{k+1} (U^{k+1})^\top). 
\end{split}
\end{equation}
Note that the ADMM in \cite{lu2018nonconvex} requires the smoothness on the objective function as well, and it does not apply to the case where the objective function is nonsmooth. Zhang \etal \cite{zhang2019primal} proposed a proximal ADMM which solves the following problem:
\begin{equation}\label{problem-zhang-ma-zhang}
\begin{split}
    \min & \ f(x_1,\ldots,x_N) + \sum_{i=1}^{N-1} g_i(x_i) \\
    \st  & \ x_N = b - \sum_{i=1}^{N-1} A_i x_i \\
         & \ x_i \in \M_i \cap \mathcal{X}_i, i=1,\ldots,N-1,
\end{split}
\end{equation}
where $f$ is a smooth function, $g_i$ is a nonsmooth function, $\M_i$ is a Riemannian manifold, and $\mathcal{X}_i$ is a convex set. The authors of \cite{zhang2019primal} established the iteration complexity of the proposed proximal ADMM for obtaining an $\epsilon$-stationary point of \eqref{problem-zhang-ma-zhang}. A notable requirement in \eqref{problem-zhang-ma-zhang} is that the last block variable (i.e., $x_N$) must not appear in the nonsmooth part of the objective, nor be subject to manifold constraints. This is in sharp contrast to problem \eqref{problem_nonsmooth_splitting}, where one block variable is associated with the manifold constraint, and the other block variable is associate with the nonsmooth part of the objective.

Other than ADMM-type algorithms, there also exist some other algorithms for solving \eqref{problem_nonsmooth}. Here we briefly discuss two of them: Riemannian subgradient method and Riemannian proximal gradient method. Because the objective function of \eqref{problem_nonsmooth} is nonsmooth, it is a natural idea to use Riemannian subgradient method \cite{ferreira1998subgradient,borckmans2014riemannian,grohs2016varepsilon,hosseini2015convergence,hosseini2017riemannian,hosseini2018line,grohs2016nonsmooth,li2019weakly}
to solve it. The Riemannian subgradient method for solving \eqref{problem_nonsmooth} updates the iterate by
\[
x^{k+1}=\retr_{x^{k}}(- \eta_k v^k),
\]
where $v^k$ is a Riemannian subgradient of $F$ at $\M$, $\eta_k>0$ is a stepsize, and $\retr$ denotes the retraction operation. Convergence of this method is established in \cite{ferreira1998subgradient} when $F$ is geodesically convex, and iteration complexity is analyzed in \cite{li2019weakly} when $F$ is weakly convex over the Stiefel manifold. Another representative algorithm for solving \eqref{problem_nonsmooth} is the manifold proximal gradient method (ManPG), which was proposed recently by Chen \etal \cite{chen2020proximal}. A typical iteration of ManPG is given below:
\begin{equation}\label{manpg}
\begin{split}
        v^k  & :=\argmin_{v\in \T_{x^{k}}\M} \langle\grad f(x^k), v\rangle + \frac{1}{2 t}\|v\|^2+g(A(x^k + v))\\
        x^{k+1} & := \retr_{x^k}(\alpha v^k),
\end{split}
\end{equation}
where $t>0$ and $\alpha>0$ are stepsizes, $\T_x\M$ denotes the tangent space of $\M$ at $x$, and $\grad f$ denotes the Riemannian gradient of $f$. Chen \etal \cite{chen2020proximal} analyzed the iteration complexity of ManPG for obtaining an $\epsilon$-stationary point of \eqref{problem_nonsmooth}. Moreover, Chen \etal \cite{chen2020proximal} suggested to solve the subproblem for determining $v_k$ in \eqref{manpg} by a semi-smooth Newton method \cite{chen2020proximal,xiao2018regularized}. Huang and Wei extended ManPG to more general manifold \cite{Huang-Wei-RPG} and they also designed an accelerated ManPG \cite{huang2019extension} that demonstrates superior numerical behaviour than the original ManPG. In a more recent work, Zhou \etal \cite{zhou2022semismooth} proposed an augmented Lagrangian method that solves the manifold constrained problems with nonsmooth objective. Note that similar to ManPG, the algorithms in \cite{Huang-Wei-RPG,huang2019extension,zhou2022semismooth} all require solving a relatively difficult subproblem which needs to be solved by semi-smooth Newton algorithm. In this paper we do not need to deal with difficult subproblems -- all steps of our algorithms are explicit and easy-to-compute. 

In Table \ref{table_summary}, we give a detailed comparison of our algorithm with some existing methods. From Table \ref{table_summary} we see that our method allows nonsmooth objective, simple update, and has convergence rate analysis. This combination has been missing in the literature. More specifically, ADMM \cite{wang2019global} was only analyzed for smooth objective. SOC \cite{lai2014splitting} and MADMM \cite{kovnatsky-madmm-2016} lack convergence analysis. ADMM-NSSC \cite{lu2018nonconvex} was designed for a specific application, and the convergence rate was only given for the difference of consecutive iterates, which is not enough to measure the optimality. prox-ADMM \cite{zhang2019primal} requires a strong assumption on the problem: the last block variable cannot appear in the nonsmooth objective. In terms of the per-iteration complexity, it is clear that RADMM admits simple updates: retraction for the manifold and proximal mapping for the nonsmooth function. SOC, MADMM and prox-ADMM all have higher per-iteration complexity than RADMM. 
Although the iteration complexity of RADMM is seemingly higher than some other algorithms, we point out that different papers use different notions of $\epsilon$-stationarity, and these notions may not be equivalent to each other. Therefore, the iteration complexity results in the last column of Table \ref{table_summary} are not comparable.

\begin{table}[ht]
\centering
\begin{tabular}{c || p{0.6in} | p{2in} | c } 
 \hline
 Algorithm & Problem & Per iteration & Convergence (rate) \\
 \hline\hline 
 ADMM \cite{wang2019global} & \eqref{problem_nonsmooth} (with $g=0$) & projection onto manifold, proximal mapping of smooth function  & $\mathcal{O}(1/\epsilon^2)$ \\ 
 \hline
 SOC \cite{lai2014splitting} & \eqref{problem_nonsmooth_soc_splitting} & projection onto manifold, difficult Euclidean subproblem & NA \\
 \hline
 MADMM \cite{kovnatsky-madmm-2016} & \eqref{problem_nonsmooth_splitting} & difficult manifold subproblem, proximal mapping of nonsmooth function & NA \\
 \hline
 ADMM-NSSC \cite{lu2018nonconvex} & \eqref{ssc} & projection onto manifold, proximal mapping of nonsmooth function & Asymptotic* \\
 \hline
 prox-ADMM \cite{zhang2019primal} & \eqref{problem-zhang-ma-zhang} & difficult manifold subproblem & $\mathcal{O}(1/\epsilon^2)$ \\
 \hline
 RADMM (this work) & \eqref{problem_nonsmooth_splitting} & retraction, proximal mapping of nonsmooth function & $\mathcal{O}(1/\epsilon^4)$ \\
 \hline
\end{tabular}
\vskip 0.02in
\footnotesize{*Require strong assumption on the sequence to achieve rate of convergence.}
\vskip 0.1in
\caption{Summary of the convergence results for different ADMM-based algorithms to solve \eqref{problem_nonsmooth}, \eqref{problem_nonsmooth_splitting} or \eqref{problem_nonsmooth_soc_splitting} (Note that these three problems are equivalent).}
\label{table_summary}
\end{table}

\textbf{Our contributions.} In this paper, we propose a Riemannian ADMM (RADMM) for solving \eqref{problem_nonsmooth_splitting} based on a Moreau envelope smoothing technique. Our RADMM for solving \eqref{problem_nonsmooth_splitting} contains easily computable steps in each iteration. We analyze the iteration complexity of our RADMM for obtaining an $\epsilon$-stationary point to \eqref{problem_nonsmooth_splitting} under mild assumptions. Existing ADMM for solving nonconvex problems either does not allow nonconvex constraint set, or does not allow nonsmooth objective function. In contrast, our algorithm (and its convergence result) is established for problems with simultaneous nonsmooth objective and manifold constraint. Numerical results of the proposed algorithm for solving sparse principal component analysis and dual principal component pursuit are reported, which demonstrate its superiority over existing methods.

\textbf{Organizations.} The rest of this paper is organized as follows. We propose our RADMM in Section \ref{sec_algo}, whose iteration complexity is analyzed in Section \ref{sec_convergence}. Section \ref{sec_numeric} is devoted to applications and numerical experiments. We draw some concluding remarks in Section \ref{sec_conclusion}.

\textbf{Notation.} We use $\|\cdot\|$ to denote the common Euclidean norm for vectors. For matrices, we use $\|\cdot\|_2$ to denote the operator-2 norm, i.e. $\|A\|_2:=\max_{\|x\|=1} \|A x\|$. Note that we use $\|\cdot\|_1$ to denote the $\ell_1$ matrix norm, i.e. $\|X\|_1=\sum_{ij}|X_{ij}|$. $\Tr(X)$ denotes the trace of matrix $X$.

\section{A Riemannian ADMM} \label{sec_algo}

In this section, we introduce our Riemannian ADMM algorithm. We first review some basics of Riemannian optimization.

\subsection{Basics on Riemannian optimization}

Let $\M\subset\R^n$ be a differentiable embedded submanifold. We have the following definition for the tangent space.
\begin{definition}[Tangent space]
    Consider a manifold $\M$ embedded in a Euclidean space. For any $x\in\M$, the tangent space $\T_x\M$ at $x$ is a linear subspace that consists of the derivatives of all differentiable curves on $\M$ passing through $x$:
    \begin{equation}\label{eq_tangent_space}
        \T_x\M=\{\gamma^{\prime}(0): \gamma(0)=x, \gamma([-\delta, \delta]) \subset \mathcal{M}\text { for some } \delta>0, \gamma \text { is differentiable}\}.
    \end{equation}
\end{definition}

The manifold $\M$ is a Riemannian manifold if it is equipped with an inner product on the tangent space, $\langle \cdot, \cdot \rangle _x : \T_x\M \times \T_x\M \rightarrow \R$, that varies smoothly on $\M$. As an example, consider the Stiefel manifold $\M= \St(n, p):=\{X\in\R^{n\times p}: X^\top X=I_p\}$. The tangent space of $\St(n, p)$ is given by $\T_X\M=\{Y\in\R^{n\times p}: X^\top Y+Y^\top X=0\}$. It is easy to verify that the projection onto the tangent space of $\St(n, p)$ is $\proj_{\T_X\M}(Y) = (I-X X^\top)Y + X\operatorname{skew}(X^\top Y)$, where $\operatorname{skew}(A):=(A-A^\top)/2$. We refer to the monographs \cite{absil2009optimization,boumal2020introduction} for more examples. We now introduce the concept of a Riemannian gradient.
\begin{definition}[Riemannian Gradient]\label{def_riemann_grad}
    Suppose $f$ is a smooth function on $\M$. The Riemannian gradient $\grad f(x)$ is a vector in $\T_x\M$ satisfying $\left.\frac{d(f(\gamma(t)))}{d t}\right|_{t=0}=\langle v, \grad f(x)\rangle_{x}$ for any $v\in \T_x\M$, where $\gamma(t)$ is a curve as described in \eqref{eq_tangent_space}.
\end{definition}

Another useful concept is the retraction.
\begin{definition}[Retraction]\label{def_retraction}
    A retraction mapping $\retr_x$ is a smooth mapping from $\T_x\M$ to $\M$ (not necessary injective or surjective) such that: $\retr_x(0)=x$, where $0$ is the zero element of $\T_x\M$, and the differential of $\retr_x$ at $0$ is an identity mapping, i.e., $\left.\frac{d \retr_x(t\eta)}{d t}\right|_{t=0}=\eta$, $\forall \eta\in \T_x\M$. In particular, the exponential mapping $\Exp_x$ is a retraction that generates geodesics.
\end{definition}
In the theoretical analysis of our algorithm, we always assume that the retraction is injective from $\T_x\M$ to $\M$ for any point $x\in\M$, thus the existence of the inverse of the retraction function $\retr_{x}^{-1}$ is guaranteed. For example, when $\M$ is complete, the exponential mapping $\Exp_x$ (which is a special example of retraction) is always defined for every $\xi\in \T_x\M$, and the inverse of the exponential mapping $\Exp_{x}^{-1}(y)\in \T_x\M$ (which is called the logarithm mapping), is always well defined for any $x,y\in\M$.

Throughout this paper, we consider the Riemannian metric on $\M$ that is induced from the Euclidean inner product; i.e., for any $\xi,\eta \in \T_x\M$, we have $\langle\xi,\eta\rangle_x = \Tr(\xi^\top\eta)$.
The Euclidean gradient of a smooth function $f$ is denoted as $\nabla f$ and the Riemannian gradient of $f$ is denoted as $\grad\,f$. Note that by our choice of the Riemannian metric, we have $\grad\,f(x)=\Proj_{\T_x\M}\nabla f(x)$, the orthogonal projection of $\nabla f(x)$ onto the tangent space.

\subsection{Our Riemannian ADMM}
Now we are ready to introduce our RADMM algorithm. Our RADMM for solving \eqref{problem_nonsmooth_splitting} is based on the Moreau envelope smoothing technique. In particular, we consider smoothing the function $g$ in \eqref{problem_nonsmooth_splitting} by adding a quadratic proximal term, which leads to:

\begin{equation}\label{Moreau_envelope-3block}
\begin{split}
        \min_{x,y,z} & \ f(x)+ g(y) + \frac{1}{2\gamma}\|y-z\|^2 \\
        \st & \ A x=z,\ x\in\M,
\end{split}
\end{equation}
where $\gamma>0$ is a parameter. Equivalently, \eqref{Moreau_envelope-3block} can also be rewritten as
\begin{equation}\label{Moreau_envelope}
\begin{split}
        \min_{x,z} & \ f(x)+g_{\gamma}(z) \\
        \st & \ A x=z,\ x\in\M,
\end{split}
\end{equation}
where $g_{\gamma}(z)=\min_{y}\left\{ g(y) + \frac{1}{2\gamma}\|y-z\|^2 \right\}$ is the Moreau envelope of $g$, and it is known that $g_\gamma$ is a smooth function when $g$ is convex.

We need to point out that the idea of Moreau envelope smoothing has been proposed in for solving the following problem in Euclidean space:
\begin{equation}\label{MEAL_problem}
        \min_{x}  \ f(x) + g(x), \
        \st, \ A x=b,
\end{equation}
where $f$ is smooth and $g$ is weakly convex with easily computable proximal mapping. In particular, the authors of \cite{zeng2022moreau} proposed an augmented Lagrangian method for solving the Moreau envelope smoothed problem of \eqref{MEAL_problem}, inspired by the earlier work on smoothed augmented Lagrangian methods for nonconvex optimization in \cite{zhang2020proximal,zhang2022global}.

We apply the same idea of Moreau envelope smoothing and design our RADMM algorithm. Define the augmented Lagrangian function of \eqref{Moreau_envelope} as:
\begin{equation}\label{aug-Lag-func-MY-smooth}
\mathcal{L}_{\rho,\gamma} (x,z;\lambda)=f(x)+g_{\gamma}(z)+\langle\lambda, A x-z\rangle+\frac{\rho}{2}\|A x-z\|^2. 
\end{equation}
A direct application of ADMM for solving \eqref{Moreau_envelope} yields the following updating scheme:
\begin{equation}\label{R_admm_Moreau}
\begin{split}
    x^{k+1} := & \ \argmin_{x\in\M}\ \mathcal{L}_{\rho,\gamma} (x,z^k;\lambda^k) \\
    z^{k+1} := & \ \argmin_{z}\ \mathcal{L}_{\rho,\gamma} (x^{k+1},z;\lambda^k) \\
    \lambda^{k+1} := & \ \lambda^{k} + \rho (A x^{k+1}-z^{k+1}).
\end{split}
\end{equation}
Now since the $x$-subproblem in \eqref{R_admm_Moreau} is usually not easy to solve, we propose to replace it with a Riemannian gradient step, and this leads to our RADMM, which iterates as follows:
\begin{equation}\label{R_admm_Moreau-linearize}
\begin{split}
    x^{k+1} := & \ \retr_{x^k}(-\eta_k\grad_x\mathcal{L}_{\rho,\gamma} (x^k,z^k;\lambda^k)) \\
    z^{k+1} := & \ \argmin_{z}\ \mathcal{L}_{\rho,\gamma} (x^{k+1},z;\lambda^k) \\
    \lambda^{k+1} := & \ \lambda^{k} + \rho (A x^{k+1}-z^{k+1}),
\end{split}
\end{equation}
where $\eta_k>0$ is a stepsize, and $\grad_x \mathcal{L}_{\rho,\gamma}$ denotes the Riemannian gradient of $\mathcal{L}_{\rho,\gamma}$ with respect to $x$. The remaining thing is to discuss how to solve the $z$-subproblem in \eqref{R_admm_Moreau-linearize}. It turns out that it is closely related to the proximal mapping of function $g$, and can be easily solved as long as the proximal mapping of $g$ can be easily evaluated, as shown in the following lemma.

\begin{lemma}\label{lemma_z_update}
    The solution of the $z$-subproblem in \eqref{R_admm_Moreau-linearize} is given by
    \begin{equation}\label{z_update}
        z^{k+1} := \frac{\gamma}{1+\gamma\rho}\left(\frac{1}{\gamma}y^{k+1}+\lambda^{k}+\rho A x^{k+1}\right),
    \end{equation}
    where
    \begin{equation}\label{y_update}
        y^{k+1} := \mathrm{prox}_{\frac{1+\rho\gamma}{\rho}g}\left(A x^{k+1}+\frac{1}{\rho}\lambda^{k}\right),
    \end{equation}
    where $\mathrm{prox}_h$ denotes the proximal mapping of function $h$, which is defined as
    \[
        \mathrm{prox}_h(u) = \argmin_v \ h(v) + \frac{1}{2}\|u-v\|_2^2.
    \]
\end{lemma}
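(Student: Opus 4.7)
The plan is to expand out the definition of the Moreau envelope inside the $z$-subproblem, turn it into a joint minimization in $(y,z)$, eliminate $z$ to expose a proximal mapping in $y$, and then read off $z^{k+1}$. No tools beyond the definitions on display are needed; the main work is bookkeeping.

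First I would use the definition $g_\gamma(z) = \min_y\{g(y)+\frac{1}{2\gamma}\|y-z\|^2\}$ to rewrite the $z$-subproblem as
\begin{equation*}
(z^{k+1},y^{k+1}) \in \argmin_{z,y}\ g(y) + \frac{1}{2\gamma}\|y-z\|^2 + \langle \lambda^k, Ax^{k+1} - z\rangle + \frac{\rho}{2}\|Ax^{k+1} - z\|^2.
\end{equation*}
Completing the square in the last two terms replaces them by $\frac{\rho}{2}\|z-u\|^2$ up to a constant, where
\begin{equation*}
u := Ax^{k+1} + \tfrac{1}{\rho}\lambda^k.
\end{equation*}
So the joint problem becomes $\min_{y,z}\ g(y) + \frac{1}{2\gamma}\|y-z\|^2 + \frac{\rho}{2}\|z-u\|^2$, which is now a clean sum of convex quadratics in $z$.

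Next I would partially minimize in $z$ with $y$ fixed. Setting the gradient to zero yields the closed-form convex combination
\begin{equation*}
z^\star(y) = \frac{\tfrac{1}{\gamma}y + \rho u}{\tfrac{1}{\gamma}+\rho} = \frac{y + \gamma\rho\, u}{1+\gamma\rho},
\end{equation*}
which already has the shape of \eqref{z_update} once we identify $y$ with $y^{k+1}$ and substitute back $u$. Plugging $z^\star(y)$ into the quadratic part is a short algebraic exercise: the two quadratic terms collapse, after using the identity $\gamma\rho^2 + \rho = \rho(1+\gamma\rho)$, to the single term $\frac{\rho}{2(1+\gamma\rho)}\|y-u\|^2$.

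Finally, the reduced problem over $y$ is
\begin{equation*}
\min_y\ g(y) + \frac{\rho}{2(1+\gamma\rho)}\|y-u\|^2,
\end{equation*}
whose minimizer is by definition $\prox_{\frac{1+\gamma\rho}{\rho}g}(u)$, matching \eqref{y_update}. Substituting this $y^{k+1}$ into the formula for $z^\star(y)$ and rewriting $\rho u = \lambda^k + \rho Ax^{k+1}$ recovers \eqref{z_update}. The only step that requires any care is verifying that the coefficient in the prox matches $\frac{1+\gamma\rho}{\rho}$ rather than $\frac{\gamma}{1+\gamma\rho}$, which comes out correctly from the simplification above. Convexity of $g$ ensures the $y$-minimizer is unique, so the whole derivation gives a legitimate closed-form rule rather than merely a stationarity condition.
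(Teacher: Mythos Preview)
Your proposal is correct and follows essentially the same approach as the paper: both rewrite the $z$-subproblem as the joint minimization in $(y,z)$ and then solve for one variable in terms of the other. The only cosmetic difference is that the paper writes down the first-order optimality conditions in $z$ and $y$ directly and substitutes, whereas you first complete the square and perform an explicit partial minimization in $z$; the resulting formulas are identical.
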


\begin{proof}{Proof.}
    The $z$-subproblem in \eqref{R_admm_Moreau-linearize} can be equivalently rewritten as
    \begin{equation}\label{z_step}
        (z^{k+1},y^{k+1}) := \argmin_{z,y}\ g(y)+\frac{1}{2\gamma}\|y-z\|^2 + \langle\lambda^{k}, A x^{k+1}-z\rangle+\frac{\rho}{2}\|A x^{k+1}-z\|^2.
    \end{equation}
    The optimality conditions of \eqref{z_step} are given by
    \begin{subequations}
    \begin{align}
        0 = & \frac{1}{\gamma}(z^{k+1}-y^{k+1}) - \lambda^k + \rho(z^{k+1}-Ax^{k+1}),\label{z_step-optcond-1} \\
        0 \in & \ \partial g(y^{k+1}) + \frac{1}{\gamma}(y^{k+1}-z^{k+1}). \label{z_step-optcond-2}
    \end{align}
    \end{subequations}
    It is easy to see that \eqref{z_step-optcond-1} immediately yields \eqref{z_update}. Plugging \eqref{z_update} into \eqref{z_step-optcond-2} gives
    \[
        0 \in  \ \frac{1+\gamma\rho}{\rho}\partial g(y^{k+1}) + y^{k+1} - \left(Ax^{k+1}+\frac{\lambda^k}{\rho}\right),
    \]
    which implies
    \[
        y^{k+1} = \argmin_y \ \frac{1+\gamma\rho}{\rho} g(y) + \frac{1}{2}\left\|y-\left(Ax^{k+1}+\frac{\lambda^k}{\rho}\right)\right\|_2^2,
    \]
    i.e., \eqref{y_update} holds.
\end{proof}

Our RADMM for solving \eqref{problem_nonsmooth_splitting} can therefore be summarized as in Algorithm \ref{R_admm_Moreau_grad_step}. We can see that all the steps can be easily computed and implemented.

\begin{algorithm}[ht]
   \caption{A Riemannian ADMM}
   \label{R_admm_Moreau_grad_step}
\begin{algorithmic}[1]
   \STATE {\bfseries Input:} $(x^{0},z^{0};\lambda^{0})$, stepsize $\eta_k>0$, parameters $\rho>0$ and $\gamma>0$.
   \FOR{$k=0,1,2,...$}
   \STATE Update
    $x^{k+1} := \retr_{x^k}(-\eta_k\grad_x\mathcal{L}_{\rho,\gamma} (x^k,z^k;\lambda^k))$.
   \STATE Update
    $y^{k+1} := \mathrm{prox}_{\frac{1+\rho\gamma}{\rho}g}\left(A x^{k+1}+\frac{1}{\rho}\lambda^{k}\right)$.
   \STATE Update
    $z^{k+1} := \frac{\gamma}{1+\gamma\rho}\left(\frac{1}{\gamma}y^{k+1}+\lambda^{k}+\rho A x^{k+1}\right)$.
   \STATE Update
    $\lambda^{k+1} := \lambda^{k} + \rho (A x^{k+1}-z^{k+1})$
   \ENDFOR
\end{algorithmic}
\end{algorithm}

\section{Convergence Analysis}\label{sec_convergence}

In this section, we analyze the iteration complexity of Algorithm \ref{R_admm_Moreau_grad_step} for obtaining an $\epsilon$-stationary point of \eqref{problem_nonsmooth_splitting}. The following assumption is needed in the analysis.

\begin{assumption}\label{assum}
    We assume $f$, $g$ and $\M$ in \eqref{problem_nonsmooth_splitting} satisfy the following conditions.
    \begin{enumerate}
        \item $\M\subset\mathbb{R}^n$ is a compact and complete Riemannian manifold embedded in Euclidean space $\br^n$ with diameter $D$;

        \item $\nabla f$ is Lipschitz continuous with Lipschitz constant $L$ in the ambient space $\br^n$;

        \item $g$ is convex and Lipschitz continuous with Lipschitz constant $L_g$ in the ambient space $\br^m$.
    \end{enumerate}
\end{assumption}
Also since $\M$ is compact and $\nabla f$ is continuous, we are able to denote the maximum of the norm of $f$ as a constant $M$, i.e.,
\begin{equation}\label{grad_upper_bound}
    \|\nabla f(x)\|\leq M,\ \forall x\in\M.
\end{equation}

Now we proceed to study the optimality of the problem \eqref{problem_nonsmooth_splitting}. First, we note that the first-order optimality conditions of \eqref{problem_nonsmooth_splitting} are given by (see, e.g., \cite{Yang-manifold-optimality-2014}):
\begin{equation}
\begin{split}
    0 &= \grad_x \mathcal{L}(x^*,y^*,\lambda^*) =\proj_{\T_{x^*}\M}\left(\nabla f(x^*) + A^\top\lambda^* \right), \\ 
    0 &\in \partial_y \mathcal{L}(x^*,y^*,\lambda^*)=\partial g(y^*) - \lambda^*, \\
    0 &= A x^* - y^*,\\ x^* & \in \M,
\end{split}
\end{equation}
where the Lagrangian function of \eqref{problem_nonsmooth_splitting} is defined as
\[
\mathcal{L}(x,y,\lambda):= f(x) + g(y)+\langle\lambda, Ax - y\rangle.
\]
Based on this, we can define the $\epsilon$-stationary point of \eqref{problem_nonsmooth_splitting} as follows.
\begin{definition}\label{definition_eps_stationary}
    For $(x,y,\lambda)$ with $x\in\M$, denote
    $$
        \partial\mathcal{L}(x,y,\lambda):=\begin{bmatrix}
        \proj_{\T_{x}\M}\left(\nabla f(x) + A^\top\lambda \right) \\
        \partial g(y) - \lambda \\
        A x-y
        \end{bmatrix}.
    $$
    Then $(\bar{x},\bar{y},\bar{\lambda})$ with $\bar{x}\in\M$ is called an $\epsilon$-stationary point of \eqref{problem_nonsmooth_splitting} if there exists $ G\in\partial\mathcal{L}(\bar{x},\bar{y},\bar{\lambda})$ such that $\|G\|_2\leq\epsilon$.
\end{definition}

Before we present our main convergence results, we need the following lemmas. The first one is a brief recap of the properties of Moreau envelope (see e.g. \cite{beck2017first} Chapter 6).
\begin{lemma}[Properties of Moreau envelope]\label{lemma_moreau_envelope}
    Suppose $g$ is a $L_g$ Lipschitz continuous and convex function. The Moreau envelope $g_{\gamma}(z):=\min_{y} g(y) + \frac{1}{2\gamma}\|z - y\|^2$ satisfies the following:
    \begin{enumerate}
        \item $0\leq g(z) - g_{\gamma}(z) \leq \gamma L_g^2$;
        \item $\nabla g_{\gamma}(z) = \frac{1}{\gamma}(z - \prox_{\gamma g}(z))$; 
        \item $g_\gamma(z)$ is $L_g$-Lipschitz continuous;
        \item $g_{\gamma}(z)$ is $1/\gamma$ Lipschitz smooth, i.e. $\nabla g_{\gamma}(z)$ is Lipschitz continuous with parameter $1/\gamma$.
    \end{enumerate}
\end{lemma}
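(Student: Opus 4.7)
The plan is to give a self-contained verification of the four standard properties by working with the unique minimizer $y^*(z) := \prox_{\gamma g}(z)$, which exists and is unique because the inner objective $g(y)+\tfrac{1}{2\gamma}\|y-z\|^2$ is strongly convex in $y$.

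For property (1), the lower bound $g_{\gamma}(z)\leq g(z)$ is immediate by plugging $y=z$ into the defining minimum. For the upper bound I would use the $L_g$-Lipschitz continuity of $g$ to write, for every $y$,
\[
g(y)+\frac{1}{2\gamma}\|y-z\|^2 \;\geq\; g(z)-L_g\|y-z\|+\frac{1}{2\gamma}\|y-z\|^2,
\]
then minimize the right-hand side in $t=\|y-z\|\geq 0$. The optimum at $t=\gamma L_g$ yields $g_\gamma(z)\geq g(z)-\gamma L_g^2/2$, which is stronger than the stated $\gamma L_g^2$ bound.

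For property (2), the first-order optimality of $y^*(z)$ in the inner problem reads $\frac{1}{\gamma}(z-y^*(z))\in\partial g(y^*(z))$. Since $\phi(z,y):=g(y)+\tfrac{1}{2\gamma}\|z-y\|^2$ is smooth in $z$ for fixed $y$ and its unique minimizer $y^*(z)$ is continuous, a Danskin-type argument gives $\nabla g_\gamma(z)=\nabla_z \phi(z,y^*(z))=\tfrac{1}{\gamma}(z-y^*(z))$. Property (3) then follows as a one-line corollary: $\nabla g_\gamma(z)$ is a subgradient of $g$ at $y^*(z)$, and every subgradient of an $L_g$-Lipschitz convex function has norm at most $L_g$, so $\|\nabla g_\gamma(z)\|\leq L_g$ and $g_\gamma$ is $L_g$-Lipschitz.

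The subtlest piece is property (4). A naive Lipschitz estimate on $\nabla g_\gamma(z)=\tfrac{1}{\gamma}(z-y^*(z))$ combined with the (ordinary) nonexpansiveness of the proximal map only yields a $2/\gamma$ constant. To recover the sharp $1/\gamma$ constant, I would invoke the fact that for convex $g$ the proximal operator $\prox_{\gamma g}$ is firmly nonexpansive, which implies that the residual map $I-\prox_{\gamma g}$ is also firmly nonexpansive, and in particular $1$-Lipschitz. Then
\[
\|\nabla g_\gamma(z_1)-\nabla g_\gamma(z_2)\|=\frac{1}{\gamma}\bigl\|(I-\prox_{\gamma g})(z_1)-(I-\prox_{\gamma g})(z_2)\bigr\|\leq \frac{1}{\gamma}\|z_1-z_2\|,
\]
which is exactly the claim.
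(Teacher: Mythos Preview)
Your proof is correct. The paper itself does not prove this lemma at all: it simply states the four properties and cites Chapter~6 of Beck's \emph{First-Order Methods in Optimization} as the source. So there is no ``paper's own proof'' to compare against; you have supplied a correct self-contained argument where the authors chose to defer to the literature.

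A couple of minor remarks on your write-up. In property~(1) your bound $g(z)-g_\gamma(z)\le \gamma L_g^2/2$ is indeed the sharp one; the paper's stated bound $\gamma L_g^2$ is looser but sufficient for its later use in Lemma~\ref{lemma_lower_bdd}. In property~(4) your observation that firm nonexpansiveness of $\prox_{\gamma g}$ (equivalently, of $I-\prox_{\gamma g}$) is what delivers the constant $1/\gamma$ rather than $2/\gamma$ is exactly the right point, and worth keeping explicit since the sharp constant is used repeatedly in the paper (e.g.\ Lemma~\ref{bound_dual_by_primal} and Lemma~\ref{lemma_lower_bdd}).
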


Now we proceed to bound the difference of dual sequence by the primal sequence. 
\begin{lemma}[Bound dual by primal]\label{bound_dual_by_primal}
    For the updates of Algorithm \ref{R_admm_Moreau_grad_step}, we have:
    \begin{equation}\label{bound_dual_by_primal-eq}
        \|\lambda^{k+1}-\lambda^{k}\|\leq\frac{1}{\gamma }\|z^{k+1}-z^{k}\|.
    \end{equation}
\end{lemma}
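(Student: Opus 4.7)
The plan is to identify the dual variable $\lambda^{k+1}$ with the gradient of the Moreau envelope evaluated at $z^{k+1}$, and then invoke the $1/\gamma$-Lipschitz smoothness of $g_\gamma$ stated in part 4 of the Moreau envelope lemma.

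First, I would write down the first-order optimality condition for the $z$-subproblem in \eqref{R_admm_Moreau-linearize}. Since $g_\gamma$ is smooth, this subproblem is just an unconstrained smooth minimization in $z$, and the optimality condition reads
\begin{equation*}
0 = \nabla g_\gamma(z^{k+1}) - \lambda^k - \rho(Ax^{k+1} - z^{k+1}).
\end{equation*}
Rearranging and using the dual update $\lambda^{k+1} = \lambda^k + \rho(Ax^{k+1} - z^{k+1})$, I obtain the key identity
\begin{equation*}
\lambda^{k+1} = \nabla g_\gamma(z^{k+1}).
\end{equation*}

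Next, applying the same reasoning to the previous iteration yields $\lambda^k = \nabla g_\gamma(z^k)$ (for $k \geq 1$; the base case $k=0$ can be handled by assuming $\lambda^0$ is initialized consistently, which is standard). Subtracting and using part 4 of \Cref{lemma_moreau_envelope}, which states that $\nabla g_\gamma$ is $1/\gamma$-Lipschitz continuous, gives
\begin{equation*}
\|\lambda^{k+1} - \lambda^k\| = \|\nabla g_\gamma(z^{k+1}) - \nabla g_\gamma(z^k)\| \leq \frac{1}{\gamma}\|z^{k+1} - z^k\|,
\end{equation*}
which is precisely \eqref{bound_dual_by_primal-eq}.

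There is no real obstacle here; the only subtlety is noticing the identification $\lambda^{k+1} = \nabla g_\gamma(z^{k+1})$, which is exactly what the Moreau envelope smoothing is designed to buy us. This is the whole reason for introducing the smoothing in the first place: without it, $\lambda$ would only be bounded to lie in a subdifferential, and one could not control $\|\lambda^{k+1} - \lambda^k\|$ by $\|z^{k+1} - z^k\|$ through a Lipschitz argument. The initialization caveat for $k=0$ is a minor bookkeeping issue that can be absorbed into the choice of $\lambda^0$.
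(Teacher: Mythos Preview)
Your proof is correct and follows exactly the same approach as the paper: derive the identity $\lambda^{k+1} = \nabla g_\gamma(z^{k+1})$ from the optimality condition of the $z$-subproblem combined with the dual update, and then apply the $1/\gamma$-Lipschitz continuity of $\nabla g_\gamma$ from \Cref{lemma_moreau_envelope}. The paper's proof is slightly terser (it does not explicitly write out the subtraction step or mention the $k=0$ initialization), but the argument is identical.
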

\begin{proof}{Proof.}
Note that the optimality conditions of the $z$-subproblem in \eqref{R_admm_Moreau-linearize} is given by:
\begin{equation}\label{Optimality_update}
    \nabla g_{\gamma }(z^{k+1}) - \lambda^{k} + \rho(z^{k+1}-A x^{k+1}) = 0,
\end{equation}
which, together with the $\lambda$ update in \eqref{R_admm_Moreau-linearize}, yields
\begin{equation}\label{grad_g_eq_lambda}
    \nabla g_{\gamma }(z^{k+1}) = \lambda^{k+1}.
\end{equation}
The desired result \eqref{bound_dual_by_primal-eq} follows from the fact that $g_{\gamma }$ is $1/\gamma$-Lipschitz smooth as in Lemma \ref{lemma_moreau_envelope}.
\end{proof}

We now provide the smoothness notion over manifolds, which is also known as Lipschitz-type gradient for pullbacks.
\begin{definition}[\cite{boumal2019global}]\label{def_geodesic_smoothness}
    Function $f$ is called $L_{1}$-retraction smooth on complete Riemannian manifold $\M$ if $\forall x\in\M$ and $\forall v\in\T_{x}\M$, it holds that
    \begin{equation}\label{geodesic-smooth-inequality}
        f(\retr_{x}(v))\leq f(x) + \langle \grad f(x), v  \rangle + \frac{L_{1}}{2}\|v\|^2.
    \end{equation}
\end{definition}

The following lemma is from \cite{boumal2019global}, which bridges the smoothness on the manifold with the smoothness in the ambient Euclidean space.

\begin{lemma}[\cite{boumal2019global}]\label{lemma_geodesic_smoothness}
    Suppose $\M\in\mathbb{E}$ is a compact and complete Riemannian manifold embedded in Euclidean space $\mathbb{E}$ and $f$ is $L$-Lipschitz smooth in $\mathbb{E}$, then $f$ is also $L_{1}$-retraction smooth, where $L_{1}$ is determined by the manifold $\M$ and $f$.
    Specifically, it can be shown~(see \cite{boumal2019global}) that there exist positive constants $\alpha$ and $\beta$ so that $\forall x\in\M$ and $\forall\eta\in \T_{x}\M$,
    \begin{equation*}
    \|\retr_{x}(\eta) - x\|\leq \alpha \|\eta\|,\text{ and }
        \|\retr_{x}(\eta) - x - \eta\|\leq \beta \|\eta\|^2.
    \end{equation*}
    As a result, it can be shown that
    \begin{equation*}
        L_1=\frac{L}{2}\alpha^2+M\beta,
    \end{equation*}
    where $M$ is the upper bound of the gradient, which is defined in \eqref{grad_upper_bound}.
\end{lemma}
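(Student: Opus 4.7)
The plan is to start from the $L$-Lipschitz smoothness of $f$ in the ambient space $\mathbb{E}$ and transfer it to the geodesic-type inequality \eqref{geodesic-smooth-inequality} using the two retraction estimates. For any $x\in\M$ and $\eta\in\T_x\M$, the standard Euclidean descent lemma applied at the point $y=\retr_x(\eta)$ yields
\begin{equation*}
f(\retr_x(\eta)) \leq f(x) + \langle \nabla f(x), \retr_x(\eta) - x\rangle + \frac{L}{2}\|\retr_x(\eta) - x\|^2.
\end{equation*}
I would then decompose the linear term as
\begin{equation*}
\langle \nabla f(x), \retr_x(\eta) - x\rangle = \langle \nabla f(x), \eta\rangle + \langle \nabla f(x), \retr_x(\eta) - x - \eta\rangle,
\end{equation*}
and use that since $\eta\in\T_x\M$ and the metric is the ambient one, $\langle \nabla f(x),\eta\rangle = \langle \Proj_{\T_x\M}\nabla f(x),\eta\rangle = \langle \grad f(x),\eta\rangle$, producing the Riemannian-gradient inner product that appears in \eqref{geodesic-smooth-inequality}.

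Next, I would bound the two remaining terms using the retraction estimates. By Cauchy--Schwarz combined with the uniform bound \eqref{grad_upper_bound} and the second-order retraction estimate,
\begin{equation*}
|\langle \nabla f(x), \retr_x(\eta) - x - \eta\rangle| \leq \|\nabla f(x)\|\cdot\|\retr_x(\eta) - x - \eta\| \leq M\beta\|\eta\|^2,
\end{equation*}
while the first retraction estimate yields $\tfrac{L}{2}\|\retr_x(\eta)-x\|^2 \leq \tfrac{L\alpha^2}{2}\|\eta\|^2$. Collecting these bounds gives exactly \eqref{geodesic-smooth-inequality} with $L_1=\tfrac{L}{2}\alpha^2+M\beta$.

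It remains to justify the existence of the uniform constants $\alpha$ and $\beta$. Since $\retr_x$ is smooth with $\retr_x(0)=x$ and $\mathrm{D}\retr_x(0)=\mathrm{id}_{\T_x\M}$ by \Cref{def_retraction}, a second-order Taylor expansion of $\retr_x$ around the zero section of the tangent bundle produces, for each base point, local constants controlling $\|\retr_x(\eta)-x\|/\|\eta\|$ and $\|\retr_x(\eta)-x-\eta\|/\|\eta\|^2$. Compactness of $\M$ together with the fact that only tangent vectors of bounded norm need to be considered (the relevant tangent vectors are those of the form $\retr_x^{-1}(y)$ with $y\in\M$, and $\M$ has bounded diameter $D$) allows one to cover the relevant portion of the tangent bundle by finitely many such charts and take $\alpha,\beta$ as the maximum of the local constants.

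The main obstacle I expect is the uniformity argument for $\alpha$ and $\beta$: one must check that the second-order Taylor remainder for $\retr_x$ is controlled uniformly in both the base point $x\in\M$ and the magnitude of $\eta$ in the range that actually arises during the algorithm. This uniformity is exactly what the compactness of $\M$ is being invoked to supply, which is why it appears explicitly as a hypothesis of the lemma; once it is in place, the algebraic computation above is routine.
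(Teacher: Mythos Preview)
Your argument is correct and is exactly the standard derivation from \cite{boumal2019global}: apply the Euclidean descent lemma at $\retr_x(\eta)$, split $\retr_x(\eta)-x$ as $\eta + (\retr_x(\eta)-x-\eta)$, use $\langle\nabla f(x),\eta\rangle=\langle\grad f(x),\eta\rangle$ for $\eta\in\T_x\M$, and bound the remaining terms with the $\alpha,\beta$ estimates and the gradient bound $M$. The paper does not give its own proof of this lemma---it simply cites \cite{boumal2019global}---so there is nothing further to compare; your reconstruction matches the cited source. One small remark: the lemma asserts the $\alpha,\beta$ bounds for \emph{all} $\eta\in\T_x\M$, not only those arising as $\retr_x^{-1}(y)$; for large $\|\eta\|$ the bounds follow trivially from boundedness of the compact $\M$ (so $\|\retr_x(\eta)-x\|$ is bounded and hence eventually dominated by $\alpha\|\eta\|$ and $\beta\|\eta\|^2$), which complements your Taylor-expansion argument for small $\|\eta\|$.
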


Now we are ready to present the smoothness of the augmented Lagrangian function \eqref{aug-Lag-func-MY-smooth}.
\begin{lemma}\label{lemma_smoothness_lagrangian}
    For any $\{(z^k,\lambda^k)\}$ generated in Algorithm \ref{R_admm_Moreau_grad_step}, the augmented Lagrangian function $\LLg(x,z^k,\lambda^k)$ defined in \eqref{aug-Lag-func-MY-smooth} is $L_{\rho}$-retraction smooth with respect to $x\in\M$, where
    \begin{equation}\label{Lrho}
        L_\rho=\frac{L+\rho\|A\|_2^2}{2} \alpha^2+\left(M+3\|A\|_2 L_g+2 \rho\|A\|_2^2 D\right) \beta
    \end{equation}
    and $\|B\|_2$ denotes the spectral norm of matrix $B$.
\end{lemma}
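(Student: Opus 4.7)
The plan is to reduce the geodesic-smoothness claim on $\M$ to Euclidean Lipschitz smoothness of the map $x\mapsto \LLg(x,z^k,\lambda^k)$ in the ambient space, and then invoke Lemma \ref{lemma_geodesic_smoothness}. Since that lemma shows $L_1=\tfrac{L}{2}\alpha^2+M\beta$ for a generic $L$-Lipschitz-smooth $f$ with gradient bounded by $M$, the task splits naturally into two independent estimates: (i) a Lipschitz constant for $\nabla_x \LLg(\cdot,z^k,\lambda^k)$ in $\R^n$, and (ii) a uniform upper bound on $\|\nabla_x \LLg(x,z^k,\lambda^k)\|$ for $x\in\M$.

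For (i), compute the Euclidean gradient
\[
\nabla_x \LLg(x,z^k,\lambda^k) \;=\; \nabla f(x) + A^\top\lambda^k + \rho A^\top(Ax-z^k).
\]
The second term is constant in $x$, the first is $L$-Lipschitz by Assumption \ref{assum}, and the third is linear in $x$ with Lipschitz constant $\rho\|A^\top A\|_2$. So the Euclidean smoothness constant is $L+\rho\|A^\top A\|_2$, which yields the $\alpha^2$-term of \eqref{Lrho}.

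For (ii), I need a uniform gradient bound of the form
\[
\|\nabla_x \LLg(x,z^k,\lambda^k)\|\le M+\|A\|_2 L_g + \rho\|A^\top A\|_2 D + \|A\|_2(2L_g+\rho\|A\|_2 D),
\]
via the triangle inequality applied to the three summands above. The first piece uses \eqref{grad_upper_bound}. For the dual iterate, the identity $\lambda^{k+1}=\nabla g_\gamma(z^{k+1})$ derived in \eqref{grad_g_eq_lambda} together with the $L_g$-Lipschitz continuity of $g_\gamma$ (item 3 of Lemma \ref{lemma_moreau_envelope}) gives $\|\lambda^k\|\le L_g$, hence $\|A^\top\lambda^k\|\le\|A\|_2 L_g$. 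For the penalty term I split $\rho\|A^\top(Ax-z^k)\|\le\rho\|A^\top A\|_2\|x\|+\rho\|A\|_2\|z^k\|$, use compactness of $\M$ together with the diameter $D$ to obtain $\|x\|\le D$, and bound $\|z^k\|$ by rearranging the $\lambda$-update $\lambda^k=\lambda^{k-1}+\rho(Ax^k-z^k)$ into $z^k=Ax^k-\tfrac{1}{\rho}(\lambda^k-\lambda^{k-1})$, which gives $\|z^k\|\le\|A\|_2 D+\tfrac{2L_g}{\rho}$. Substituting produces exactly the constant appearing in front of $\beta$ in \eqref{Lrho}.

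Finally, plugging the smoothness constant from (i) and the gradient bound from (ii) into Lemma \ref{lemma_geodesic_smoothness} (with $f$ replaced by $\LLg(\cdot,z^k,\lambda^k)$) yields the stated $L_\rho$. The main obstacle is really the bookkeeping for the bounds on $\lambda^k$ and $z^k$: these are not assumed a priori, so one has to exploit the algorithmic identities \eqref{grad_g_eq_lambda} and the $\lambda$-update—this is where the $2L_g$ and the $\rho\|A\|_2 D$ inside the last parenthesis of \eqref{Lrho} originate, and it is the only nontrivial aspect beyond routine gradient arithmetic.
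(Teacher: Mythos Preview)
Your proposal is correct and follows essentially the same route as the paper's proof: first derive the uniform bounds $\|\lambda^k\|\le L_g$ via \eqref{grad_g_eq_lambda} and $\|z^k\|\le \|A\|_2 D+2L_g/\rho$ via the $\lambda$-update, then use these to obtain the Euclidean Lipschitz constant $L+\rho\|A^\top A\|_2$ and the uniform gradient bound on $\M$, and finally invoke Lemma~\ref{lemma_geodesic_smoothness}. The only cosmetic difference is the order in which you present the Lipschitz estimate and the gradient bound.
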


\begin{proof}{Proof.}
    We first show that $\{z^k\}$, $\{\lambda^k\}$, $k=0,1,\ldots$, generated in Algorithm \ref{R_admm_Moreau_grad_step} are uniformly bounded. Note that from \eqref{grad_g_eq_lambda}, we have
    \begin{equation}\label{lambda-bound}
        \|\lambda^{k}\|=\|\nabla g_{\gamma }(z^{k})\|\leq L_g,
    \end{equation}
    where the inequality follows from the facts that $g$ is $L_g$-Lipschitz continuous (Assumption \ref{assum}) and Lemma \ref{lemma_moreau_envelope}. 

    From the update of $\lambda^{k+1}$, i.e., $\lambda^{k+1} := \lambda^{k} + \rho (A x^{k+1}-z^{k+1})$, we have
    \[
    z^{k+1} = (\lambda^k - \lambda^{k+1}) / \rho + Ax^{k+1},
    \]
    which, together with  \eqref{lambda-bound}
 and Assumption \ref{assum}, immediately implies
    \begin{equation}\label{z-bound}
    \|z^{k+1}\|\leq \frac{2 L_g}{\rho} + \|A\|_{2} D.
    \end{equation}
    We now show that the gradient of $\LLg(x,z^k,\lambda^k)$, i.e., $\nabla_x\LLg(x,z^k,\lambda^k) = \nabla f(x) + A^\top\lambda^k+\rho A^\top(Ax - z^k)$, is uniformly upper bounded $\forall x\in\M$. To this end, we note that    \begin{equation}\label{proof_lipschitz_lagrangian}
    \begin{split}
        \|\nabla_x\LLg(x,z^k,\lambda^k)\|\leq &\|\nabla f(x)\| + \|A^\top \lambda^k\|+\rho \|A^\top(Ax - z^k)\| \\
        \leq & \|\nabla f(x)\| + \|A\|_{2}\|\lambda^k\|+\rho \|A^\top A\|_2 \|x\| + \rho\|A\|_{2}\|z^k\| \\
        \leq & M + \|A\|_{2}L_g + \rho \|A^\top A\|_2 D + \|A\|_{2}(2 L_g + \rho \|A\|_{2} D),
    \end{split}
    \end{equation}
    where the last inequality is due to \eqref{lambda-bound}, \eqref{z-bound} and Assumption \ref{assum}.
    
    Moreover, we have
    \begin{equation}\label{proof_lipschitz_cts_lagrangian}
    \begin{split}
        \|\nabla_x\LLg(x_1,z^k,\lambda^k) - \nabla_x\LLg(x_2,z^k,\lambda^k)\|\leq &\|\nabla f(x_1) - \nabla f(x_2)\|+\rho \|A^\top A(x_1 - x_2)\| \\
        \leq & L\|x_1 - x_2\|+\rho \|A^\top A\|_{2}\|x_1 - x_2\|.
    \end{split}
    \end{equation}
    
   By applying Lemma \ref{lemma_geodesic_smoothness} together with  \eqref{proof_lipschitz_lagrangian} and \eqref{proof_lipschitz_cts_lagrangian}, we immediately obtain the desired result.
\end{proof}

Now we give the following lemma regarding the decrease of the augmented Lagrangian function $\LLg$.
\begin{lemma}\label{lemma_decreasing_Lagrangian}
    For the sequence $\{(x^k,z^k,\lambda^k)\}$ generated in Algorithm \ref{R_admm_Moreau_grad_step}, we have:
    \begin{equation}\label{lemma_decreasing_Lagrangian-eq}
    \begin{split}
        &\mathcal{L}_{\rho ,\gamma }(x^{k+1},z^{k+1},\lambda^{k+1}) - \mathcal{L}_{\rho ,\gamma }(x^{k},z^{k},\lambda^{k}) \\
        \leq& \left(\frac{1}{\rho \gamma ^2}-\frac{\rho}{2}\right)\|z^{k+1}-z^{k}\|^2-\left(\frac{1}{\eta_{k}}-\frac{ L_{\rho }}{2}\right)\|\retr_{x^{k}}^{-1}(x^{k+1})\|^2,
    \end{split}
    \end{equation}
    where $L_{\rho }$ is defined in \eqref{Lrho}.
\end{lemma}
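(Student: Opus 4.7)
The plan is to bound the change in $\LLg$ across one iteration by splitting it into three telescoping pieces, one per variable updated, and bounding each piece separately:
\begin{equation*}
\LLg(x^{k+1},z^{k+1},\lambda^{k+1}) - \LLg(x^k,z^k,\lambda^k) = \Delta_\lambda + \Delta_z + \Delta_x,
\end{equation*}
where $\Delta_\lambda = \LLg(x^{k+1},z^{k+1},\lambda^{k+1}) - \LLg(x^{k+1},z^{k+1},\lambda^k)$, $\Delta_z = \LLg(x^{k+1},z^{k+1},\lambda^k) - \LLg(x^{k+1},z^k,\lambda^k)$, and $\Delta_x = \LLg(x^{k+1},z^k,\lambda^k) - \LLg(x^k,z^k,\lambda^k)$.

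For $\Delta_\lambda$, the definition of $\LLg$ gives $\Delta_\lambda = \langle \lambda^{k+1}-\lambda^k, Ax^{k+1}-z^{k+1}\rangle$, and the dual update $\lambda^{k+1}=\lambda^k+\rho(Ax^{k+1}-z^{k+1})$ turns this into $\frac{1}{\rho}\|\lambda^{k+1}-\lambda^k\|^2$. Applying \Cref{bound_dual_by_primal} then bounds this by $\frac{1}{\rho\gamma^2}\|z^{k+1}-z^k\|^2$. For $\Delta_z$, observe that in $z$ the function $\LLg(x^{k+1},\cdot,\lambda^k)$ is the sum of the convex smooth $g_\gamma$, a linear term in $z$, and the $\rho$-strongly convex quadratic $\frac{\rho}{2}\|Ax^{k+1}-z\|^2$, so the overall map is $\rho$-strongly convex in $z$. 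Since $z^{k+1}$ minimizes it, strong convexity immediately yields $\Delta_z \le -\frac{\rho}{2}\|z^{k+1}-z^k\|^2$.

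The main work is in $\Delta_x$, which I would handle via the geodesic smoothness of $\LLg(\cdot,z^k,\lambda^k)$ established in \Cref{lemma_smoothness_lagrangian}. Writing $v^k := -\eta_k \grad_x \LLg(x^k,z^k,\lambda^k)$ so that $x^{k+1}=\retr_{x^k}(v^k)$, the $L_\rho$-geodesic smooth inequality \eqref{geodesic-smooth-inequality} gives
\begin{equation*}
\Delta_x \le \langle \grad_x\LLg(x^k,z^k,\lambda^k), v^k\rangle + \frac{L_\rho}{2}\|v^k\|^2 = -\frac{1}{\eta_k}\|v^k\|^2 + \frac{L_\rho}{2}\|v^k\|^2.
\end{equation*}
Because we have assumed the retraction is injective, $v^k = \retr_{x^k}^{-1}(x^{k+1})$, so $\Delta_x \le -\bigl(\tfrac{1}{\eta_k}-\tfrac{L_\rho}{2}\bigr)\|\retr_{x^k}^{-1}(x^{k+1})\|^2$.

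Summing the three bounds produces exactly \eqref{lemma_decreasing_Lagrangian-eq}. The step I expect to be slightly delicate is verifying that the strong-convexity constant for $\Delta_z$ is indeed $\rho$ (not $\rho+1/\gamma$, which would be sharper but require reintroducing the auxiliary $y$-variable from \eqref{Moreau_envelope-3block}); using the plain-$z$ form \eqref{Moreau_envelope} and the convexity of $g_\gamma$ from \Cref{lemma_moreau_envelope} keeps the argument clean at $\rho$. Otherwise the computation is essentially mechanical once the three-piece decomposition is set up.
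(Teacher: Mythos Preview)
Your proposal is correct and follows essentially the same route as the paper: the same three-term telescoping decomposition, the same use of the dual update plus \Cref{bound_dual_by_primal} for $\Delta_\lambda$, the same geodesic-smoothness argument from \Cref{lemma_smoothness_lagrangian} for $\Delta_x$, and for $\Delta_z$ the paper writes out explicitly the convexity inequality for $g_\gamma$ at $z^{k+1}$ together with $\nabla g_\gamma(z^{k+1})=\lambda^{k+1}$, which is exactly your $\rho$-strong-convexity observation unpacked.
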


\begin{proof}{Proof.}
  First, we have
\begin{equation}\label{decrease-1}
\begin{split}
    & \ \mathcal{L}_{\rho ,\gamma }(x^{k+1},z^{k+1},\lambda^{k+1}) - \mathcal{L}_{\rho ,\gamma }(x^{k+1},z^{k+1},\lambda^{k}) \\ = & \ \langle \lambda^{k+1} - \lambda^{k}, A x^{k+1} - z^{k+1} \rangle \\
    = & \ \frac{1}{\rho}\|\lambda^{k+1} - \lambda^{k}\|^2 \leq \ \frac{1}{\rho\gamma ^2}\|z^{k+1}-z^{k}\|^2,
\end{split}
\end{equation}
where the inequality is from Lemma \ref{bound_dual_by_primal}.

Second, we have,
\begin{equation}\label{decrease-2}
\begin{split}
    &\mathcal{L}_{\rho ,\gamma }(x^{k+1},z^{k+1},\lambda^{k}) - \mathcal{L}_{\rho ,\gamma }(x^{k+1},z^{k},\lambda^{k})\\
    = & g_{\gamma }(z^{k+1}) - g_{\gamma }(z^{k}) + \langle \lambda^{k}, z^k - z^{k+1} \rangle + \frac{\rho}{2}(\|A x^{k+1}-z^{k+1}\|^2 - \|A x^{k+1}-z^{k}\|^2) \\
    = & g_{\gamma }(z^{k+1}) - g_{\gamma }(z^{k}) + \langle \lambda^{k}+\rho(A x^{k+1}-z^{k+1}), z^k - z^{k+1} \rangle - \frac{\rho}{2}\|z^{k+1}-z^{k}\|^2 \\
    \leq & - \frac{\rho}{2}\|z^{k+1}-z^{k}\|^2,
\end{split}
\end{equation}
where the inequality is by convexity of $g_{\gamma }$ and $\nabla g_{\gamma }(z^{k+1}) = \lambda^{k+1}=\lambda^{k}+\rho(A x^{k+1}-z^{k+1})$.

Third, by Lemma \ref{lemma_smoothness_lagrangian} and \eqref{geodesic-smooth-inequality}, we obtain
\begin{equation}\label{decrease-3}
\begin{split}
    & \ \mathcal{L}_{\rho ,\gamma }(x^{k+1},z^{k},\lambda^{k}) - \mathcal{L}_{\rho ,\gamma }(x^{k},z^{k},\lambda^{k}) \\
    \leq & \ \langle \grad_x \mathcal{L}_{\rho ,\gamma }(x^{k},z^{k},\lambda^{k}), \retr_{x^{k}}^{-1}(x^{k+1}) \rangle+\frac{L_\rho}{2}\|\retr_{x^{k}}^{-1}(x^{k+1})\|^2 \\
    = & \ -\left(\frac{1}{\eta_{k}}-\frac{ L_{\rho }}{2}\right)\|\retr_{x^{k}}^{-1}(x^{k+1})\|^2,
\end{split}
\end{equation}
where the equality follows from the $x$-update in Algorithm \ref{R_admm_Moreau_grad_step}.

Combining \eqref{decrease-1}, \eqref{decrease-2}, and \eqref{decrease-3} yields the desired result \eqref{lemma_decreasing_Lagrangian-eq}.
\end{proof}

The following lemma shows that the augmented Lagrangian function $\LLg$ is lower bounded.
\begin{lemma}\label{lemma_lower_bdd}
    If $\rho\gamma\geq 1$, then the sequence $\{\mathcal{L}_{\rho,\gamma}(x^{k},z^{k},\lambda^{k})\}$ is uniformly lower bounded by $F^*-\gamma L_g^2$, where $F^*$ is the optimal value of \eqref{problem_nonsmooth}.
\end{lemma}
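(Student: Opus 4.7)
The plan is to rewrite the augmented Lagrangian at $(x^k,z^k,\lambda^k)$ using the key identity $\nabla g_\gamma(z^k)=\lambda^k$ (which was established in \eqref{grad_g_eq_lambda} during the proof of Lemma \ref{bound_dual_by_primal}), and then to absorb the resulting linear term via the smoothness of $g_\gamma$. Specifically, substituting $\lambda^k=\nabla g_\gamma(z^k)$ into \eqref{aug-Lag-func-MY-smooth} gives
\[
\mathcal{L}_{\rho,\gamma}(x^k,z^k,\lambda^k) = f(x^k) + g_\gamma(z^k) + \langle \nabla g_\gamma(z^k), Ax^k - z^k\rangle + \frac{\rho}{2}\|Ax^k - z^k\|^2.
\]

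Next, I invoke part 4 of Lemma \ref{lemma_moreau_envelope}, which says $g_\gamma$ is $(1/\gamma)$-smooth, to obtain the descent inequality
\[
g_\gamma(Ax^k) \leq g_\gamma(z^k) + \langle \nabla g_\gamma(z^k), Ax^k - z^k\rangle + \frac{1}{2\gamma}\|Ax^k - z^k\|^2.
\]
Rearranging and plugging back into the expression for $\mathcal{L}_{\rho,\gamma}$, I obtain
\[
\mathcal{L}_{\rho,\gamma}(x^k,z^k,\lambda^k) \geq f(x^k) + g_\gamma(Ax^k) + \left(\frac{\rho}{2} - \frac{1}{2\gamma}\right)\|Ax^k - z^k\|^2.
\]
This is precisely where the hypothesis $\rho\gamma\geq 1$ is used: it makes the quadratic term nonnegative, so it can be dropped to yield $\mathcal{L}_{\rho,\gamma}(x^k,z^k,\lambda^k) \geq f(x^k) + g_\gamma(Ax^k)$.

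Finally, I apply part 1 of Lemma \ref{lemma_moreau_envelope} to compare $g_\gamma$ with $g$: $g_\gamma(Ax^k) \geq g(Ax^k) - \gamma L_g^2$. Combined with $x^k\in\M$ (enforced by the retraction in Algorithm \ref{R_admm_Moreau_grad_step}) and the definition $F(x)=f(x)+g(Ax)$, this yields
\[
\mathcal{L}_{\rho,\gamma}(x^k,z^k,\lambda^k) \geq F(x^k) - \gamma L_g^2 \geq F^* - \gamma L_g^2,
\]
which is the required bound. No real obstacle arises: the only delicate point is to recognize that $\nabla g_\gamma(z^k)=\lambda^k$ lets one interpret the linear $\langle\lambda,\cdot\rangle$ term as a Taylor-like expansion of $g_\gamma$ around $z^k$, and from there the $(1/\gamma)$-smoothness and the $\rho\gamma\geq 1$ assumption are tailor-made to close the argument.
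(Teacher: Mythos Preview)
Your proof is correct and follows essentially the same route as the paper: substitute $\lambda^k=\nabla g_\gamma(z^k)$, apply the $(1/\gamma)$-smoothness of $g_\gamma$ to get the inequality $\mathcal{L}_{\rho,\gamma}(x^k,z^k,\lambda^k)\geq f(x^k)+g_\gamma(Ax^k)+(\tfrac{\rho}{2}-\tfrac{1}{2\gamma})\|Ax^k-z^k\|^2$, drop the nonnegative quadratic via $\rho\gamma\geq 1$, and finish with $g_\gamma\geq g-\gamma L_g^2$ and $F(x^k)\geq F^*$.
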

\begin{proof}{Proof.}
    By the $1/\gamma$ Lipschitz smoothness of $g_{\gamma}$ (see Lemma \ref{lemma_moreau_envelope}) and $\nabla g_{\gamma}(z^{k}) = \lambda^{k}$, we get
    $$
        g_{\gamma}(A x)\leq g_{\gamma}(z) + \langle \nabla g_{\gamma}(z), A x - z \rangle + \frac{1}{2\gamma}\|A x - z\|^2,
    $$
    which implies
    \begin{equation*}
    \begin{aligned}
    \mathcal{L}_{\rho,\gamma}(x^{k},z^{k},\lambda^{k})
    & = f(x^{k})+g_{\gamma}(z^{k})+\langle \lambda^{k}, A x^{k}-z^{k} \rangle+\frac{\rho}{2}\|A x^{k}-z^{k}\|^2\\
    & \geq  f(x^{k})+g_{\gamma}(A x^{k})+\left(\frac{\rho}{2} - \frac{1}{2\gamma}\right)\|A x^{k}-z^{k}\|^2 \\ & \geq f(x^{k})+g_{\gamma}(A x^{k}) \\ & \geq f(x^{k})+g(A x^{k}) - \gamma L_g^2 \\ & \geq F^* - \gamma L_g^2,
    \end{aligned}
    \end{equation*}
    where the third inequality follows from Lemma \ref{lemma_moreau_envelope}.
\end{proof}

The following lemma gives an upper bound for $G^k\in\partial\mathcal{L}(x^k,y^k,\lambda^k)$.
\begin{lemma}\label{lemma_subgradient_bound}
    Denote the iterates of Algorithm \ref{R_admm_Moreau_grad_step} by $\{(x^k, y^k,z^k,\lambda^k)\}$. There exists $G^k\in\partial\mathcal{L}(x^k,y^k,\lambda^k)$, $\forall k\geq 1$, as defined in Definition \ref{definition_eps_stationary}, such that:
    $$
        \|G^{k}\|^2\leq \frac{2}{\eta_{k}^2}\|\retr_{x^{k}}^{-1}(x^{k+1})\|^2 + \frac{2(\rho^2\|A\|_{2}^2 + 1)}{\rho^2\gamma^2} \|z^{k}-z^{k-1}\|^2 + 2\gamma^2  L_g^2.
    $$
\end{lemma}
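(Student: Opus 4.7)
The plan is to construct an explicit element $G^k\in\partial\mathcal{L}(x^k,y^k,\lambda^k)$ by filling in the three blocks in Definition \ref{definition_eps_stationary} separately, bounding each block, and then combining the squared norms via $(a+b)^2\le 2a^2+2b^2$. Three identities will be used repeatedly: (i) the dual recursion gives $Ax^k-z^k=(\lambda^k-\lambda^{k-1})/\rho$; (ii) combining the $z$-update with the $\lambda$-update yields $z^k-y^k=\gamma\lambda^k$, i.e.\ $\lambda^k=\nabla g_\gamma(z^k)$ as in \eqref{grad_g_eq_lambda}; and (iii) from \eqref{lambda-bound}, $\|\lambda^k\|\le L_g$.

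For the first block I would write $\proj_{\T_{x^k}\M}(\nabla f(x^k)+A^\top\lambda^k) = \grad_x\mathcal{L}_{\rho,\gamma}(x^k,z^k,\lambda^k) - \rho\,\proj_{\T_{x^k}\M}(A^\top(Ax^k-z^k))$, straight from the definition of the augmented Lagrangian gradient. The first term on the right has norm exactly $\|\retr_{x^k}^{-1}(x^{k+1})\|/\eta_k$, read off from the $x$-update in Algorithm \ref{R_admm_Moreau_grad_step}. By identity (i), the second term equals $\proj_{\T_{x^k}\M}(A^\top(\lambda^k-\lambda^{k-1}))$, whose norm is at most $\|A\|_2\|z^k-z^{k-1}\|/\gamma$ by Lemma \ref{bound_dual_by_primal}. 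Squaring and applying $(a+b)^2\le 2a^2+2b^2$ produces the $2\|\retr_{x^k}^{-1}(x^{k+1})\|^2/\eta_k^2$ term and a $2\|A\|_2^2\|z^k-z^{k-1}\|^2/\gamma^2$ contribution.

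For the second block I use the prox characterization of $y^k$ (shifted index in \eqref{y_update}): its optimality condition produces a specific $w^k\in\partial g(y^k)$ with $w^k=(\rho(Ax^k-y^k)+\lambda^{k-1})/(1+\rho\gamma)$. Eliminating $\lambda^{k-1}$ via identity (i) reduces the numerator to $\rho(z^k-y^k)+\lambda^k$, and then identity (ii) collapses this to $(1+\rho\gamma)\lambda^k$, so $w^k=\lambda^k$ exactly. Consequently $w^k-\lambda^k=0$ and the second block of $G^k$ can be chosen to vanish. This exact cancellation between the prox, the closed-form $z$-update, and the dual recursion is the only non-routine part of the argument, and recognizing it is what I expect to be the main obstacle.

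For the third block I split $Ax^k-y^k=(Ax^k-z^k)+(z^k-y^k)$. Identity (i) together with Lemma \ref{bound_dual_by_primal} bounds the first summand by $\|z^k-z^{k-1}\|/(\rho\gamma)$, while identities (ii) and (iii) give $\|z^k-y^k\|=\gamma\|\lambda^k\|\le\gamma L_g$; squaring and applying $(a+b)^2\le 2a^2+2b^2$ yields a $2\|z^k-z^{k-1}\|^2/(\rho^2\gamma^2)$ piece and a $2\gamma^2 L_g^2$ piece. Summing the squared-norm contributions from all three blocks and merging the coefficients of $\|z^k-z^{k-1}\|^2$ as $2\|A\|_2^2/\gamma^2+2/(\rho^2\gamma^2)=2(\rho^2\|A\|_2^2+1)/(\rho^2\gamma^2)$ matches the stated bound exactly.
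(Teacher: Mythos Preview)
Your proposal is correct and follows essentially the same route as the paper: the paper also shows $\lambda^k\in\partial g(y^k)$ (citing \eqref{z_step-optcond-1}, \eqref{z_step-optcond-2} and the $\lambda$-update directly rather than re-deriving it through the prox characterization), takes the second block to vanish, rewrites the first block via the $x$-update exactly as you do, and splits $Ax^k-y^k$ through $z^k$ with the same two bounds. Your more explicit verification that $w^k=\lambda^k$ is just a longer form of the paper's one-line observation, so there is no substantive difference.
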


\begin{proof}{Proof.}
From \eqref{z_step-optcond-1}, \eqref{z_step-optcond-2} and the update of $\lambda^{k+1}$ in Algorithm
\ref{R_admm_Moreau_grad_step}, we know that $\lambda^k\in\partial g(y^k)$ for $k=1,2,\ldots$. Therefore, there exist $G^k\in\partial\mathcal{L}(x^k,y^k,\lambda^k)$ such that
\begin{equation*}
\begin{aligned}
    \|G^k\|^2 = & \|\proj_{\T_{x^{k}}\M}\left(\nabla f(x^{k}) + A^\top \lambda^{k} \right)\|^2 + \| A x^{k} - y^{k} \|^2 \\
    \leq & \|\proj_{\T_{x^{k}}\M}\left(\nabla f(x^{k}) + A^\top \lambda^{k} \right)\|^2 +2 \| A x^{k} - z^{k} \|^2 + 2\|z^{k} - y^{k}\|^2.
\end{aligned}
\end{equation*}
Now from the $x$ update of Algorithm \ref{R_admm_Moreau_grad_step}, we know that
\[
\proj_{\T_{x^{k}}\M}\left(\nabla f(x^{k}) + A^\top \lambda^{k} \right)=-\frac{1}{\eta_{k}}\retr_{x^{k}}^{-1}(x^{k+1}) - \proj_{\T_{x^{k}}\M}(\rho A^\top(A x^{k}-z^{k})).
\]
Therefore, we have
\begin{equation*}
\begin{aligned}
    \|G^k\|^2 \leq & \left\|\frac{1}{\eta_{k}}\retr_{x^{k}}^{-1}(x^{k+1}) + \proj_{\T_{x^{k}}\M}(\rho A^\top(A x^{k}-z^{k}))\right\|^2 +2 \| A x^{k} - z^{k} \|^2 + 2\|z^{k} - y^{k}\|^2 \\
    \leq & \frac{2}{\eta_{k}^2}\|\retr_{x^{k}}^{-1}(x^{k+1})\|^2 + 2\rho^2 \| \proj_{\T_{x^{k}}\M}(A^\top(A x^{k}-z^{k}))\|^2 +2 \| A x^{k} - z^{k} \|^2 + 2\|z^{k} - y^{k}\|^2 \\
    \leq & \frac{2}{\eta_{k}^2}\|\retr_{x^{k}}^{-1}(x^{k+1})\|^2 + 2\rho^2\|A\|_{2}^2\|A x^{k}-z^{k}\|^2 +2 \| A x^{k} - z^{k} \|^2 + 2\|z^{k} - y^{k}\|^2 \\
    = & \frac{2}{\eta_{k}^2}\|\retr_{x^{k}}^{-1}(x^{k+1})\|^2 + 2(\rho^2\|A\|_{2}^2 + 1) \| A x^{k} - z^{k} \|^2 + 2\|z^{k} - y^{k}\|^2.
\end{aligned}
\end{equation*}

Now by the update of $\lambda^k$ in Algorithm \ref{R_admm_Moreau_grad_step} and \eqref{bound_dual_by_primal-eq} we have $\rho\|A x^{k}-z^{k}\|=\|\lambda^{k} - \lambda^{k-1}\|\leq\frac{1}{\gamma }\|z^{k}-z^{k-1}\|$. By \eqref{z_step-optcond-2} we have $z^{k} - y^{k}\in \gamma\partial g(y^{k})$ so that $\|z^{k} - y^{k}\|\leq \gamma  L_g$. Combining these results we get
\begin{equation*}
\begin{aligned}
    \|G^k\|^2 \leq & \frac{2}{\eta_{k}^2}\|\retr_{x^{k}}^{-1}(x^{k+1})\|^2 + 2(\rho^2\|A\|_{2}^2 + 1) \| A x^{k} - z^{k} \|^2 + 2\|z^{k} - y^{k}\|^2 \\
    \leq & \frac{2}{\eta_{k}^2}\|\retr_{x^{k}}^{-1}(x^{k+1})\|^2 + \frac{2(\rho^2\|A\|_{2}^2 + 1)}{\rho^2\gamma^2} \|z^{k}-z^{k-1}\|^2 + 2\gamma^2  L_g^2,
\end{aligned}
\end{equation*}
which gives the desired result.
\end{proof}

Finally, we have the following convergence result for Algorithm \ref{R_admm_Moreau_grad_step}.

\begin{theorem}\label{thm_convergence}
    Denote the iterates of Algorithm \ref{R_admm_Moreau_grad_step} by $\{(x^k, y^k,z^k,\lambda^k)\}$. For a given tolerance $\epsilon>0$, we set $\rho=1/\epsilon$, $\gamma=\sqrt{\frac{2}{\rho^2} + \frac{\rho^2\|A\|_{2}^2 + 1}{\rho^3 L_{\rho}}} = \mathcal{O}(\epsilon)$, also $\eta_{k}=\eta=\frac{1}{L_{\rho}}$. Note that our choices of $\rho$ and $\gamma$ guarantees that $\rho\gamma>1$. Then there exist $G^k\in\partial\mathcal{L}(x^k,y^k,\lambda^k)$, $k=1,2,\ldots$, such that $$\min_{k=1,...,K}\|G^k\|^2\leq\epsilon^2,$$
    provided that
    $$
        K = \mathcal{O}\left(\frac{1}{\epsilon^4}\right).
    $$
    That is, Algorithm \ref{R_admm_Moreau_grad_step} generates an $\epsilon$-stationary point to \eqref{problem_nonsmooth_splitting} in $\mathcal{O}({\epsilon^{-4}})$ iterations.
\end{theorem}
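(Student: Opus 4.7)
The plan is to combine the three preceding technical lemmas: the descent inequality (Lemma \ref{lemma_decreasing_Lagrangian}), the uniform lower bound on the augmented Lagrangian (Lemma \ref{lemma_lower_bdd}), and the subgradient bound (Lemma \ref{lemma_subgradient_bound}). First I would verify that the prescribed choices $\eta_{k}=1/L_{\rho}$ and $\gamma^{2}=2/\rho^{2}+(\rho^{2}\|A\|_{2}^{2}+1)/(\rho^{3}L_{\rho})$ make both coefficients on the right-hand side of Lemma \ref{lemma_decreasing_Lagrangian} strictly negative. The first is immediate: $1/\eta_{k}-L_{\rho}/2 = L_{\rho}/2$. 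For the $z$-coefficient, substituting into $\rho/2 - 1/(\rho\gamma^{2})$ and simplifying yields $(\rho^{2}\|A\|_{2}^{2}+1)/(2\rho^{2}\gamma^{2}L_{\rho})>0$. Since $\rho^{2}\gamma^{2}\geq 2$, we also obtain $\rho\gamma\geq 1$, so the hypothesis of Lemma \ref{lemma_lower_bdd} is satisfied.

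Next, telescoping Lemma \ref{lemma_decreasing_Lagrangian} from $k=0$ to $K-1$ and invoking Lemma \ref{lemma_lower_bdd} yields
\[
\sum_{k=0}^{K-1}\|\retr_{x^{k}}^{-1}(x^{k+1})\|^{2}\leq \frac{2C}{L_{\rho}},\qquad \sum_{k=0}^{K-1}\|z^{k+1}-z^{k}\|^{2}\leq \frac{2\rho^{2}\gamma^{2}L_{\rho}C}{\rho^{2}\|A\|_{2}^{2}+1},
\]
where $C:=\mathcal{L}_{\rho,\gamma}(x^{0},z^{0},\lambda^{0})-F^{*}+\gamma L_{g}^{2}=\mathcal{O}(1)$ under a natural initialization (e.g.\ $\lambda^{0}=0$, $z^{0}=Ax^{0}$, so that $\mathcal{L}^{0}=f(x^{0})+g_{\gamma}(Ax^{0})$ is bounded by compactness of $\M$). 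Now summing Lemma \ref{lemma_subgradient_bound} over $k=1,\dots,K$ and inserting these bounds, the key observation is that $\gamma$ was chosen precisely so that the two prefactors $2/\eta^{2}=2L_{\rho}^{2}$ and $2(\rho^{2}\|A\|_{2}^{2}+1)/(\rho^{2}\gamma^{2})$ multiply their respective sums to produce the same $\mathcal{O}(CL_{\rho})$ order, giving
\[
\sum_{k=1}^{K}\|G^{k}\|^{2}\leq \mathcal{O}(L_{\rho})+2K\gamma^{2}L_{g}^{2}.
\]
With $L_{\rho}=\mathcal{O}(1/\epsilon)$ and $\gamma=\mathcal{O}(\epsilon)$, bounding $\min_{k}\|G^{k}\|^{2}$ by the average of the left-hand side yields $\mathcal{O}\bigl(1/(\epsilon K)\bigr)+\mathcal{O}(\epsilon^{2})$; forcing this below $\epsilon^{2}$ pins down the claimed complexity.

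The main obstacle is the implicit, circular dependence among the parameters: $\gamma$ is defined in terms of $L_{\rho}$, which itself depends on $\rho$ (and is in fact $\Theta(\rho)$ by Lemma \ref{lemma_smoothness_lagrangian}). One must verify simultaneously that $\gamma$ is small enough for the Moreau-envelope bias $\gamma^{2}L_{g}^{2}$ to fit within the $\epsilon^{2}$ tolerance, yet large enough that the $z$-coefficient in the descent inequality stays of the correct order. The delicate matching of constants so that the two sums in the subgradient bound collapse to the same $\mathcal{O}(L_{\rho})$ scale is exactly what drives the whole parameter design, and is the step where a naive analysis would lose a factor and degrade the final rate.
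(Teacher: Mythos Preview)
Your argument is correct and, in its organization, differs from the paper's in a way that actually sharpens the result. The paper does not bound the two telescoping sums separately; instead it inserts Lemma~\ref{lemma_decreasing_Lagrangian} directly into Lemma~\ref{lemma_subgradient_bound} at each iteration, replacing $\tfrac{2}{\eta^{2}}\|\retr_{x^{k}}^{-1}(x^{k+1})\|^{2}$ by $\tfrac{4}{\eta}\bigl(\mathcal{L}_{\rho,\gamma}(x^{k},z^{k},\lambda^{k})-\mathcal{L}_{\rho,\gamma}(x^{k+1},z^{k+1},\lambda^{k+1})\bigr)-\tfrac{4}{\eta}\bigl(\tfrac{\rho}{2}-\tfrac{1}{\rho\gamma^{2}}\bigr)\|z^{k+1}-z^{k}\|^{2}$, and then uses the exact parameter match $\tfrac{2(\rho^{2}\|A\|_{2}^{2}+1)}{\rho^{2}\gamma^{2}}\le \tfrac{4}{\eta}\bigl(\tfrac{\rho}{2}-\tfrac{1}{\rho\gamma^{2}}\bigr)$ so that the $\|z^{k}-z^{k-1}\|^{2}$ and $\|z^{k+1}-z^{k}\|^{2}$ terms telescope when summed. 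That telescoping leaves a boundary residual $\tfrac{2\rho}{\eta K}\|z^{1}-z^{0}\|^{2}=\mathcal{O}(1/(\epsilon^{2}K))$, and it is precisely this term that drives $K=\mathcal{O}(1/\epsilon^{4})$ in the paper's accounting. By first summing Lemma~\ref{lemma_decreasing_Lagrangian} to obtain stand-alone bounds on $\sum\|z^{k+1}-z^{k}\|^{2}$ and $\sum\|\retr_{x^{k}}^{-1}(x^{k+1})\|^{2}$ and only then feeding them into the summed subgradient bound, you avoid this boundary term entirely. Consequently, under your initialization $z^{0}=Ax^{0}$, $\lambda^{0}=0$ (which makes $C=\mathcal{O}(1)$), your bound $\min_{k}\|G^{k}\|^{2}\le\mathcal{O}(1/(\epsilon K))+\mathcal{O}(\epsilon^{2})$ actually yields $K=\mathcal{O}(1/\epsilon^{3})$, one order better than the stated claim; with an arbitrary fixed initialization satisfying $Ax^{0}\ne z^{0}$ one has $C=\mathcal{O}(\rho)$ and your approach recovers $\mathcal{O}(1/\epsilon^{4})$. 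Your closing remark ``pins down the claimed complexity'' therefore undersells what you have shown.
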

\begin{proof}{Proof.}
    From Lemma \ref{lemma_subgradient_bound}, there exist $G^k\in\partial\mathcal{L}(x^k,y^k,\lambda^k)$, $k=1,2,\ldots$ such that
    $$
        \|G^{k}\|^2\leq \frac{2}{\eta_{k}^2}\|\retr_{x^{k}}^{-1}(x^{k+1})\| + \frac{2(\rho^2\|A\|_{2}^2 + 1)}{\rho^2\gamma^2} \|z^{k}-z^{k-1}\|^2 + 2\gamma^2  L_g^2,
    $$
    which, combining with \eqref{lemma_decreasing_Lagrangian-eq} and $\eta_k=1/L_\rho$, yields
    \[
    \begin{split}
        \|G^{k}\|^2& \ \leq \frac{4}{\eta_{k}}\left(\mathcal{L}_{\rho ,\gamma }(x^{k},z^{k},\lambda^{k}) - \mathcal{L}_{\rho ,\gamma }(x^{k+1},z^{k+1},\lambda^{k+1})\right) \\
        & \ + \left(\frac{2(\rho^2\|A\|_{2}^2 + 1)}{\rho^2\gamma^2}\|z^{k}-z^{k-1}\|^2 - \frac{4}{\eta_{k}}\left(\frac{\rho}{2} - \frac{1}{\rho \gamma ^2}\right)\|z^{k}-z^{k+1}\|^2 \right) + 2\gamma ^2 L_g^2.
    \end{split}
    \]
     Now by taking $\gamma$, $\rho$ and $\eta_{k}=\eta$ as described in the theorem, it is easy to verify that 
    $$
        \frac{2(\rho^2\|A\|_{2}^2 + 1)}{\rho^2\gamma^2} \leq \frac{4}{\eta_{k}}\left(\frac{\rho}{2} - \frac{1}{\rho \gamma ^2}\right).
    $$
    Therefore, we have
    \[
    \begin{split}
        \|G^{k}\|^2&\leq \frac{4}{\eta_{k}}\left(\mathcal{L}_{\rho ,\gamma }(x^{k},z^{k},\lambda^{k}) - \mathcal{L}_{\rho ,\gamma }(x^{k+1},z^{k+1},\lambda^{k+1})\right) \\
        & \ + \left(\frac{4}{\eta_{k}}\left(\frac{\rho}{2} - \frac{1}{\rho \gamma ^2}\right)\|z^{k}-z^{k-1}\|^2 - \frac{4}{\eta_{k}}\left(\frac{\rho}{2} - \frac{1}{\rho \gamma ^2}\right)\|z^{k}-z^{k+1}\|^2 \right) + 2\gamma ^2 L_g^2.
    \end{split}
    \]
    Now by summing this inequality over $k=1,\ldots,K$ and using Lemma \ref{lemma_lower_bdd}, we get
    \[
    \begin{split}
        \frac{1}{K}\sum_{k=1}^K\|G^{k}\|^2&\leq \frac{4}{\eta K}(\mathcal{L}_{\rho ,\gamma}(x^{1},z^{1},\lambda^{1}) - F^* + \gamma L_g^2)+\frac{2\rho}{\eta K}\|z^{1}-z^{0}\|^2+ 2\gamma^2 L_g^2.
    \end{split}
    \]

    Since we take $\gamma=\mathcal{O}(\epsilon)$, $\rho=\frac{1}{\epsilon}$ and $\eta=\frac{1}{L_{\rho}}=\mathcal{O}({\epsilon})$, to ensure $\min_{k=1,...,K}\|G^k\|^2\leq\epsilon^2$, we need $K=\mathcal{O}(\frac{1}{\epsilon^4})$.
\end{proof}

\section{Applications and Numerical Experiments}\label{sec_numeric}

Problem \eqref{problem_nonsmooth} finds many applications in machine learning, statistics and signal processing. For example, K-means clustering \cite{carson2017manifold}, sparse spectral clustering \cite{lu2018nonconvex,park2018spectral}, and orthogonal dictionary learning \cite{Spielman-Wang-Wright-2012,Demanet-Hand-2014,Qu-Sun-Wright-sparse-vector-2016,Sun-CDR-part1-2017,Sun-CDR-part2-2017} are all of the form of \eqref{problem_nonsmooth}.
In this section, we present two representative applications of \eqref{problem_nonsmooth} and then report the numerical results of our Algorithm \ref{R_admm_Moreau_grad_step} for solving them.

{\bf Example 1. Sparse Principal Component Analysis (PCA).}
    Principal Component Analysis, proposed by Pearson \cite{pearson1901liii} and later developed by Hotelling \cite{hotelling1933analysis}, is one of the most fundamental statistical tools in analyzing high-dimensional data. Sparse PCA seeks principal components with very few nonzero components. For given data matrix $A\in\br^{m\times n}$, the sparse PCA that seeks the leading $p$ $(p<\min\{m,n\})$ sparse loading vectors can be formulated as
    \begin{equation}\label{sPCA}
    \begin{split}
            \min_{X} & \ F(X):=-\frac{1}{2}\mathrm{Tr}(X^\top A^\top A X) + \mu\|X\|_{1} \\
            \st & \ X\in\St(n,p),
    \end{split}
    \end{equation}
    where $\mu>0$ is a weighting parameter. This is the original formulation of sparse PCA as proposed by Jolliffe \etal in \cite{Jolliffe2003}, where the model is called SCoTLASS and imposes sparsity and orthogonality to the loading vectors simultaneously. When $\mu=0$, \eqref{sPCA} reduces to computing the leading $p$ eigenvalues and the corresponding eigenvectors of $A^\top A$. When $\mu>0$, the $\ell_1$ norm $\|X\|_1$ can promote sparsity of the loading vectors. There are many numerical algorithms for solving \eqref{sPCA} when $p=1$. In this case, \eqref{sPCA} is relatively easy to solve because $X$ reduces to a vector and the constraint set reduces to a sphere. However, there has been very limited literature for the case $p>1$. Existing works, including \cite{Zou-spca-2006,daspremont-sparsePCA-direct-formulation-2007,Shen-Huang-spca-2008,Journee-Nesterov-sparsePCA-JMLR-2010,Ma-SPCA-2011-submit}, do not impose orthogonal loading directions. As discussed in \cite{Journee-Nesterov-sparsePCA-JMLR-2010}, ``Simultaneously enforcing sparsity and orthogonality seems to be a hard (and perhaps questionable) task.'' We refer the interested reader to \cite{zou2018selective} for more details on existing algorithms for solving sparse PCA.

{\bf Example 2. Orthogonal Dictionary Learning (ODL) and Dual principal component pursuit (DPCP).}
In ODL, one is given a set of $p$ $(p\gg n)$ data points $\by_1,\ldots,\by_p \in \br^n$ and aims to find an orthonormal basis of $\br^n$ to represent them compactly. In other words, by letting $Y = [\by_1,\ldots,\by_p]\in\br^{n\times p}$, we want to find an orthogonal matrix $X\in\br^{n\times n}$ and a sparse matrix $A\in\br^{n\times p}$ such that $Y = XA$. Since $X$ is orthogonal, we know that $A = X^\top Y$. This naturally leads to the following matrix version of ODL \cite{Spielman-Wang-Wright-2012,Demanet-Hand-2014,Qu-Sun-Wright-sparse-vector-2016,Sun-CDR-part1-2017,Sun-CDR-part2-2017}:
\begin{equation}\label{dpcp}
    \begin{split}
            \min_{X} & \ \|Y^\top X\|_1 \\
            \st & \ X\in\St(n,n).
    \end{split}
    \end{equation}
Here, the $\ell_1$ norm is used to promote the sparsity of $A=X^\top Y$, and the constraint set $\St(n,n)$ is known as the orthogonal group, which is a special case of the Stiefel manifold.

Another representative application of \eqref{dpcp} is robust subspace recovery (RSR) \cite{Lerman-convex-relax-FoCM-2015,Lerman-RSR-IRLS-2018,Lerman-well-tempered-2019,Lerman-stochastic-private-2022}.
RSR aims to fit a linear subspace to a dataset corrupted by outliers, which is a fundamental problem in machine learning and data mining. RSR can be described as follows. Given a dataset $Y = [\X, \CO]\Gamma\in\br^{n\times (p_1+p_2)}$, where $\X\in\br^{n\times p_1}$ are inlier points spanning a $d$-dimensional subspace $\CS$ of $\br^n$ ($d<p_1$), $\CO\in\br^{n\times p_2}$ are outlier points without linear structure, and $\Gamma\in\br^{(p_1+p_2)\times(p_1+p_2)}$ is an unknown permutation, the goal is to recover the inlier space $\CS$, or equivalently, to cluster the points into inliers and outliers. For a more comprehensive review of RSR, see the recent survey paper by Lerman and Maunu \cite{Lerman-PIEEE-survey}. The dual principal component pursuit (DPCP) is a recently proposed approach to RSR that seeks to learn recursively a basis for the orthogonal complement $\CS$ by solving~\eqref{dpcp} when $X$ reduces to a vector, i.e.,
\be\label{DPCP}
\min_{\bx\in\br^n} \ f(\bx):=\|Y^\top \bx\|_1 \quad \st \quad \|\bx\|_2 = 1,
\ee
The idea of DPCP is to first compute a normal vector $\bx$ to a hyperplane $\CH$ that contains all inliers $\X$. As outliers are not orthogonal to $\bx$ and the number of outliers is known to be small, the normal vector $\bx$ can be found by solving \eqref{DPCP}. It is shown in \cite{Tsakiris-Vidal-2018,zhu2018dual} that under certain conditions, solving \eqref{DPCP} indeed yields a vector that is orthogonal to $\CS$, given that the number of outliers $p_2$ is at most on the order of $O(p_1^2)$.
If $d$ is known, then one can recover $\CS$ as the intersection of the $p:=n-d$ orthogonal hyperplanes that contain $\X$, which amounts to solving the following matrix optimization problem:
\be\label{DPCP-matrix}
    \min_{X\in\R^{n\times(n-d)}} \|Y^\top X\|_1\quad \st\quad X^\top X = I_{n-d}.
\ee
Note that \eqref{sPCA}-\eqref{DPCP-matrix} are all in the form of \eqref{problem_nonsmooth}.

\subsection{Numerical Experiments on Sparse PCA}

In this subsection, we conduct experiments to test the performance of our Riemannian ADMM\footnote{Our code is available at \url{https://github.com/JasonJiaxiangLi/RADMM}.} for solving sparse PCA \eqref{sPCA}, and compare it with the performance of ManPG \cite{chen2020proximal} and Riemannian subgradient method \cite{ferreira1998subgradient,li2019weakly}. To apply Riemannian ADMM, we first rewrite \eqref{sPCA} as:
\begin{equation}\label{sPCA-split}
\begin{split}
        \min_{X,Y} & \ -\frac{1}{2}\Tr(X^\top A^\top A X)+\mu\|Y\|_{1} \\
        \st & \ X=Y,\ X\in\St(n,p).
\end{split}
\end{equation}
Now we see that the nonsmooth function $\|\cdot\|_1$ and the manifold constraint are associated with different variables. Thus, the two difficult terms are separated. Using the Moreau envelope smoothing, the smoothed problem of \eqref{sPCA-split} is given by:
\begin{equation}\label{sPCA-split-smooth}
\begin{split}
        \min_{X,Z} & \ -\frac{1}{2}\Tr(X^\top A^\top A X)+ g_\gamma(Z) \\
        \st & \ X=Z,\ X\in\St(n,p),
\end{split}
\end{equation}
where $g_\gamma(Z):=\min_Y\{\mu\|Y\|_1+\frac{1}{2\gamma}\|Y-Z\|_F^2\}$. The augmented Lagrangian function of \eqref{sPCA-split-smooth} is given by
\[
\mathcal{L}_{\rho,\gamma}(X,Z;\Lambda) = -\frac{1}{2}\Tr(X^\top A^\top A X)+ g_\gamma(Z) + \langle\Lambda,X-Z\rangle+\frac{\rho}{2}\|X-Z\|_F^2.
\]
Therefore, one iteration of our Riemannian ADMM (Algorithm \ref{R_admm_Moreau_grad_step}) for solving \eqref{sPCA-split} reduces to:
\begin{equation}\label{R_admm_Moreau_spca}
\begin{split}
    X^{k+1} := & \ \retr_{X^{k}}(-\eta_{k}\proj_{\T_{X^{k}}\St(n,p)}(-A^\top A X^k+\Lambda^{k}+\rho(X^{k}-Z^{k}))) \\
    Y^{k+1} := & \ \mathrm{prox}_{\frac{\mu(1+\rho\gamma)}{\rho}\|\cdot\|_1}\left(X^{k+1}+\frac{1}{\rho}\Lambda^k\right)\\
    Z^{k+1} := & \ \frac{\gamma}{1 +\gamma\rho}\left(\frac{1}{\gamma }Y^{k+1}+\Lambda^k+\rho X^{k+1}\right) \\
    \Lambda^{k+1} := & \ \Lambda^{k} + \rho (X^{k+1}-Z^{k+1}).
\end{split}
\end{equation}

The ManPG \cite{chen2020proximal} for solving \eqref{sPCA} updates the iterates as follows:
\begin{equation}\label{manpg_spca}
\begin{split}
        V^k  & :=\argmin_{V\in \T_{X^{k}}\St(n, p)} \langle -A^\top A X^k, V\rangle + \frac{1}{2 t}\|V\|^2+\mu\|X^k + V\|_1\\
        X^{k+1} & := \retr_{X^k}(\alpha V^k),
\end{split}
\end{equation}
where $\alpha$ and $t$ are stepsizes. The authors of \cite{chen2020proximal} suggest to solve the $V$ subproblem by using a semi-smooth Newton method. The Riemannian subgradient method (RSG) \cite{ferreira1998subgradient} for solving \eqref{sPCA} updates the iterates as follows:
\begin{equation}\label{rgrad_spca}
    X^{k+1}:= \retr_{X^k}(-\eta_k\proj_{\T_{X^{k}}\St(n, p)}(-A^\top A X^{k} + \mu D^{k})),\ \mbox{ with } D^{k}\in \partial \|X^{k}\|_1.
\end{equation}

We now describe the setup of our numerical experiment. The data matrix $A\in\R^{m\times n}$ is generated randomly whose entries follow the standard Gaussian distribution. We choose $\mu$ from $\{0.5,0.7,1\}$, $n$ from $\{100, 300, 500\}$ with $m=n$, and $p$ from $\{50, 100\}$. In our Riemannian ADMM, we set $\gamma=10^{-8}$, $\rho=10^{2}$ and $\eta_{k}=\eta=10^{-2}$. The code of ManPG is downloaded from the authors' website of \cite{chen2020proximal} and default settings of the parameters are used. In RSG \eqref{rgrad_spca}, we set the stepsize $\eta_{k}=\eta=10^{-2}$ as a result of a simple grid search. For all three algorithms, we terminate them when the change of the objective function in two consecutive iterations is smaller than $10^{-8}$,
which means 
\[
|F(X^{k+1})-F(X^{k})|<10^{-8}
\]
for ManPG \eqref{manpg_spca}, RSG \eqref{rgrad_spca} and RADMM \eqref{R_admm_Moreau_spca}, where $F(X):=-\frac{1}{2}\Tr(X^\top A^\top A X)+\mu\|X\|_{1}$. Moreover, we also terminate the three algorithms when the maximal iteration number, which is set $1000$, is reached. For different combinations of $\mu$, $n$ and $p$, we report the objective value ``obj'' ($F(X^k)$ for ManPG and RSG, and $F(Y^k)$ for RADMM), CPU time and the sparsity of the solution ``Spa'' in Table \ref{table_spca_sparsity}. Here the ``sparsity'' is the percentage of the zero entries of the iterate ($X^k$ for ManPG and RSG, and $Y^k$ for RADMM). Moreover, note that $Y^k$ in RADMM \eqref{R_admm_Moreau_spca} is not on the Stiefel manifold, we thus report the constraint violation ``infeas'', which is defined as $\|(Y^k)^\top Y_k - I_p\|_{F}$, in Table \ref{table_spca_sparsity} for RADMM. From Table \ref{table_spca_sparsity} we have the following observations: (i) both ManPG and RADMM generated sparse solutions especially when $\mu$ is large, while RSG is not capable of generating sparse solutions; (ii) RSG is very slow. It cannot decrease the objective value to the same level as ManPG and RADMM; (iii) RADMM is always faster than ManPG, sometimes is about 10 to 20 times faster. (iv) In most cases, RADMM yields iterates with better objective value than ManPG, and although $Y^k$ generated by RADMM is not on the Stiefel manifold, the constraint violation is small -- usually in the order of $10^{-6}$-$10^{-8}$.

\begin{table}[ht]
\begin{center}
\begin{small}
\begin{tabular}{c  c | c  c  c | c  c  c | c  c  c  c }
 \hline
 \multicolumn{2}{c|}{Settings} & \multicolumn{3}{c|}{RSG} & \multicolumn{3}{c|}{ManPG} & \multicolumn{4}{c}{RADMM} \\
 \hline
 $\mu$ & $(n, p)$ & obj & CPU & Spa & obj & CPU & Spa & obj & CPU & Spa & infeas \\
 \hline
 \multirow{4}{*}{0.5} & $(300, 50)$ & 23.9783 & 0.5725 & 0 & 6.1015 & 1.6808 & 0.9964 & 6.0794 & 0.3550 & 0.9965 & 1.14e-6 \\
  & $(300, 100)$ & 44.9207 & 1.4091 & 0 & 9.9683 & 16.9343 & 0.9966 & 9.4524 & 1.0113 & 0.9964 & 4.43e-6 \\
  & $(500, 50)$ & 34.8607 & 1.1545 & 0 & 4.8868 & 1.7355 & 0.9977 & 4.7141 & 0.8379 & 0.9980 & 7.07e-8 \\
  & $(500, 100)$ & 72.1180 & 2.2447 & 0 & 12.0830 & 15.4234 & 0.9980 & 11.7489 & 1.5738 & 0.9980 & 1.00e-7 \\
 \hline
 \multirow{4}{*}{0.7} & $(300, 50)$ & 50.0266 & 0.5584 & 0 & 14.9053 & 1.7990 & 0.9965 & 14.9497 & 0.2860 & 0.9967 & 9.90e-8 \\
  & $(300, 100)$ & 99.1306 & 1.4196 & 0 & 29.0171 & 16.7438 & 0.9966 & 28.9101 & 0.8185 & 0.9967 & 1.40e-7 \\
  & $(500, 50)$ & 73.4292 & 1.1515 & 0 & 14.3927 & 1.9293 & 0.9978 & 14.2181 & 0.7760 & 0.9980 & 9.90e-8 \\
  & $(500, 100)$ & 147.0228 & 2.2224 & 0 & 29.8765 & 16.9296 & 0.9980 & 29.6908 & 1.2075 & 0.9980 & 1.40e-7 \\
 \hline
 \multirow{4}{*}{1.0} & $(300, 50)$ & 99.5018 & 0.5593 & 0 & 29.4374 & 2.2295 & 0.9967 & 29.6217 & 0.1879 & 0.9967 & 1.41e-7 \\
  & $(300, 100)$ & 202.9473 & 1.4154 & 0 & 61.5334 & 16.0349 & 0.9965 & 61.0310 & 0.5699 & 0.9967 & 2.00e-7 \\
  & $(500, 50)$ & 149.1125 & 1.1564 & 0 & 30.5119 & 1.8004 & 0.9980 & 30.4099 & 0.4336 & 0.9980 & 1.41e-7 \\
  & $(500, 100)$ & 295.5895 & 2.2384 & 0 & 59.5210 & 18.3017 & 0.9980 & 59.5309 & 1.0377 & 0.9980 & 2.00e-7 \\
 \hline
\end{tabular}
\end{small}
\end{center}
\caption{Comparison of RSG \eqref{rgrad_spca}, ManPG \eqref{manpg_spca}, and RADMM \eqref{R_admm_Moreau_spca} for solving \eqref{sPCA}. The results are averaged for 10 repeated experiments with random initializations.}
\label{table_spca_sparsity}
\end{table}

To better illustrate the behavior of the three algorithms, we further draw some figures in Figure \ref{fig:spca_cpu_time}, to show how the objective function value decreases along with the CPU time. Here and in all figures presented later, we use the minimum function value returned by all tested algorithms as $f^*$. From Figure \ref{fig:spca_cpu_time} we can clearly see that RGS quickly stops decreasing the objective value, while both ManPG and RADMM can decrease the objective value to a much lower level. Moreover, RADMM is much faster than ManPG.

\begin{figure}[t!]
\begin{center}
\subfigure[$(n, p)=(300,50)$]{\includegraphics[clip, trim=1.5cm 6cm 2cm 6cm,width=0.45\columnwidth]{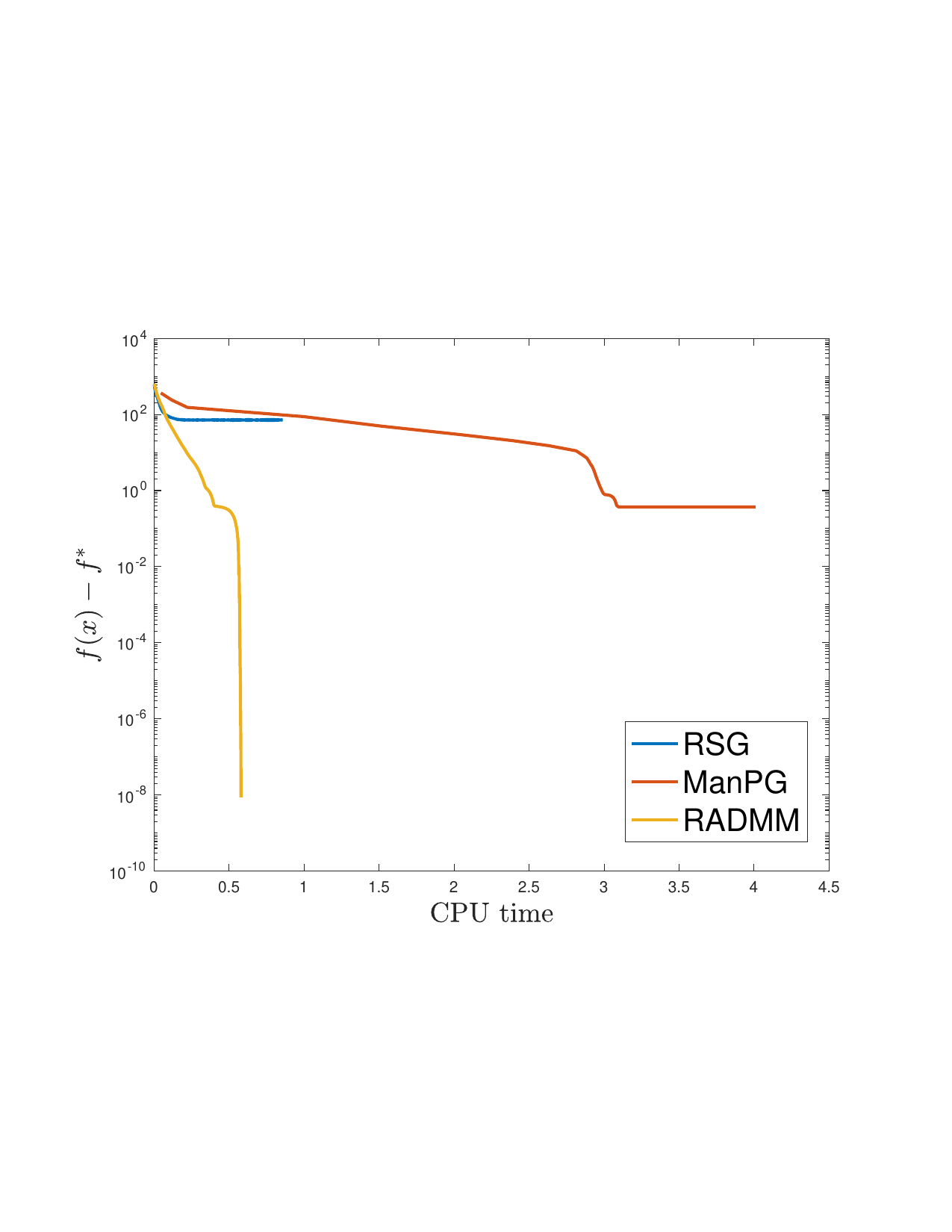}}
\subfigure[$(n,p)=(300,100)$]{\includegraphics[clip, trim=1.5cm 6cm 2cm 6cm,width=0.45\columnwidth]{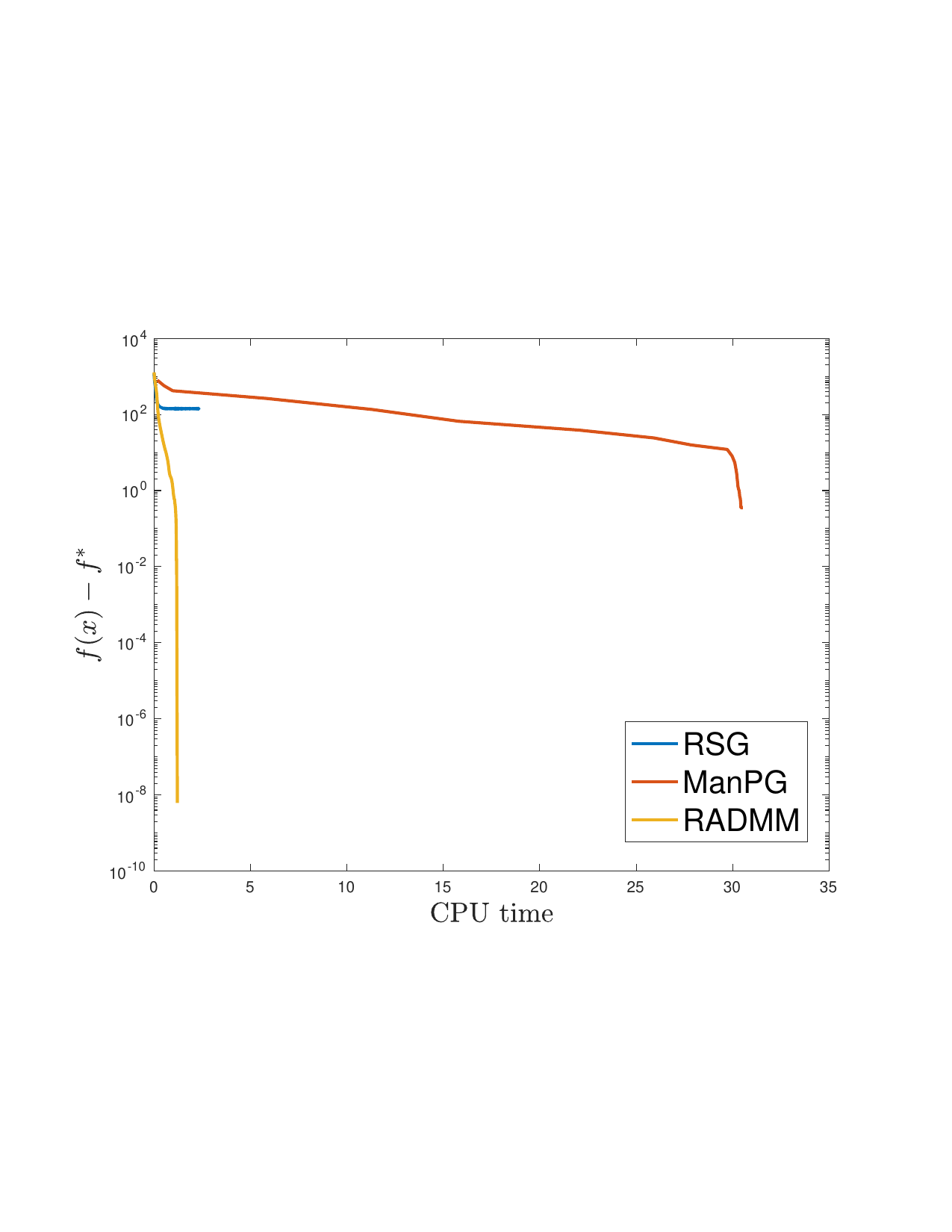}}

\subfigure[$(n, p)=(500,50)$]{\includegraphics[clip, trim=1.5cm 6cm 2cm 6cm,width=0.45\columnwidth]{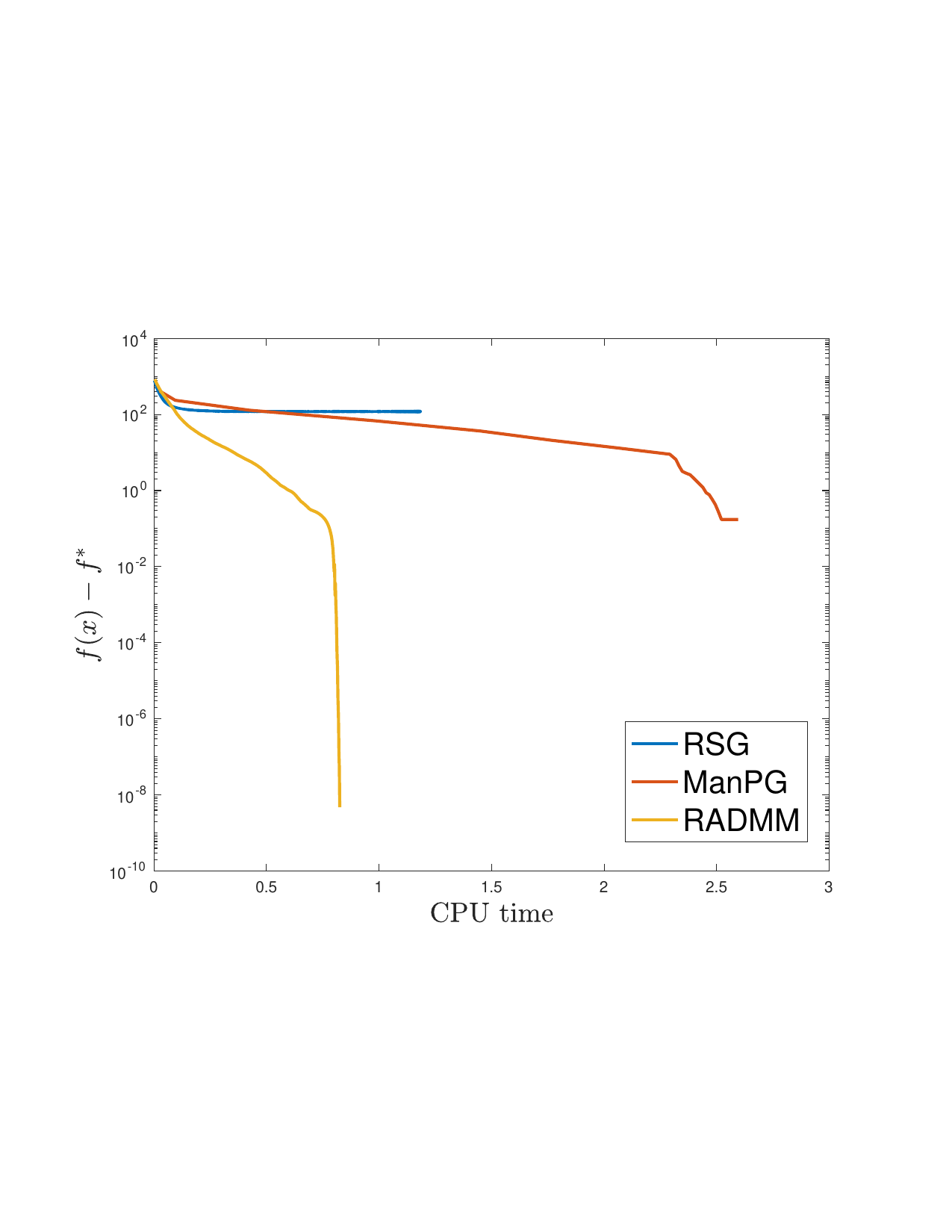}}
\subfigure[$(n, p)=(500,100)$]{\includegraphics[clip, trim=1.5cm 6cm 2cm 6cm,width=0.45\columnwidth]{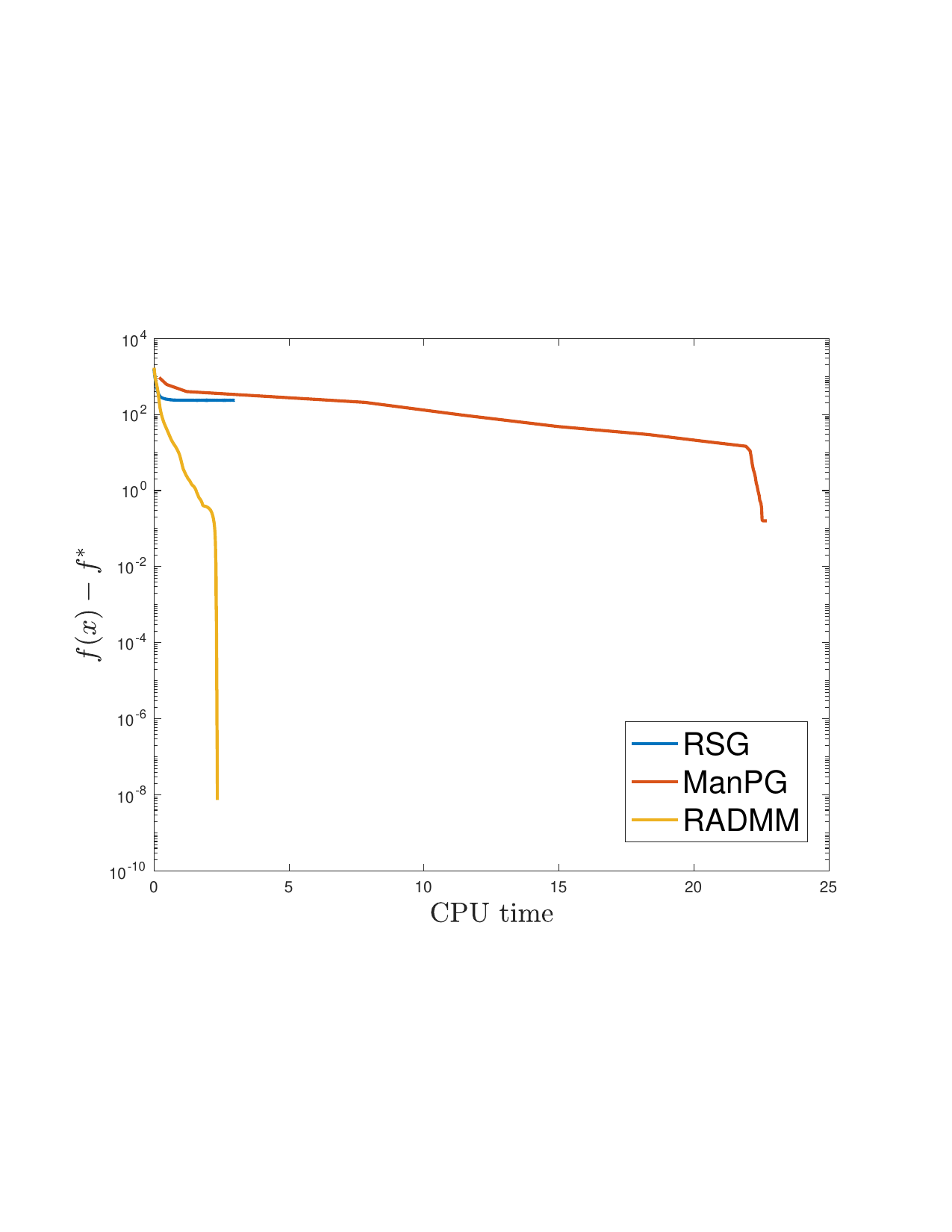}}

\caption{Comparison of the CPU time (in seconds) consumed among the ManPG, RADMM and Riemannian gradient methods for solving \eqref{sPCA} with $\mu=1$. Each figure is averaged for 10 repeated experiments with random initializations.}
\label{fig:spca_cpu_time}
\end{center}
\end{figure}

We also compare our RADMM \eqref{R_admm_Moreau_spca} with SOC~\cite{lai2014splitting} and MADMM~\cite{kovnatsky-madmm-2016}. Before we present the numerical comparisons, we remind the reader that there is no convergence guarantee for SOC and MADMM. The SOC \eqref{soc} algorithm for solving problem \eqref{sPCA} actually solves the following equivalent problem:
\begin{equation}\label{sPCA_split2}
\begin{split}
        \min_{X, Y} & \ -\frac{1}{2}\mathrm{Tr}(X^\top A^\top A X) + \mu\|X\|_{1} \\
        \st & \ X=Y,\ Y\in\St(n,p).
\end{split}
\end{equation}
The SOC iterates as follows.
\begin{equation}\label{soc_spca}
\begin{split}
        & X^{k+1} := \ \argmin_X -\frac{1}{2}\mathrm{Tr}(X^\top A^\top A X) + \mu\|X\|_{1}+\langle\Lambda^k,X-Y^k\rangle +\frac{\rho}{2}\|X - Y^{k}\|_{F}^2 \\
        & Y^{k+1} := \ \argmin_{Y\in\St(n,p)} \langle\Lambda^k,X^{k+1}-Y\rangle + \frac{\rho}{2} \|X^{k+1}-Y\|_{F}^2 \\
        &\Lambda^{k+1} := \ \Lambda^{k} + \rho( X^{k+1} - Y^{k+1}).
\end{split}
\end{equation}
In our numerical experiment, we chose to solve the $X$-subproblem using the  proximal gradient method. The MADMM \eqref{madmm} solves \eqref{sPCA-split}, and iterates as follows:
\begin{equation}\label{madmm_spca}
\begin{split}
        & X^{k+1} := \ \argmin_{X\in\St(n,p)} -\frac{1}{2}\mathrm{Tr}(X^\top A^\top A X)+\langle\Lambda^k,X-Y^k\rangle+\frac{\rho}{2}\|X - Y^{k}\|_{F}^2 \\
        & Y^{k+1} := \ \argmin_Y \mu\|Y\|_{1} + \langle\Lambda^k,X^{k+1}-Y\rangle +\frac{\rho}{2}\|X^{k+1}-Y\|_{F}^2 \\
        &\Lambda^{k+1} := \ \Lambda^{k} + \rho(X^{k+1} - Y^{k+1}).
\end{split}
\end{equation}
In our numerical experiment, we chose to solve the $X$-subproblem using a Riemannian gradient method.

We test our RADMM with SOC and MADMM with the following parameters: for SOC we set $\rho=50$ and $\eta=10^{-2}$, where $\eta$ is the stepsize for the proximal gradient method for solving the $X$-subproblem; for MADMM we set $\rho=100$ and $\eta=10^{-2}$, where $\eta$ is the stepsize for the Riemannian gradient method for solving the $X$-subproblem; for RADMM we set  $\rho=100$, $\eta=10^{-2}$ and $\gamma=10^{-8}$. The parameters are obtained via simple grid searches, also we randomly initialize three algorithms at the same starting point. For all the three algorithms we record the function value and sparsity for the sequence on the manifold, i.e. $X^k$ for MADMM and RADMM, and $Y^k$ for SOC. For each algorithm, we terminate after 100 iterations. We present the function value change curve in Figures \ref{fig:spca_iter2} and \ref{fig:spca_cpu_time2}. We also report the objective function values of the outputs (denoted as ``obj''), the sparsity (the percentage of zero entries, denoted as ``Spa'') and the constraint violation ($\|X^k-Y^k\|_F$ for all three algorithms, denoted as ``infeas'') in Table \ref{table_spca_sparsity2}.
From Figure \ref{fig:spca_iter2} we can see that SOC is more efficient in terms of the iteration number, but from Figure \ref{fig:spca_cpu_time2} we see that RADMM is more efficient in terms of the CPU time. This is exactly because all steps in our RADMM are very easy to compute, and so the per-iteration complexity is very cheap. 

\begin{figure}[t!]
\begin{center}

\setcounter{subfigure}{0}
\subfigure[$(n,p)=(300,50)$]{\includegraphics[clip, trim=1.5cm 6cm 1.5cm 6cm,width=0.44\columnwidth]{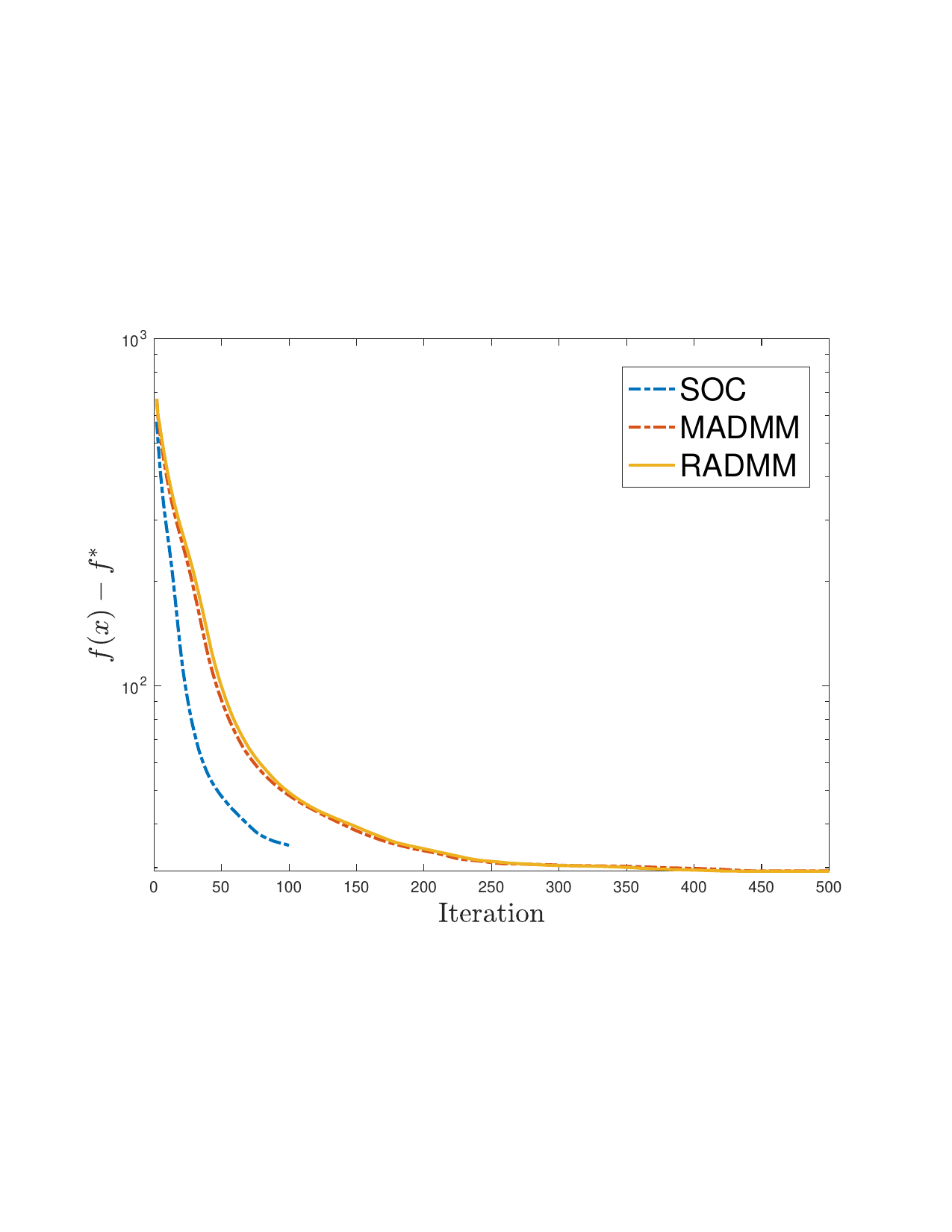}}
\subfigure[$(n,p)=(300,100)$]{\includegraphics[clip, trim=1.5cm 6cm 1.5cm 6cm,width=0.44\columnwidth]{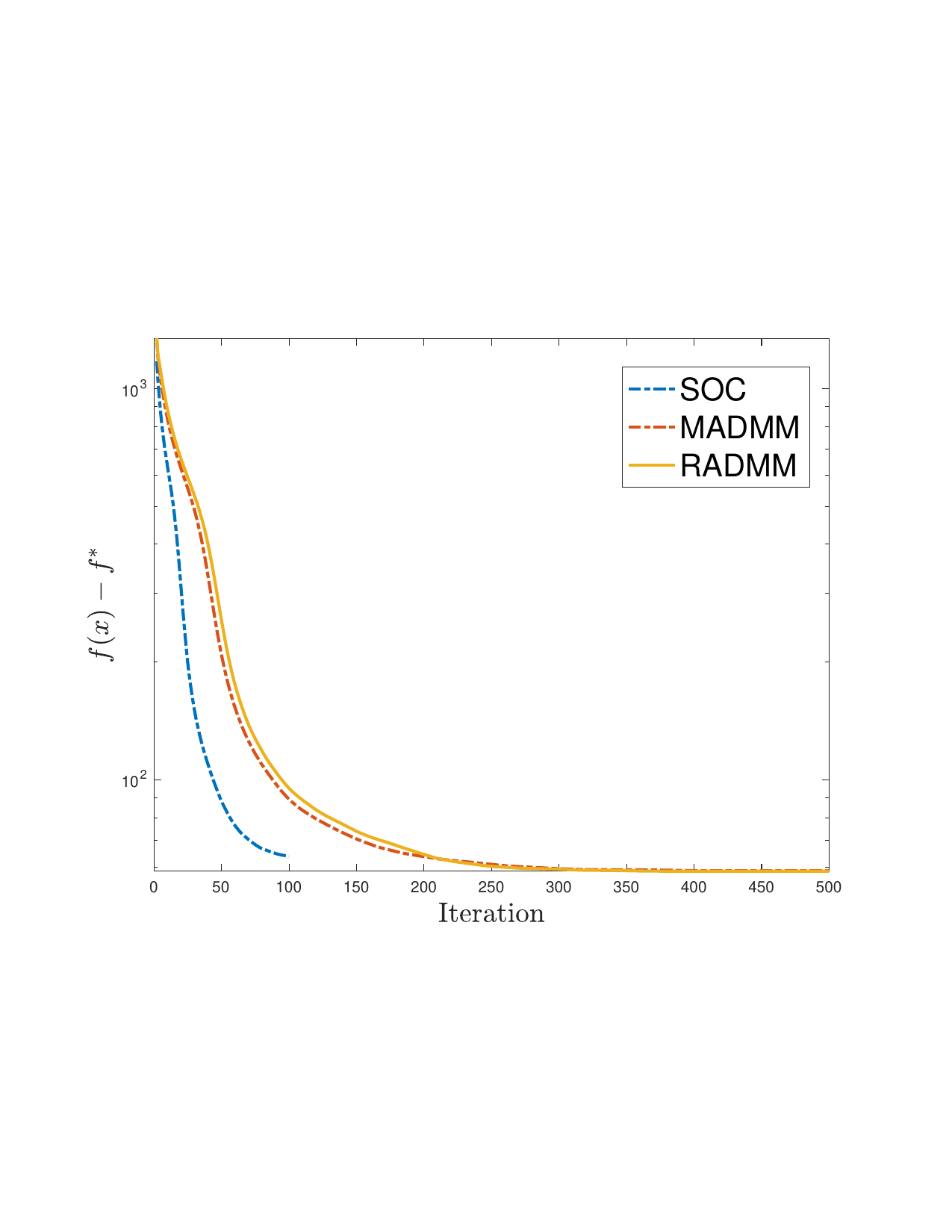}}
\subfigure[$(n,p)=(500,50)$]{\includegraphics[clip, trim=1.5cm 6cm 1.5cm 6cm,width=0.44\columnwidth]{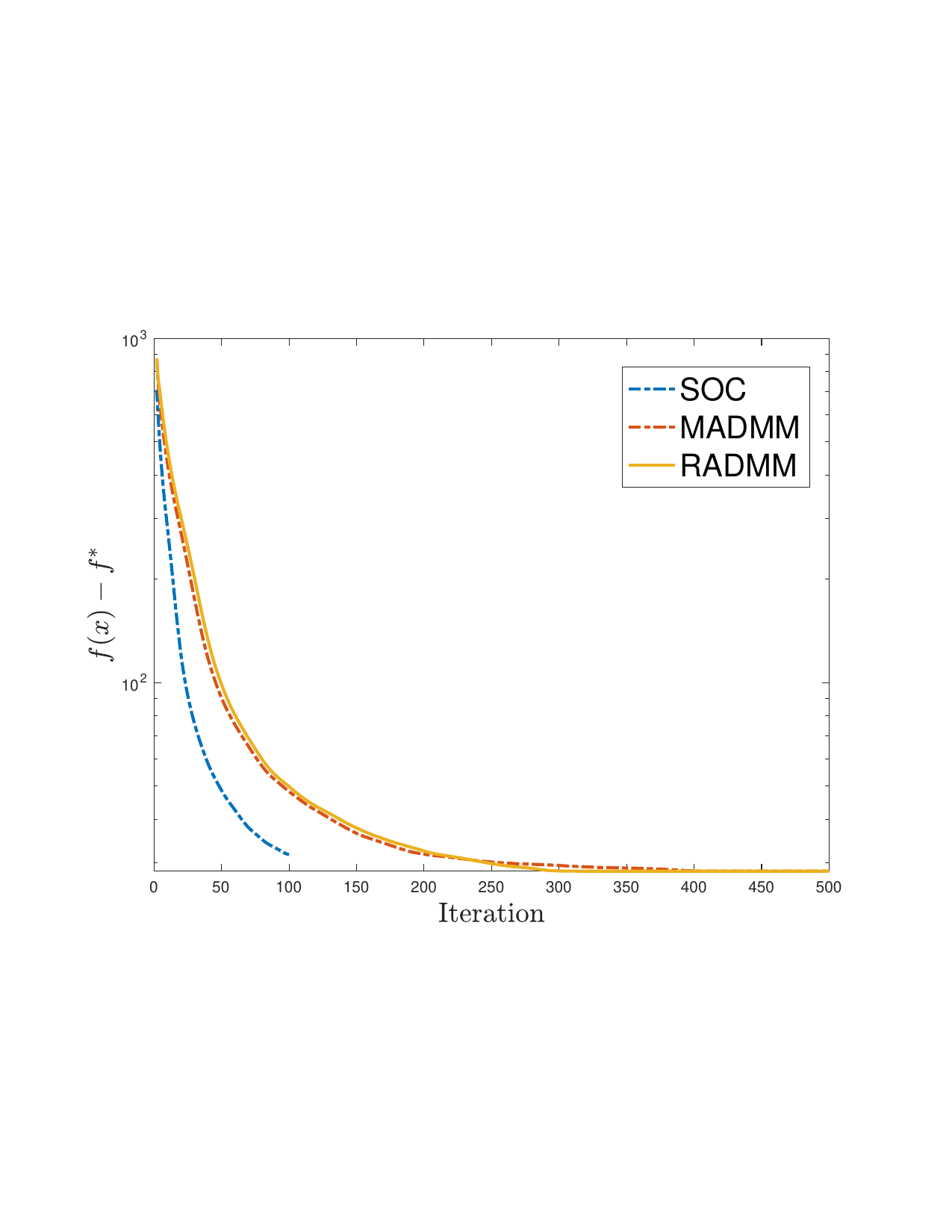}}
\subfigure[$(n, p)=(500,100)$]{\includegraphics[clip, trim=1.5cm 6cm 1.5cm 6cm,width=0.44\columnwidth]{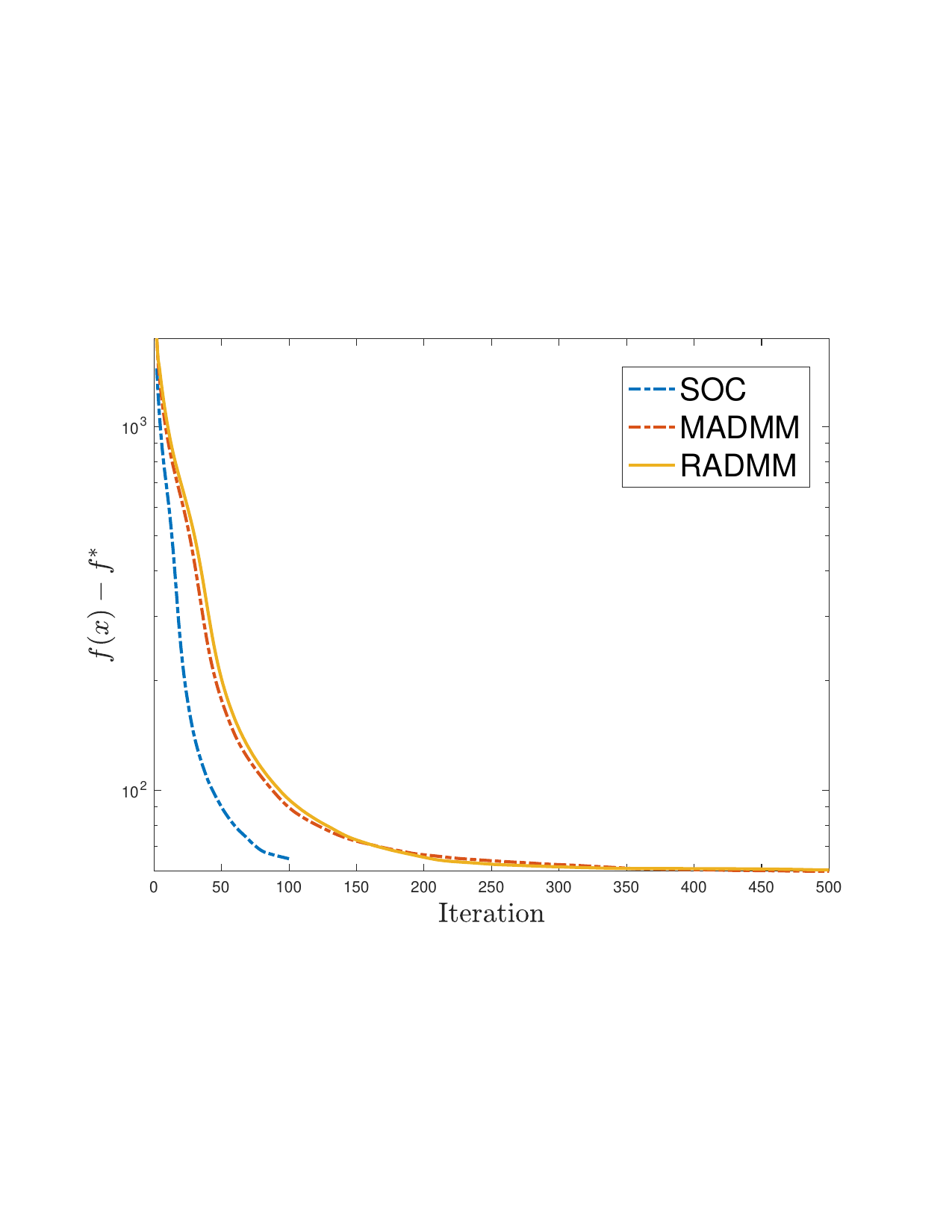}}

\caption{Comparison of SOC, MADMM and RADMM for solving \eqref{sPCA} with $\mu=1$ and with respect to iteration numbers. Each figure is averaged for 10 repeated experiments with random initializations.}
\label{fig:spca_iter2}
\end{center}
\end{figure}

\begin{figure}[t!]
\begin{center}
\setcounter{subfigure}{0}
\subfigure[$n=300, p=50$]{\includegraphics[clip, trim=1.5cm 6cm 1.5cm 6cm,width=0.44\columnwidth]{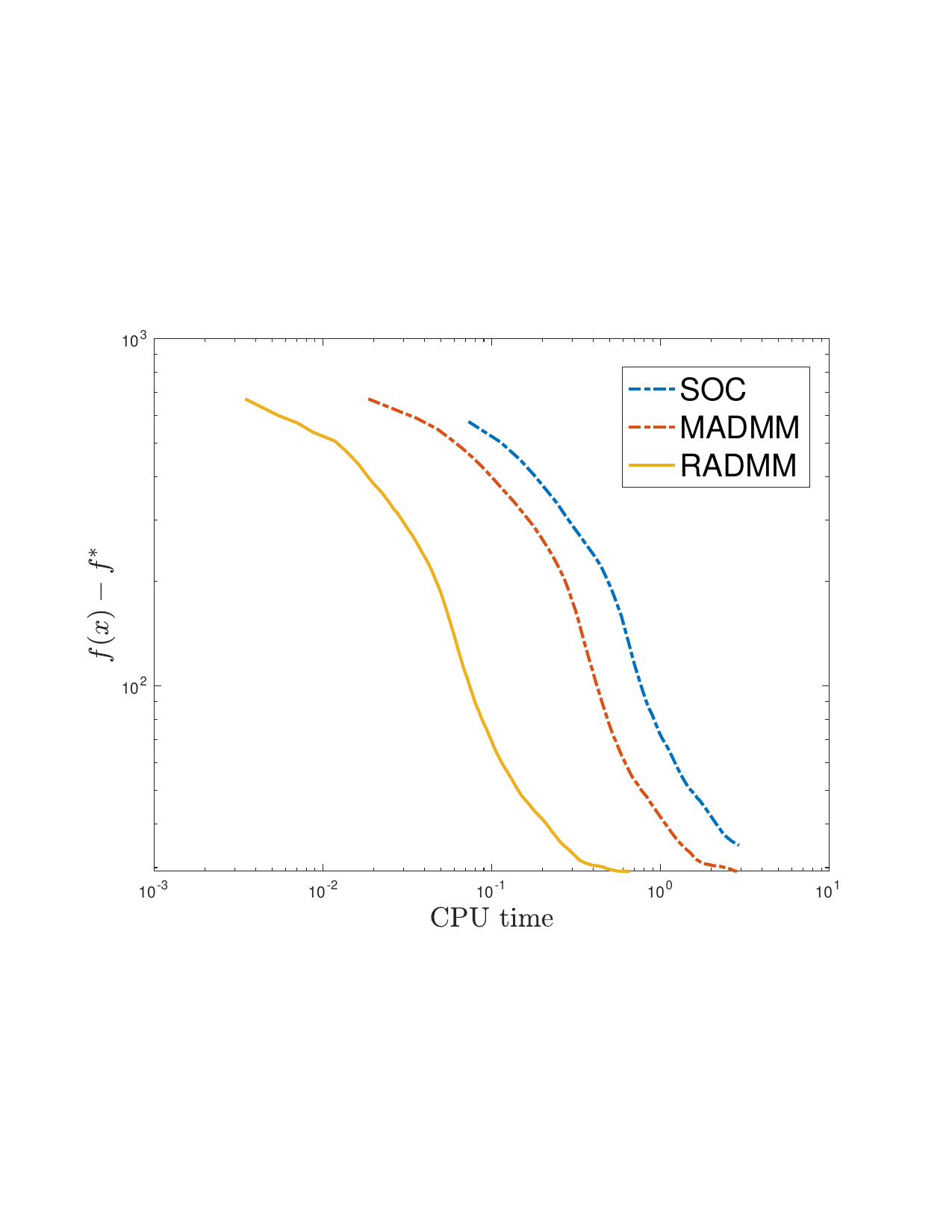}}
\subfigure[$n=300, p=100$]{\includegraphics[clip, trim=1.5cm 6cm 1.5cm 6cm,width=0.44\columnwidth]{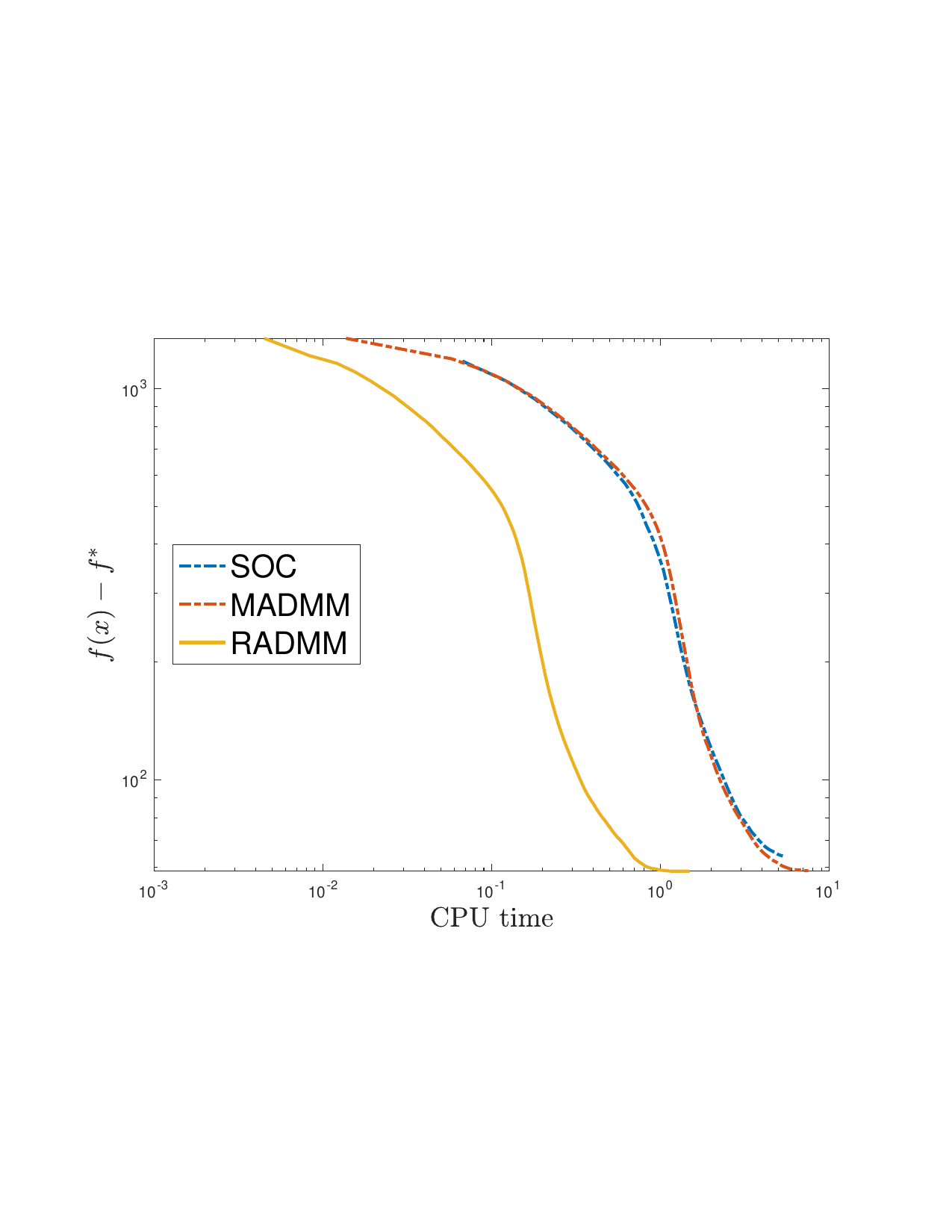}}
\subfigure[$n=500, p=50$]{\includegraphics[clip, trim=1.5cm 6cm 1.5cm 6cm,width=0.44\columnwidth]{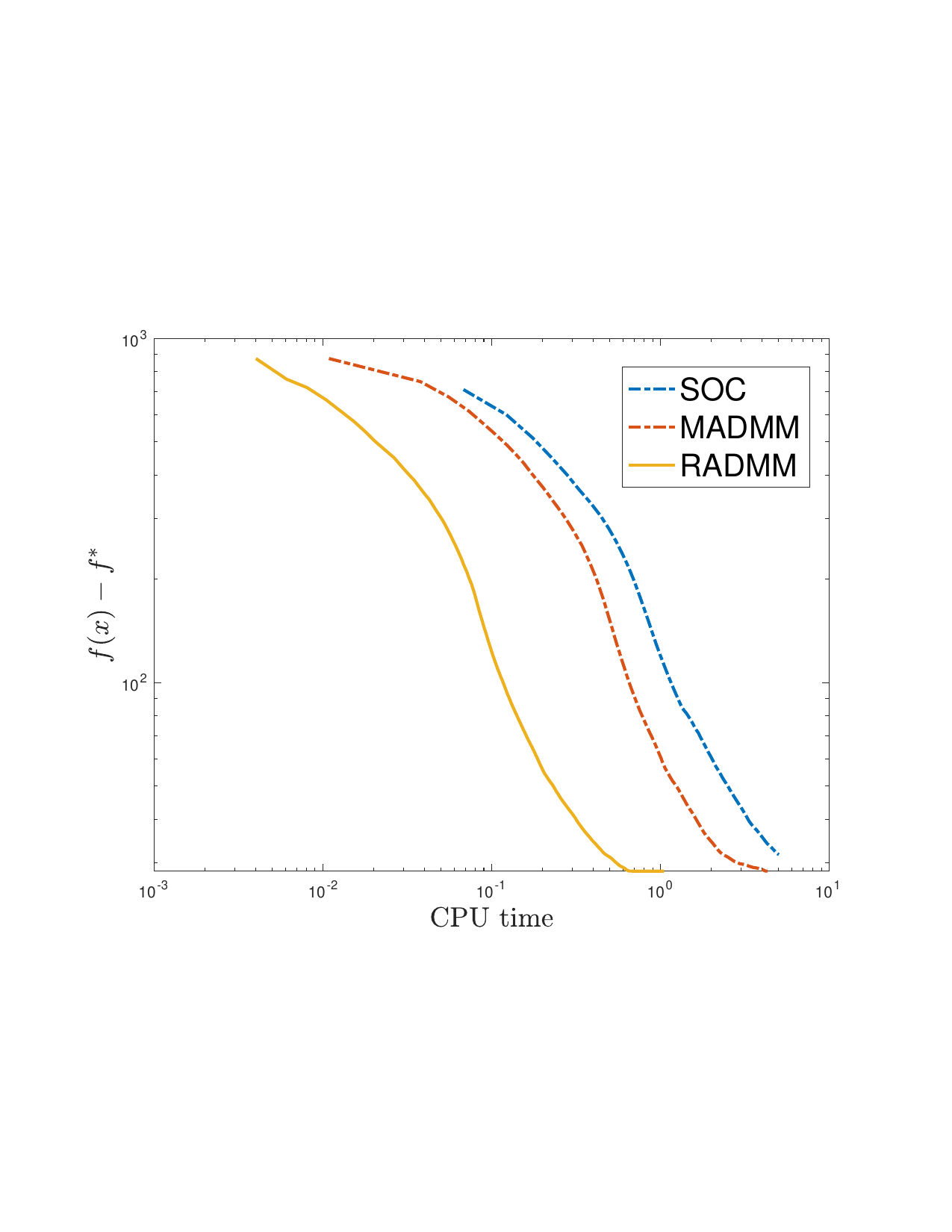}}
\subfigure[$n=500, p=100$]{\includegraphics[clip, trim=1.5cm 6cm 1.5cm 6cm,width=0.44\columnwidth]{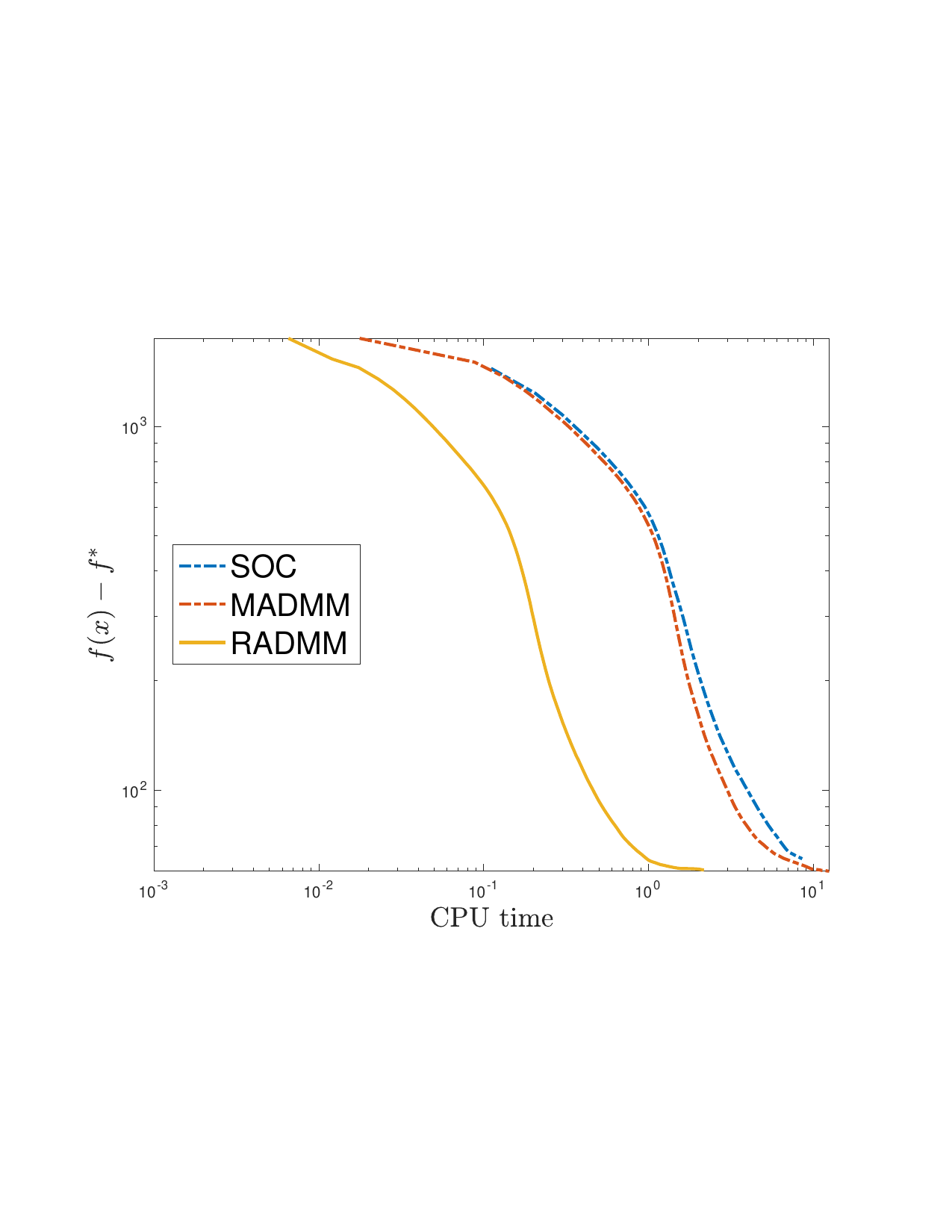}}

\caption{Comparison of SOC, MADMM and RADMM for solving \eqref{sPCA} with $\mu=1$ and with respect to CPU time consumed. Each figure is averaged for 10 repeated experiments with random initializations.}
\label{fig:spca_cpu_time2}
\end{center}
\end{figure}

\begin{table}[ht]
\begin{center}
\begin{small}
\begin{tabular}{ c | c  c  c | c  c  c | c  c  c }
 \hline
 Settings & \multicolumn{3}{c|}{SOC} & \multicolumn{3}{c|}{MADMM} & \multicolumn{3}{c}{RADMM} \\
 \hline
 $(n, p)$ & obj & Spa & infeas & obj & Spa & infeas & obj & Spa & infeas \\
 \hline
 $(300, 50)$ & 34.8851 & 0.7609 & 0.0060 & 29.2059 & 0.9967 & 0.0000 & 29.1197 & 0.9967 & 0.0000 \\
 $(300, 100)$ & 66.6870 & 0.6018 & 0.0072 & 59.6483 & 0.9967 & 0.0000 & 59.8210 & 0.9967 & 0.0000 \\
 $(500, 50)$ & 32.7199 & 0.8819 & 0.0040 & 29.4007 & 0.9980 & 0.0000 & 29.5003 & 0.9742 & 0.0000 \\
 $(500, 100)$ & 67.2337 & 0.7558 & 0.0082 & 59.7878 & 0.9977 & 0.0000 & 59.4491 & 0.9980 & 0.0000 \\
 \hline
\end{tabular}
\end{small}
\end{center}
\caption{Comparison of SOC, MADMM and RADMM for solving \eqref{sPCA} with $\mu=1$. The results are averaged for 10 repeated experiments with random initializations.}
\label{table_spca_sparsity2}
\end{table}

\subsection{Numerical Experiments on ODL and DPCP}
In this section, we test Algorithm \ref{R_admm_Moreau_grad_step} on the DPCP problem \eqref{DPCP-matrix}, which can be equivalently written as:
\begin{equation}\label{dpcp_split}
\begin{split}
        \min_{X, W} & \ \|W\|_1 \\
        \st, & \ W = Y^\top X,\ X\in\St(n,p).
\end{split}
\end{equation}
Simple calculation shows that Algorithm \ref{R_admm_Moreau_grad_step} for the DPCP problem \eqref{DPCP-matrix} iterates as follows.
\begin{equation}\label{R_admm_Moreau_dpcp}
\begin{split}
    &X^{k+1} := \retr_{X^{k}}(-\eta_{k}\proj_{T_{X^{k}}\St(n,p)}(Y\Lambda^{k}+\rho Y( Y^\top X^{k}-Z^{k}))) \\
    &W^{k+1}:= \mathrm{prox}_{\frac{1+\rho\gamma }{\rho}\|\cdot\|_1}(Y^\top X^{k+1}+\frac{1}{\rho}\Lambda^k)\\
    &Z^{k+1}:= \frac{1}{1/\gamma +\rho}\left(\frac{1}{\gamma }W^{k+1}+\Lambda^k+\rho Y^\top X^{k+1}\right) \\
    &\Lambda^{k+1} := \Lambda^{k} + \rho (Y^\top X^{k+1}-Z^{k+1}).
\end{split}
\end{equation}

We compare the RADMM with iteratively reweighted least squares (IRLS)~\cite{Lerman-RSR-IRLS-2018,Tsakiris-Vidal-2018}, projected subgradient method (PSGM)~\cite{zhu2018dual} and manifold proximal point algorithm (ManPPA)~\cite{chen2019manifold}. Note that the objective of the problem:
\begin{equation}\label{dpcp_again}
\begin{split}
        &\min_{X}\ F(X):=\|Y^\top X\|_1 \\
        &\text{s.t. }\ X\in\St(n,p)=\{X\in\R^{n\times p}|X^\top X = I_{p}\}.
\end{split}
\end{equation}
is separable column-wisely:
\begin{equation}\label{dpcp_columnwise}
\begin{split}
        &\min_{x_1,...,x_p}\ \sum_{i=1}^{p}\|Y^\top x_i\|_1 \\
        &\text{s.t. $\{x_1,...,x_p\}$ is orthonormal set}.
\end{split}
\end{equation}
PSGM and ManPPA conduct the minimization column-wisely. Therefore, in our experiment, we can only record the function value at the outputs of PSGM and ManPPA. Meanwhile the IRLS algorithm that we implemented here is a variant of the original column-wise algorithm for solving \eqref{DPCP-matrix} \cite{Lerman-RSR-IRLS-2018,Tsakiris-Vidal-2018,tsakiris2015dual}. IRLS iterates as follows: first we find the initialization by  $X^0:=\argmin_{X\in\St(n, p)}\|Y^\top X\|_F^2$ and then the iterate is updated by
\begin{equation}\label{IRLS}
    X^{k+1}\leftarrow \argmin_{X\in\St(n, p)}\sum_{i}\|X^\top Y_i\|_2^2 / \max\{\delta, \|(X^{k})^\top Y_i\|_2\}.
\end{equation}

We follow the same experiment setting as \cite{chen2019manifold}. More specifically, we construct the data to be $Y= [SR, O]$, $S\in\R^{n\times d}$ with orthogonal column vectors, $R\in\R^{d\times p_1}$, $O\in\R^{N\times p_2}$ both with random Gaussian entries. Here $p_1$ and $p_2$ are the numbers of inliers and outliers respectively as described in \cite{chen2020proximal}. In our experiment we set $p = 5$, $p_1=500$ and $p_2=1167$, with different choice of $n$. For our RADMM algorithm we set $\rho=40$, $\gamma=4\cdot 10^{-9}$, $\eta=2\cdot 10^{-4}$. For other algorithms, we use their default parameter settings from the papers \cite{chen2019manifold,zhu2018dual,Tsakiris-Vidal-2018}. For all the algorithm, we terminate them if the difference between two consecutive function values is smaller than $10^{-6}$, 
i.e.
\[
|F(X^{k+1})-F(X^{k})|<10^{-6}.
\]
We initialize IRLS and RADMM with the same initial point as in \cite{zhu2018dual}. Note that PSGM and ManPPA sequentially solves the column-wise problems, and therefore they do not need the initial point to be on the Stiefel manifold. In Figure \ref{fig:dpcp_theta}, we show how the objective function value changes along with the CPU time. We also record the CPU time and final objective function value in Table \ref{table_dpcp}. For RADMM, we also include the constraint violation (i.e. $\|W^k - Y^\top X^k\|_F$, denoted as ``infeas'' in the table) in Table \ref{table_dpcp}. It can be seen from Figure \ref{fig:dpcp_theta} and Table \ref{table_dpcp} that RADMM outputs the other three algorithms in terms of the objective function value.

\begin{figure}[t!]
\begin{center}
\subfigure[$(n, p)=(30, 5)$]{\includegraphics[clip, trim=1.5cm 6cm 1.5cm 7cm, width=0.44\columnwidth]{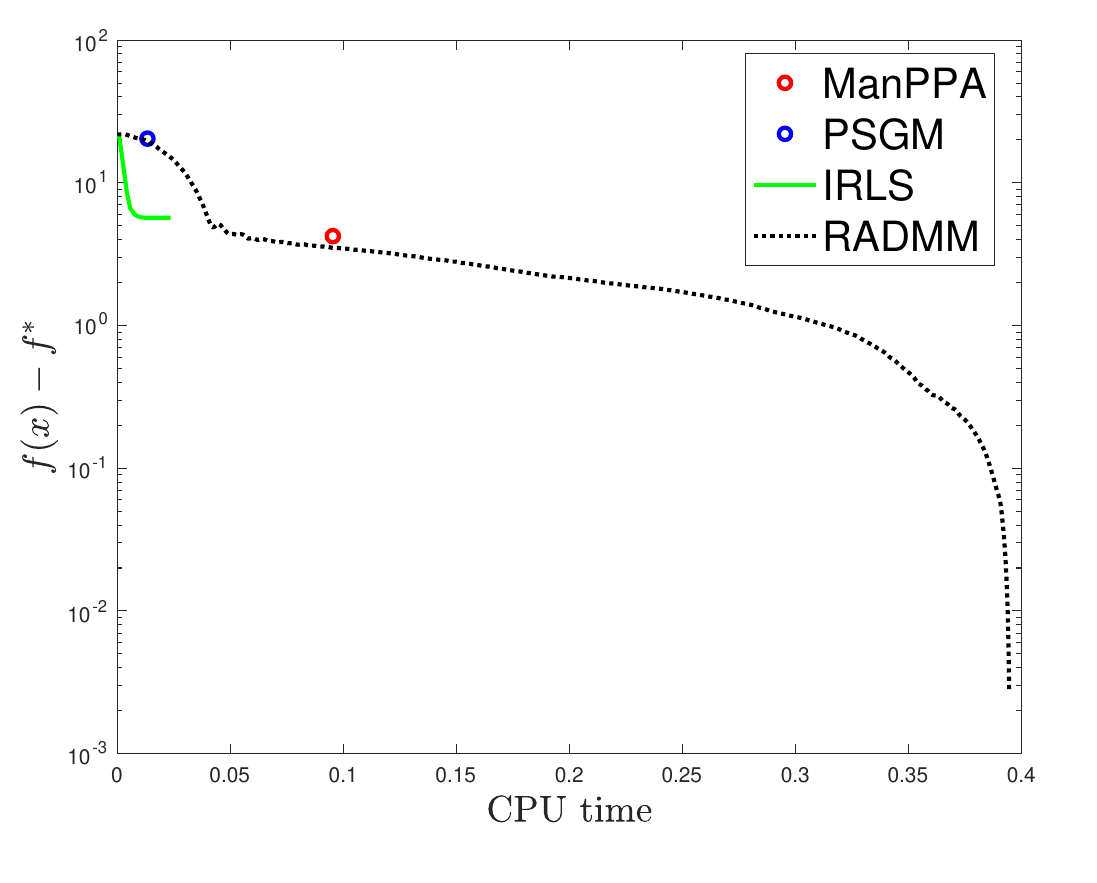}}
\subfigure[$(n, p)=(50, 5)$]{\includegraphics[clip, trim=1.5cm 6cm 1.5cm 7cm, width=0.44\columnwidth]{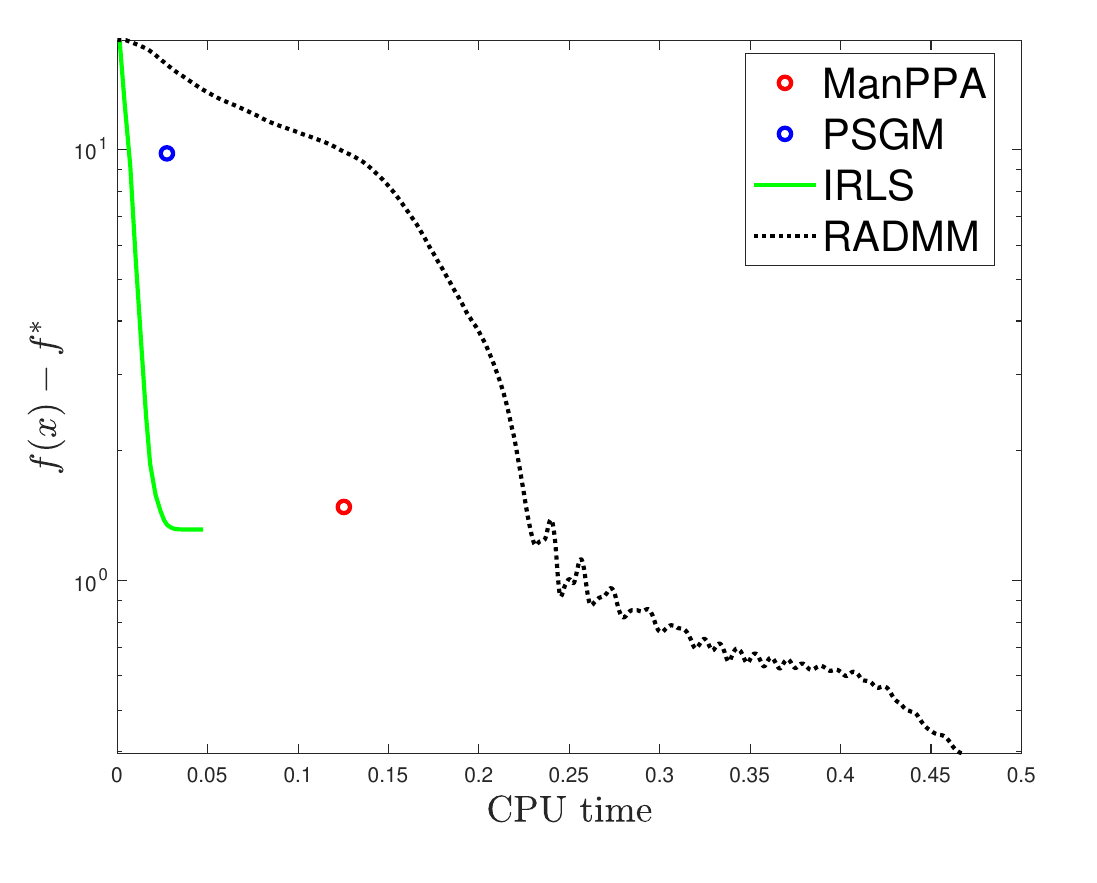}}

\caption{Function value $\|Y^\top X^{k}\|_1$ versus CPU time. In this experiment we set $n\in\{30, 50\}$, $p=5$, and we set the number of Inlier $p_1=500$ and Outlier $p_2=1167$. The experiments are repeated and averaged for 10 times with random initialization.}
\label{fig:dpcp_theta}
\end{center}
\end{figure}

\begin{table}[ht]
\begin{center}
\begin{small}
\begin{tabular}{c | c  c | c  c | c  c | c  c  c}
 \hline
 Settings & \multicolumn{2}{c|}{PSGM} & \multicolumn{2}{c|}{IRLS} & \multicolumn{2}{c|}{ManPPA} & \multicolumn{3}{c}{RADMM} \\
 \hline
 $(n, p)$ & obj & CPU & obj & CPU & obj & CPU & obj & CPU & vio \\
 \hline
  $(30, 5)$ & 180.59 & 0.0131 & 177.66 & 0.0230 & 177.90 & 0.1164 & 173.28 & 0.3177 & 0.0003 \\
  $(50, 5)$ & 141.66 & 0.0215 & 142.61 & 0.0404 & 138.78 & 0.1820 & 136.62 & 0.3971 & 0.0007 \\
  $(70, 5)$ & 125.94 & 0.0429 & 118.97 & 0.0881 & 119.50 & 0.3532 & 116.39 & 0.4526 & 0.0074 \\
 \hline
\end{tabular}
\end{small}
\end{center}
\caption{Summary of function value, CPU time (seconds) of proposed RADMM Algorithm \eqref{R_admm_Moreau_dpcp}, comparing with PSGM~\cite{zhu2018dual}, IRLS~\cite{Lerman-RSR-IRLS-2018,tsakiris2015dual} and ManPPA~\cite{chen2019manifold} algorithm. The results are averaged for 10 repeated experiments with random generated data. In this experiment we set $p_1=500$ and $p_2=1167$.}
\label{table_dpcp}
\end{table}

We also compare our RADMM \eqref{R_admm_Moreau_dpcp} with SOC~\cite{lai2014splitting} and MADMM~\cite{kovnatsky-madmm-2016}. The SOC \eqref{soc} algorithm for problem \eqref{DPCP-matrix} actually solves the following equivalent problem:
\[
\begin{split}
        \min_{X, W} & \ \|Y^\top X\|_1 \\
        \st, & \ X = W,\ W\in\St(n,p),
\end{split}
\]
and it iterates as:
\begin{equation}\label{soc_dpcp}
\begin{split}
        X^{k+1} := & \ \argmin_X \|Y^\top X\|_{1}+\langle\Lambda^k,X-W^k\rangle +\frac{\rho}{2}\|X - W^{k}\|_{F}^2 \\
        W^{k+1} := & \ \argmin_{W\in\St(n,p)} \langle\Lambda^k,X^{k+1}-W\rangle +\frac{\rho}{2}\|X^{k+1} - W\|_{F}^2 \\
        \Lambda^{k+1} := & \ \Lambda^{k} + \rho(X^{k+1} - W^{k+1}).
\end{split}
\end{equation}
In our experiment, we chose to solve the $X$-subproblem by a subgradient method~\cite{beck2017first}. MADMM \eqref{madmm} solves \eqref{dpcp_split}, and updates the iterates as follows:
\begin{equation}\label{madmm_dpcp}
\begin{split}
        X^{k+1} := & \ \argmin_{X\in\St(n,p)} \langle \Lambda^k,Y^\top X-W^k\rangle + \frac{\rho}{2}\|Y^\top X - W^{k} \|_{F}^2 \\
        W^{k+1} := & \ \argmin_W \|W\|_{1} +\langle \Lambda^k,Y^\top X^{k+1}-W\rangle + \frac{\rho}{2}\|Y^\top X^{k+1}-W \|_{F}^2 \\
        \Lambda^{k+1} := & \ \Lambda^{k} + \rho(Y^\top X^{k+1} - W^{k+1}).
\end{split}
\end{equation}
In our experiment, we chose to solve the $X$-subproblem by a Riemannian gradient descent method.

The parameters are set as follows. For SOC we set $\rho=50$ and $\eta=5\cdot 10^{-6}$, where $\eta$ is the stepsize for the subgradient step; for MADMM we set $\rho=50$ and $\eta=10^{-6}$, where $\eta$ is the stepsize for the $X$ update; for RADMM we set $\rho=50$, $\eta=10^{-4}$ and $\gamma = 10^{-9}$. Again, the parameters are obtained via simple grid searches, also we randomly initialize three algorithms at the same starting point. For all the three algorithms we record the function value for the sequence on the manifold, i.e. $X^k$ for MADMM and RADMM, and $W^k$ for SOC. We terminate the algorithms with termination check $|F(X^{k+1})-F(X^{k})|\leq 10^{-8}$. We record the objective function values in Figures \ref{fig:dpcp_iter2} and \ref{fig:dpcp_cpu_time2}. We also report the objective function values of the final output (denoted as ``obj'') and the constraint violation ($\|X^k-W^k\|_F$ for SOC and $\|Y^\top X^k-W^k\|_F$ for MADMM and RADMM, denoted as ``infeas'') in Table \ref{table_dpcp2}. It can be seen from Figure \ref{fig:dpcp_cpu_time2} and Table \ref{table_dpcp2} that RADMM is more efficient in terms of CPU time, despite small constraint violation.

\begin{figure}[t!]
\begin{center}

\subfigure{\includegraphics[clip, trim=1.5cm 6cm 1.5cm 6cm,width=0.32\columnwidth]{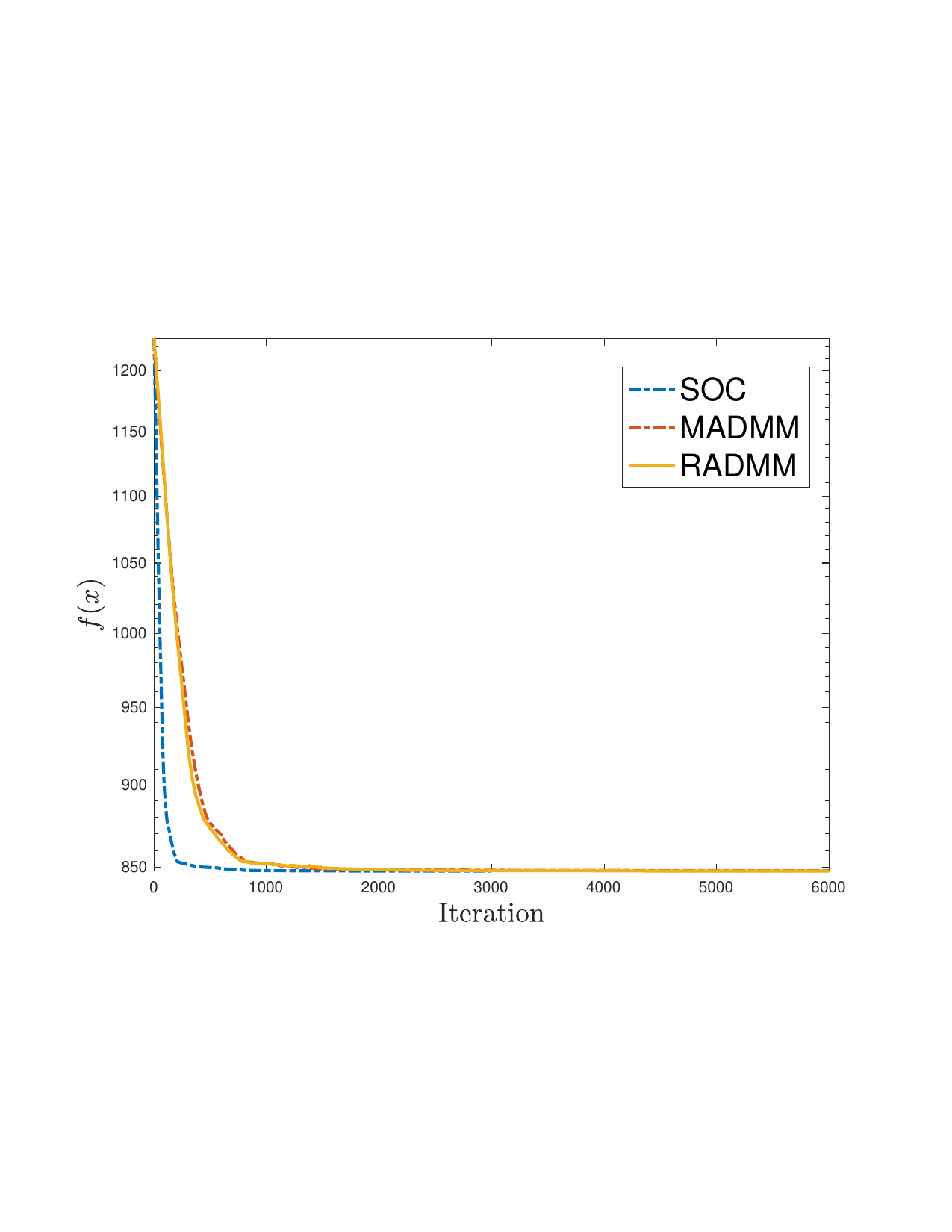}}
\subfigure{\includegraphics[clip, trim=1.5cm 6cm 1.5cm 6cm,width=0.32\columnwidth]{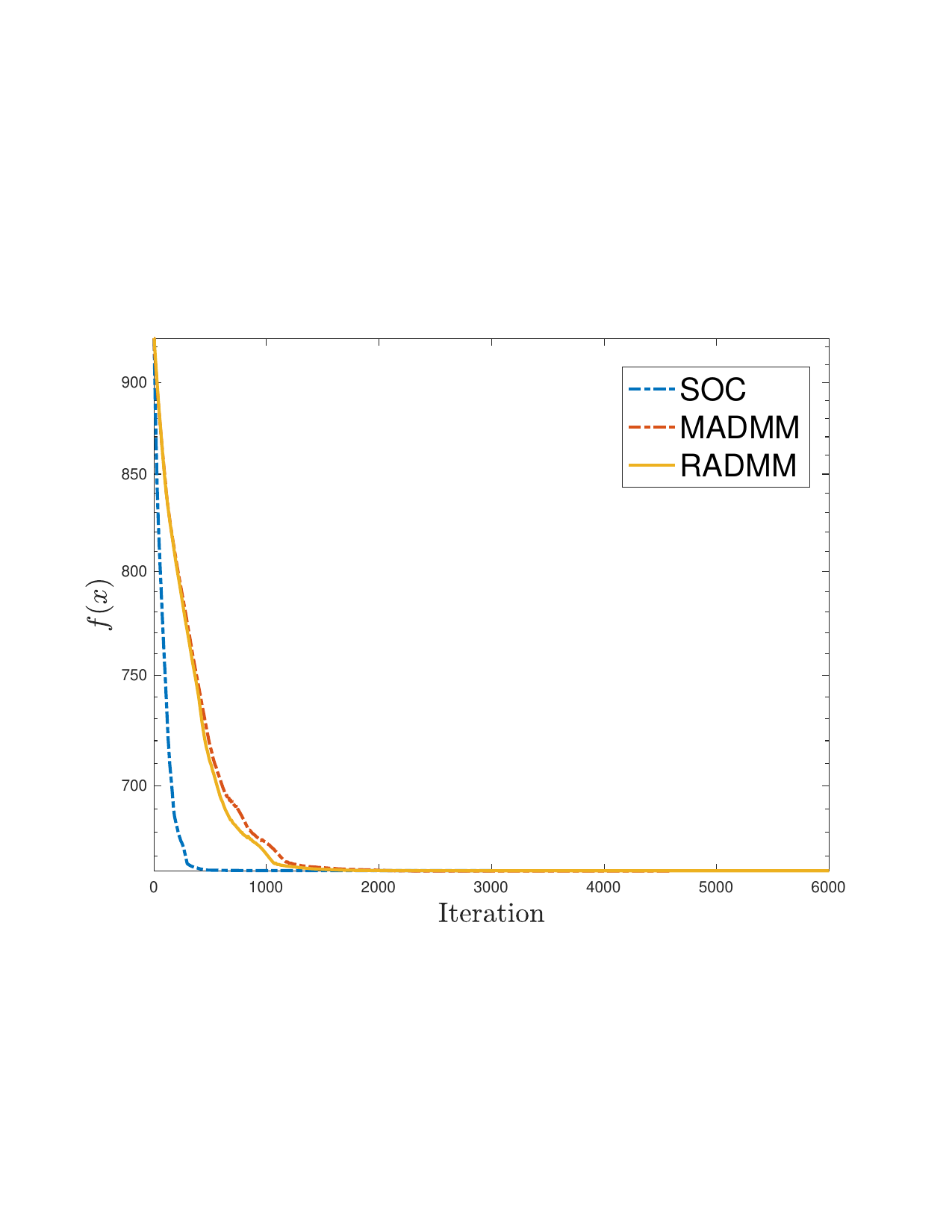}}
\subfigure{\includegraphics[clip, trim=1.5cm 6cm 1.5cm 6cm,width=0.32\columnwidth]{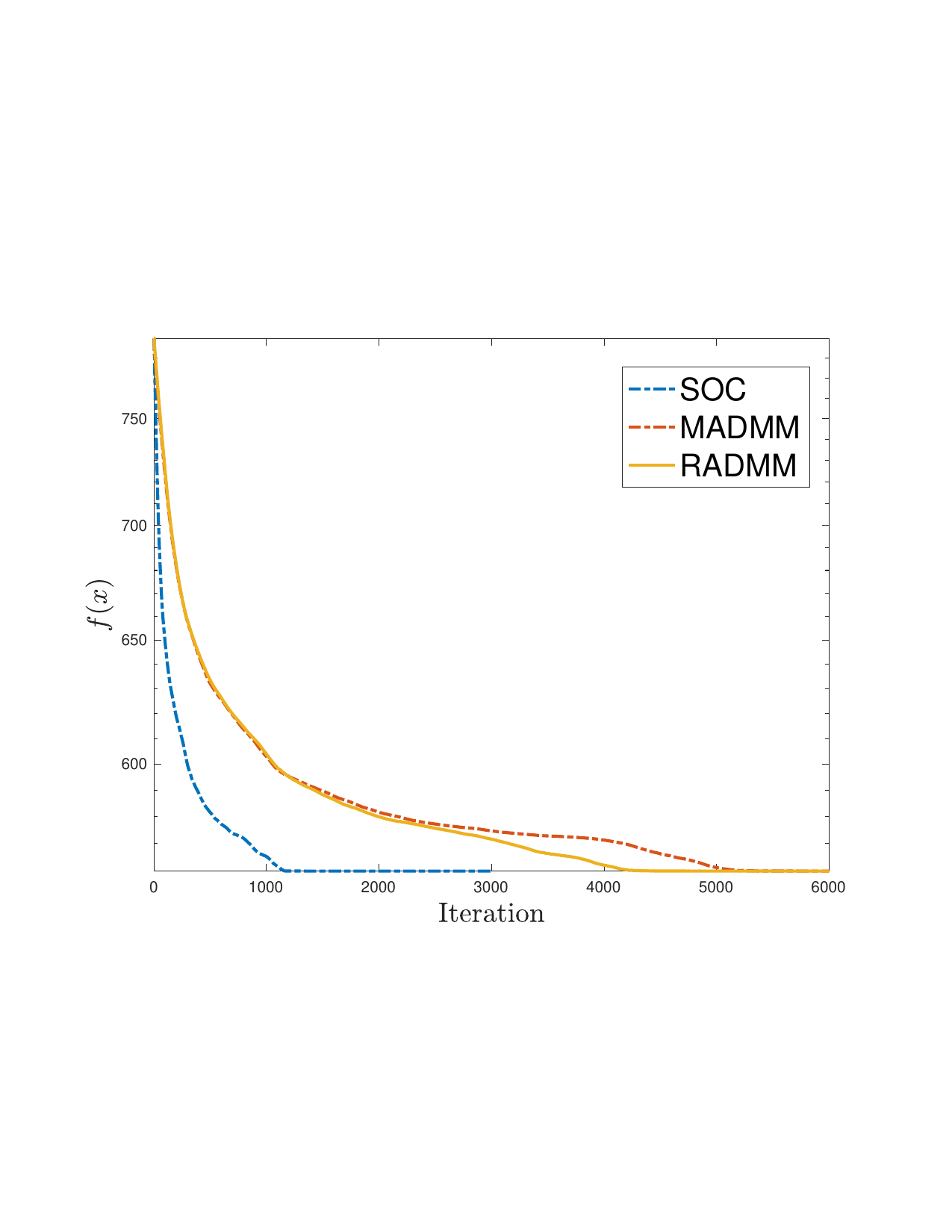}}

\setcounter{subfigure}{0}
\subfigure[$(n, p) = (30, 5)$]{\includegraphics[clip, trim=1.5cm 6cm 1.5cm 6cm,width=0.32\columnwidth]{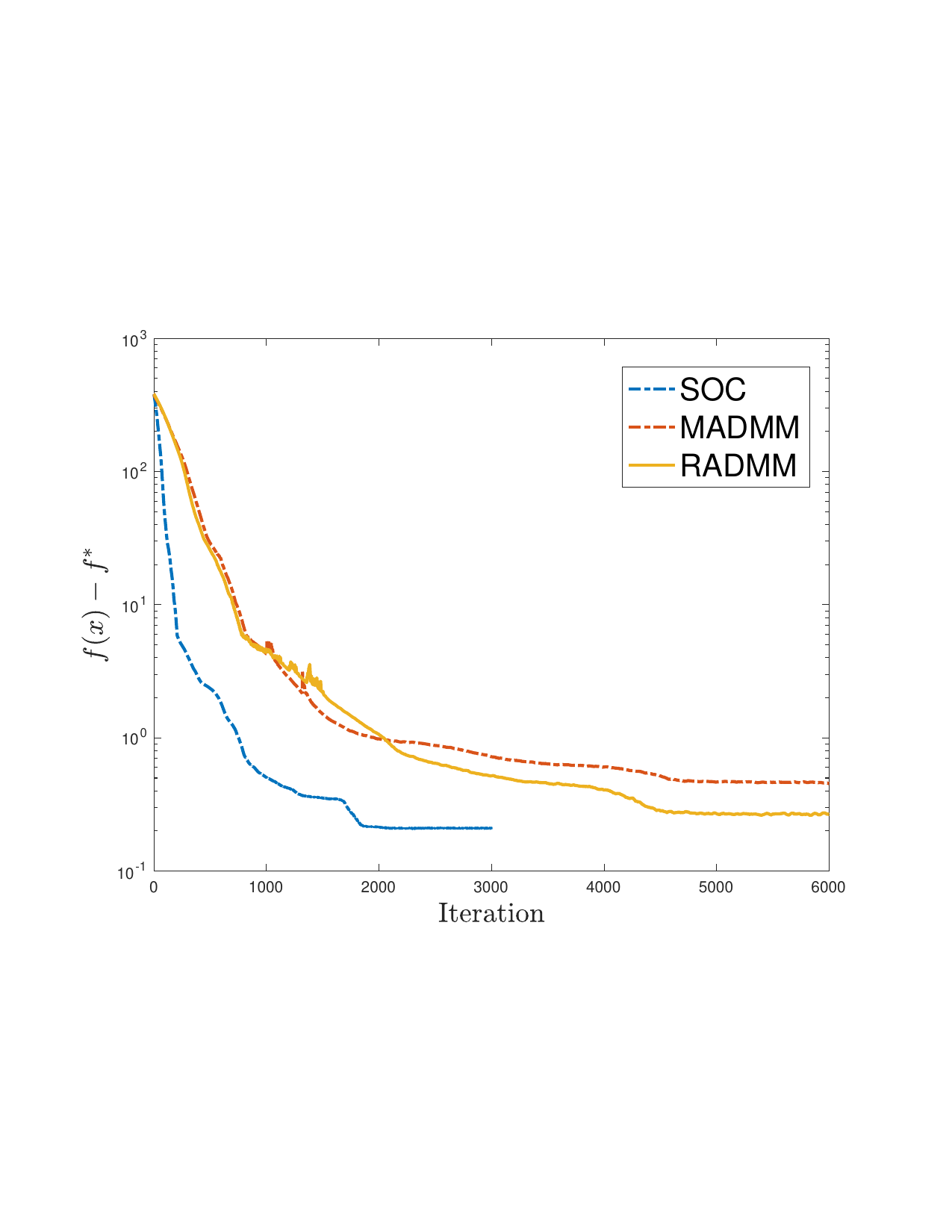}}
\subfigure[$(n, p) = (50, 5)$]{\includegraphics[clip, trim=1.5cm 6cm 1.5cm 6cm,width=0.32\columnwidth]{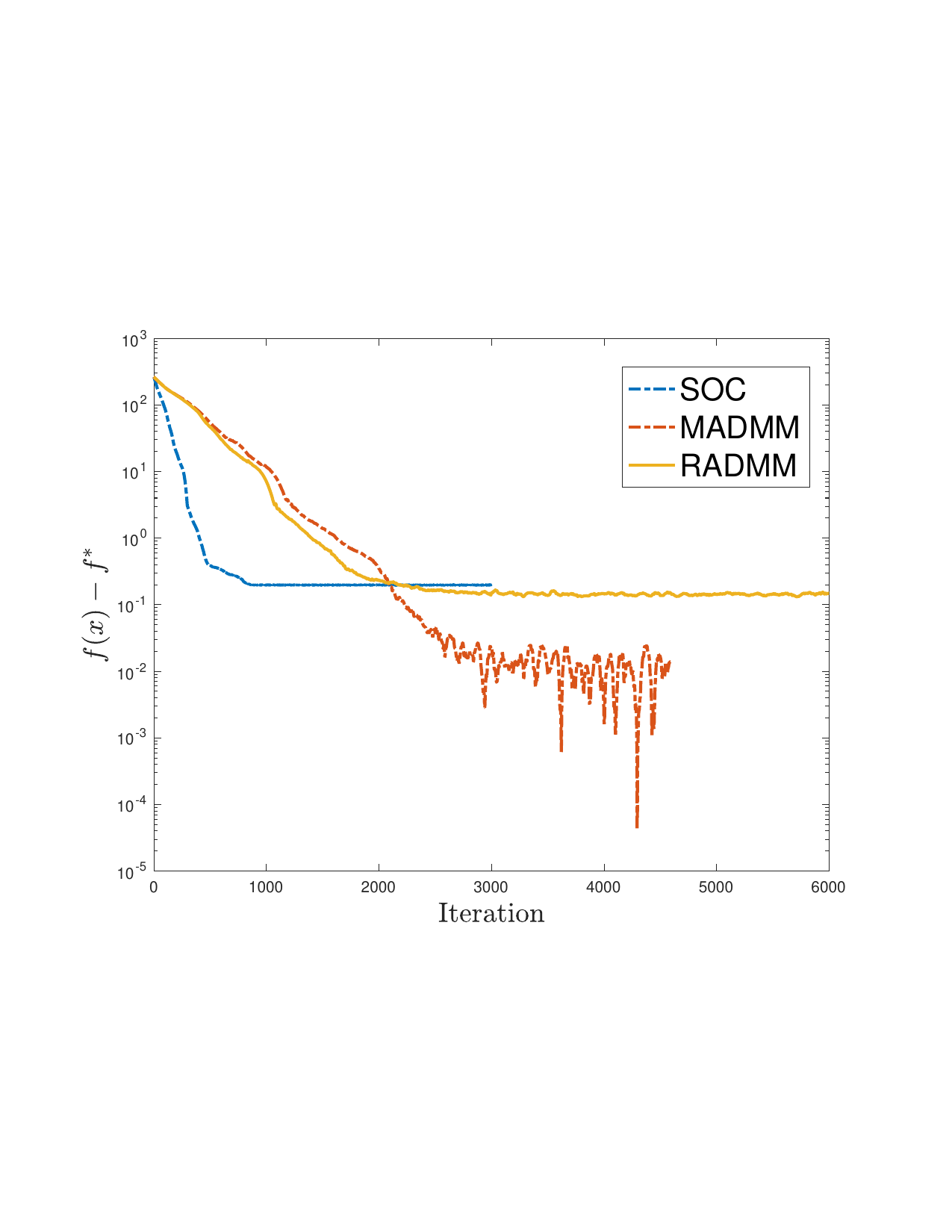}}
\subfigure[$(n, p) = (70, 5)$]{\includegraphics[clip, trim=1.5cm 6cm 1.5cm 6cm,width=0.32\columnwidth]{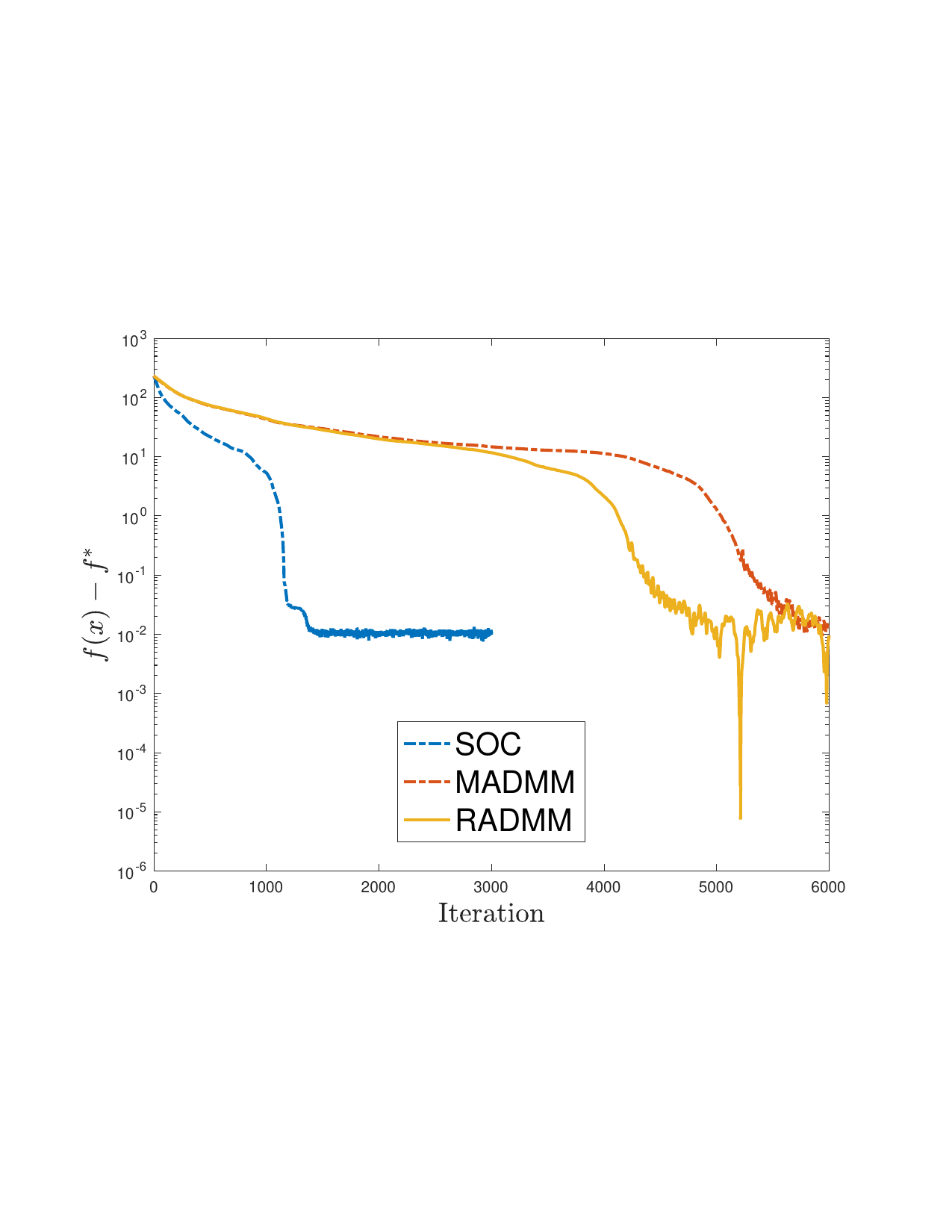}}

\caption{Comparison of SOC, MADMM and RADMM for solving \eqref{DPCP-matrix} with respect to iteration number. The upper row is the plot of function value $f(X^k)$, and the lower row is the plot of $f(X^k)-f^\star$. Note that here $f^*$ is still taken as the minimum function value output by all three algorithms. Each figure is averaged for 5 repeated experiments with random initializations.}
\label{fig:dpcp_iter2}
\end{center}
\end{figure}

\begin{figure}[t!]
\begin{center}
\subfigure{\includegraphics[clip, trim=1.5cm 6cm 1.5cm 6cm,width=0.32\columnwidth]{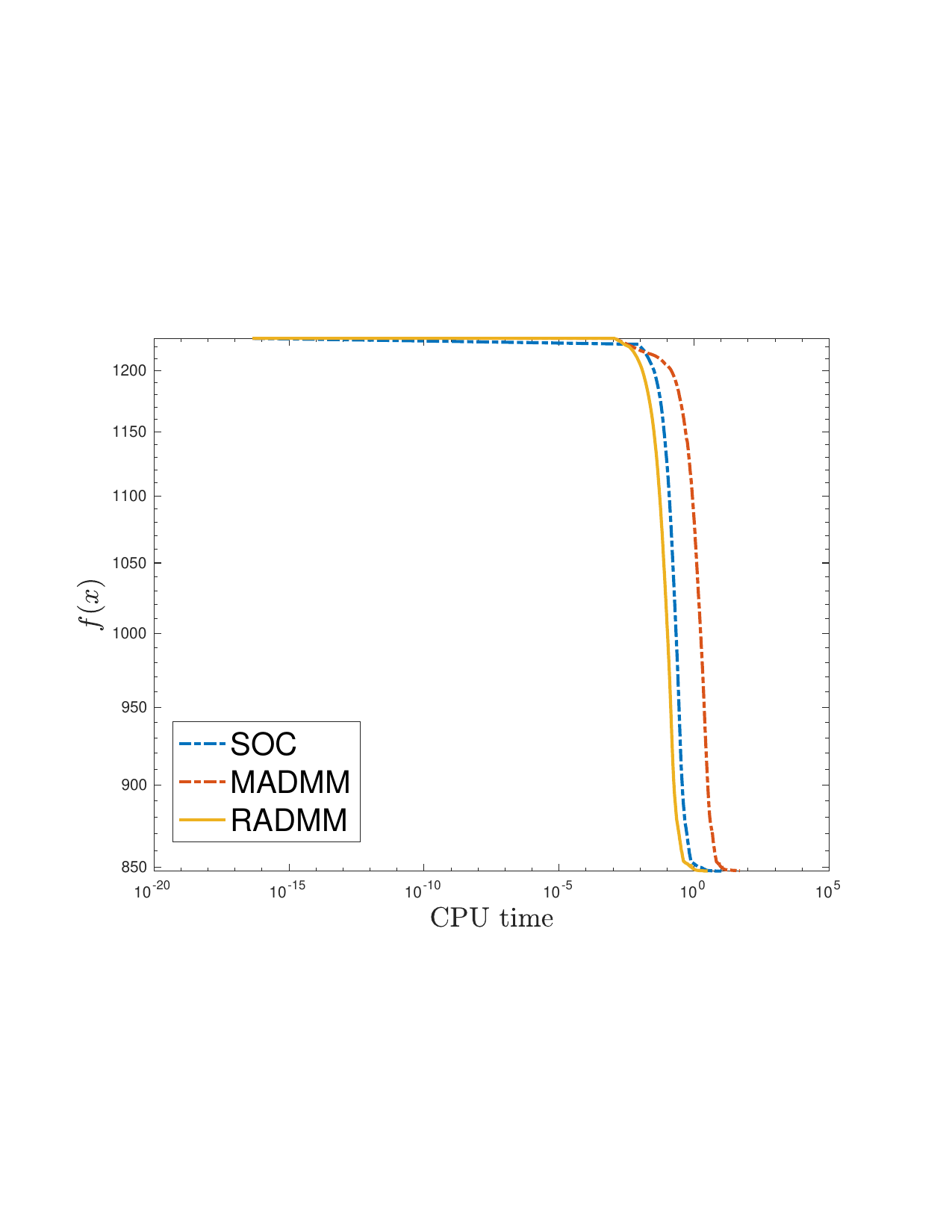}}
\subfigure{\includegraphics[clip, trim=1.5cm 6cm 1.5cm 6cm,width=0.32\columnwidth]{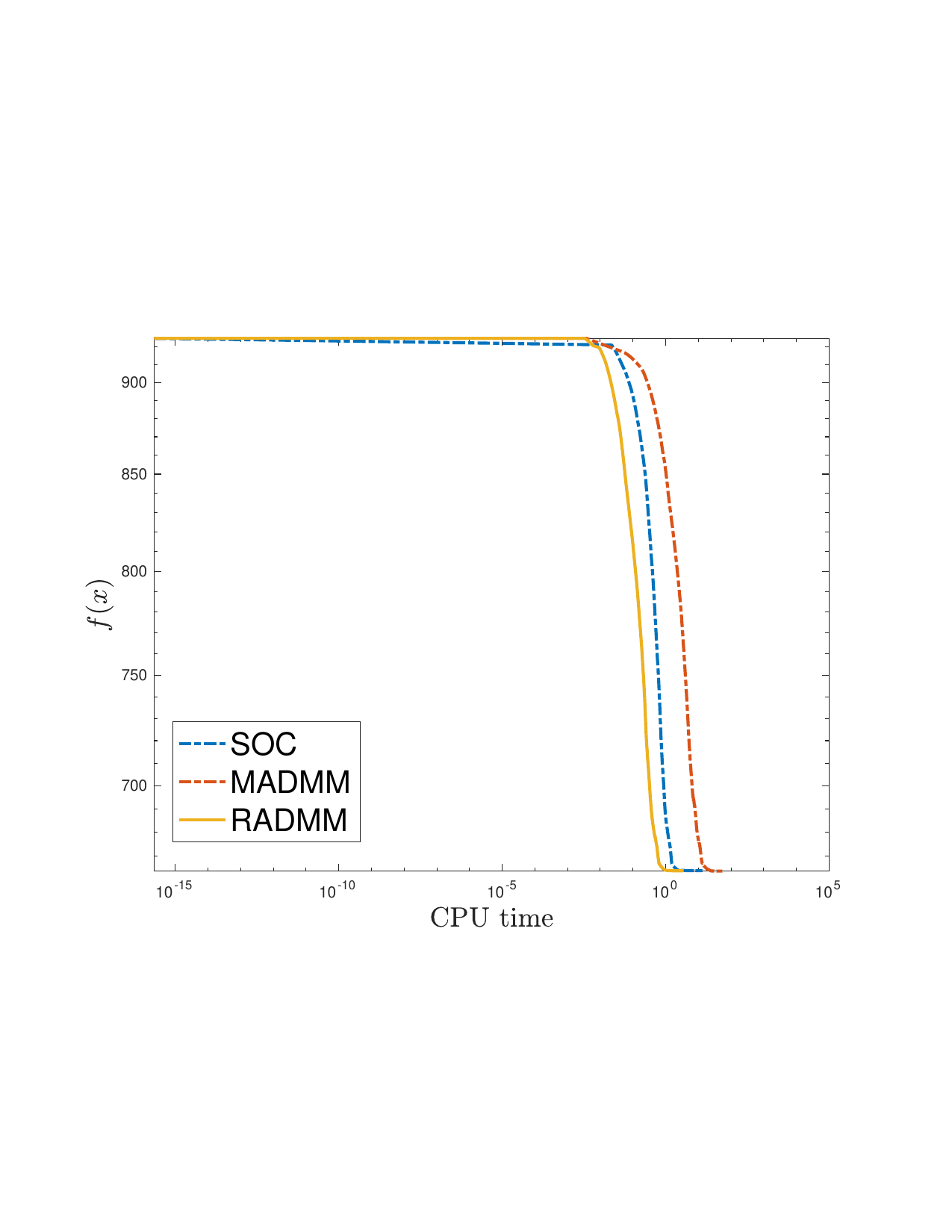}}
\subfigure{\includegraphics[clip, trim=1.5cm 6cm 1.5cm 6cm,width=0.32\columnwidth]{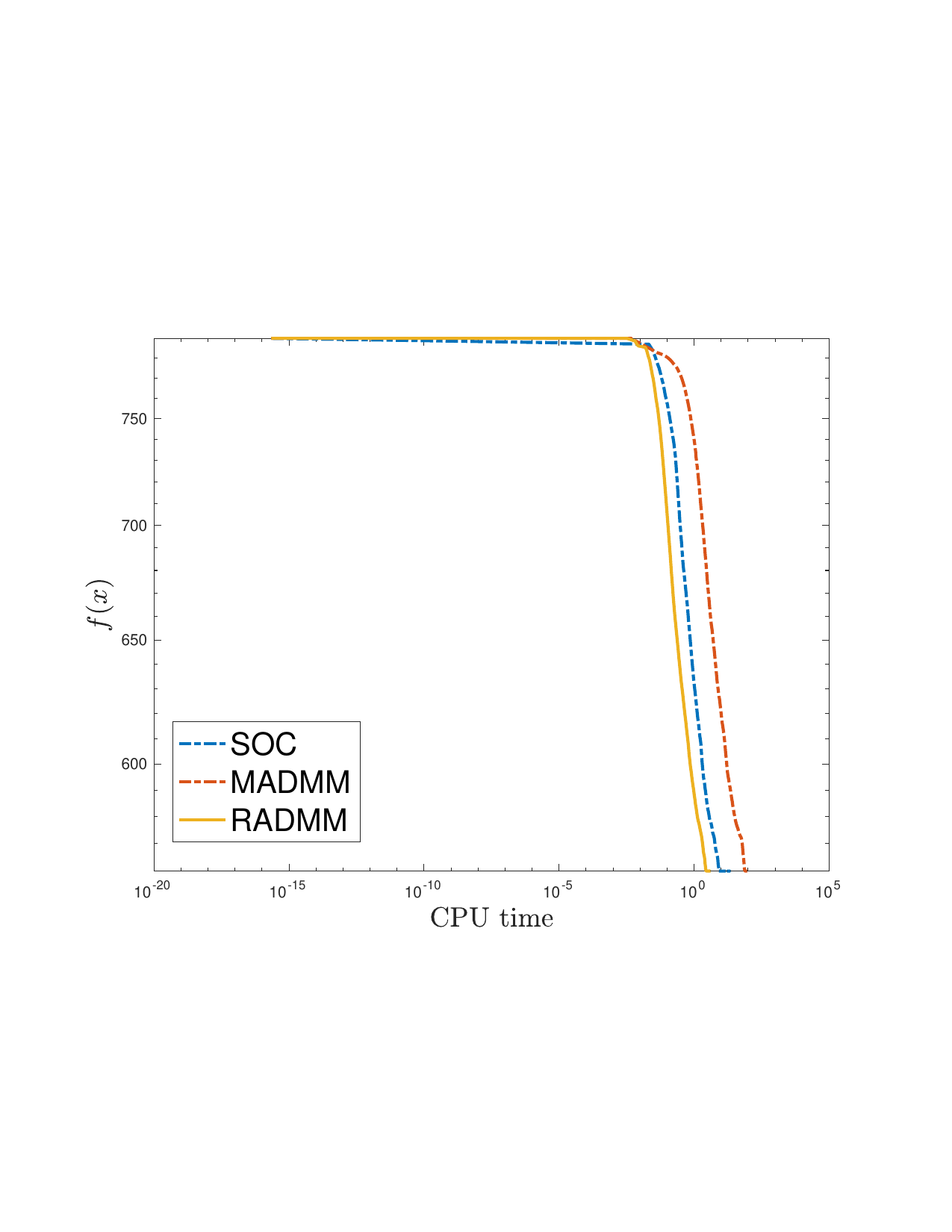}}

\setcounter{subfigure}{0}
\subfigure[$(n, p) = (30, 5)$]{\includegraphics[clip, trim=1.5cm 6cm 1.5cm 6cm,width=0.32\columnwidth]{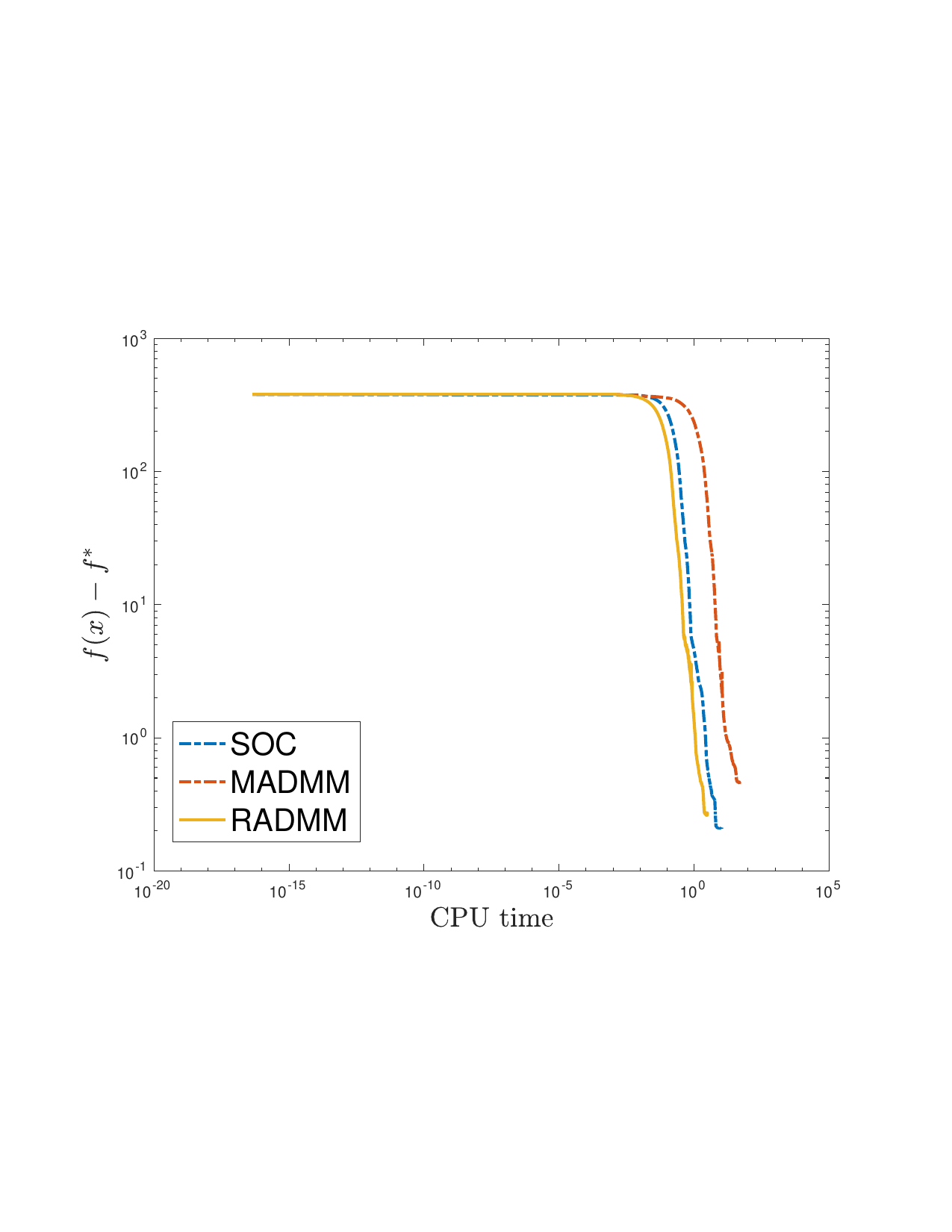}}
\subfigure[$(n, p) = (50, 5)$]{\includegraphics[clip, trim=1.5cm 6cm 1.5cm 6cm,width=0.32\columnwidth]{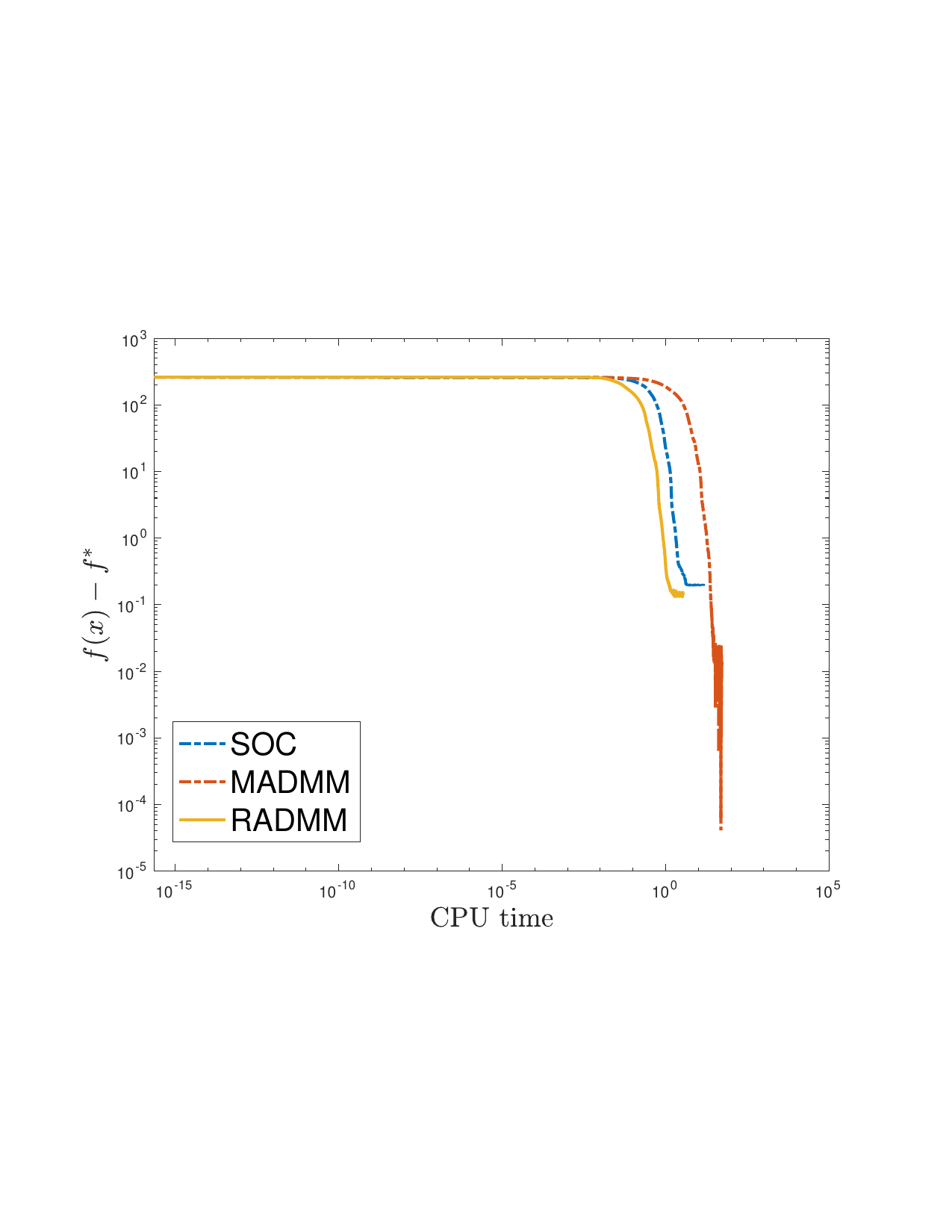}}
\subfigure[$(n, p) = (70, 5)$]{\includegraphics[clip, trim=1.5cm 6cm 1.5cm 6cm,width=0.32\columnwidth]{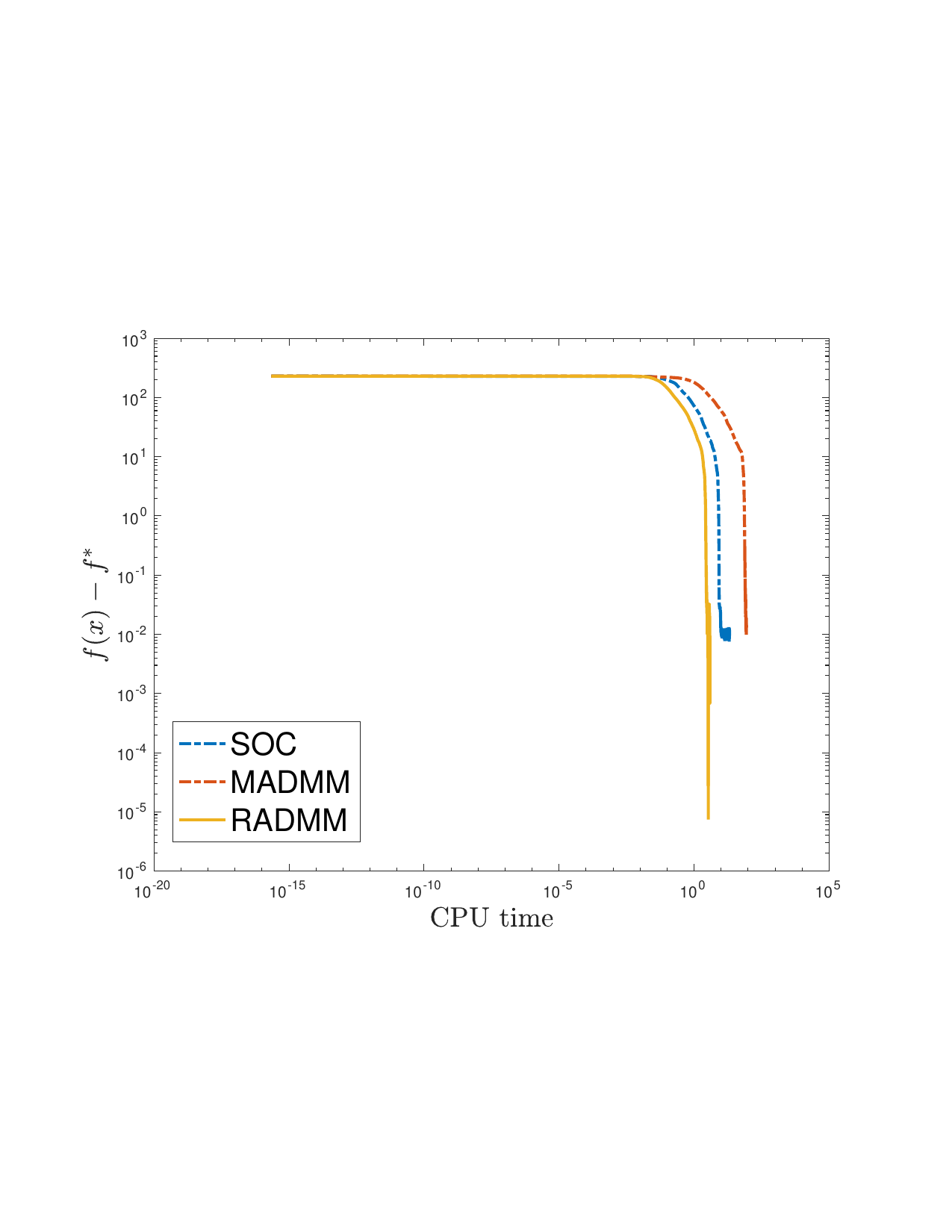}}

\caption{Comparison of SOC, MADMM and RADMM for solving \eqref{DPCP-matrix} with respect to CPU time consumed. The upper row is the plot of function value $f(X^k)$, and the lower row is the plot of $f(X^k)-f^\star$. Note that here $f^*$ is still taken as the minimum function value output by all three algorithms. Each figure is averaged for 5 repeated experiments with random initializations.}
\label{fig:dpcp_cpu_time2}
\end{center}
\end{figure}

\begin{table}[ht]
\begin{center}
\begin{small}
\begin{tabular}{ c | c c c | c c c | c c c }
 \hline
 Settings & \multicolumn{3}{c|}{SOC} & \multicolumn{3}{c|}{MADMM} & \multicolumn{3}{c}{RADMM} \\
 \hline
 $(n, p)$ & obj & CPU & infeas & obj & CPU & infeas & obj & CPU & infeas \\
 \hline
 $(30, 5)$ & 847.7757 & 10.5560 & 0.0000 & 848.0198 & 48.7469 & 0.0021 & 847.8329 & 3.1029 & 0.0020 \\
 $(50, 5)$ & 659.8181 & 10.8222 & 0.0001 & 660.1064 & 42.6082 & 0.0026 & 659.6928 & 2.1563 & 0.0376 \\
 $(70, 5)$ & 559.9930 & 20.5054 & 0.0001 & 559.9970 & 86.9119 & 0.0018 & 559.9925 & 3.8470 & 0.0019  \\
 \hline
\end{tabular}
\end{small}
\end{center}
\caption{Comparison of SOC, MADMM and RADMM for solving DPCP problem, with the stopping criterion $\left|F\left(X^{k+1}\right)-F\left(X^k\right)\right|<10^{-8}$. The results are averaged for 5 repeated experiments with random initializations.}
\label{table_dpcp2}
\end{table}

\section{Conclusion}\label{sec_conclusion}

In this paper, we proposed a Riemannian ADMM for solving a class of Riemannian optimization problem with nonsmooth objective function. All steps of our Riemannian ADMM are easy to compute and implement, which gives the potential to be applied to solving large-scale problems.
Our method is based on a Moreau envelop smoothing technique. How to design ADMM for solving \eqref{problem_nonsmooth} without smoothing remains an open question for future work.

\clearpage
{
\bibliography{bibfile,manifold,manifold1,NSF-nonsmooth-manifold,MKSSC_reference}

\begin{thebibliography}{10}

\bibitem{absil2009optimization}
P-A Absil, Robert Mahony, and Rodolphe Sepulchre.
\newblock {\em Optimization Algorithms on Matrix Manifolds}.
\newblock Princeton University Press, 2009.

\bibitem{beck2017first}
Amir Beck.
\newblock {\em First-order Methods in Optimization}.
\newblock SIAM, 2017.

\bibitem{bendory2017non}
Tamir Bendory, Yonina~C Eldar, and Nicolas Boumal.
\newblock Non-convex phase retrieval from {STFT} measurements.
\newblock {\em IEEE Transactions on Information Theory}, 64(1):467--484, 2017.

\bibitem{borckmans2014riemannian}
Pierre~B Borckmans, S~Easter Selvan, Nicolas Boumal, and P-A Absil.
\newblock A {R}iemannian subgradient algorithm for economic dispatch with
  valve-point effect.
\newblock {\em Journal of Computational and Applied Mathematics}, 255:848--866,
  2014.

\bibitem{boumal2020introduction}
Nicolas Boumal.
\newblock {\em An introduction to optimization on smooth manifolds}.
\newblock Cambridge University Press, 2023.

\bibitem{boumal2011rtrmc}
Nicolas Boumal and Pierre-antoine Absil.
\newblock {RTRMC}: A {R}iemannian trust-region method for low-rank matrix
  completion.
\newblock {\em NIPS}, 24:406--414, 2011.

\bibitem{boumal2019global}
Nicolas Boumal, Pierre-Antoine Absil, and Coralia Cartis.
\newblock Global rates of convergence for nonconvex optimization on manifolds.
\newblock {\em IMA Journal of Numerical Analysis}, 39(1):1--33, 2019.

\bibitem{carson2017manifold}
Timothy Carson, Dustin~G Mixon, and Soledad Villar.
\newblock Manifold optimization for k-means clustering.
\newblock In {\em 2017 International Conference on Sampling Theory and
  Applications (SampTA)}, pages 73--77. IEEE, 2017.

\bibitem{chen2019manifold}
Shixiang Chen, Zengde Deng, Shiqian Ma, and Anthony Man-Cho So.
\newblock Manifold proximal point algorithms for dual principal component
  pursuit and orthogonal dictionary learning.
\newblock {\em IEEE Transactions on Signal Processing}, 69:4759--4773, 2021.

\bibitem{chen2020proximal}
Shixiang Chen, Shiqian Ma, Anthony Man-Cho~So, and Tong Zhang.
\newblock Proximal gradient method for nonsmooth optimization over the
  {S}tiefel manifold.
\newblock {\em SIAM Journal on Optimization}, 30(1):210--239, 2020.

\bibitem{cherian2016riemannian}
Anoop Cherian and Suvrit Sra.
\newblock Riemannian dictionary learning and sparse coding for positive
  definite matrices.
\newblock {\em IEEE transactions on neural networks and learning systems},
  28(12):2859--2871, 2016.

\bibitem{daspremont-sparsePCA-direct-formulation-2007}
A.~d'Aspremont, L.~El Ghaoui, M.~I. Jordan, and G.~R.~G. Lanckriet.
\newblock A direct formulation for sparse {PCA} using semidefinite programming.
\newblock {\em SIAM Review}, 49(3):434--448, 2007.

\bibitem{Demanet-Hand-2014}
L.~Demanet and P.~Hand.
\newblock Scaling law for recovering the sparsest element in a subspace.
\newblock {\em Information and Inference}, 3(4):295--309, 2014.

\bibitem{ferreira1998subgradient}
OP~Ferreira and PR~Oliveira.
\newblock Subgradient algorithm on {R}iemannian manifolds.
\newblock {\em Journal of Optimization Theory and Applications}, 97(1):93--104,
  1998.

\bibitem{grohs2016nonsmooth}
Philipp Grohs and Seyedehsomayeh Hosseini.
\newblock Nonsmooth trust region algorithms for locally {L}ipschitz functions
  on {R}iemannian manifolds.
\newblock {\em IMA Journal of Numerical Analysis}, 36(3):1167--1192, 2016.

\bibitem{grohs2016varepsilon}
Philipp Grohs and Seyedehsomayeh Hosseini.
\newblock $\varepsilon$-subgradient algorithms for locally {L}ipschitz
  functions on {R}iemannian manifolds.
\newblock {\em Advances in Computational Mathematics}, 42(2):333--360, 2016.

\bibitem{hosseini2015convergence}
S.~Hosseini.
\newblock Convergence of nonsmooth descent methods via {K}urdyka--{L}ojasiewicz
  inequality on {R}iemannian manifolds.
\newblock {\em Hausdorff Center for Mathematics and Institute for Numerical
  Simulation, University of Bonn (2015,(INS Preprint No. 1523))}, 2015.

\bibitem{hosseini2018line}
Seyedehsomayeh Hosseini, Wen Huang, and Rohollah Yousefpour.
\newblock Line search algorithms for locally {L}ipschitz functions on
  {R}iemannian manifolds.
\newblock {\em SIAM Journal on Optimization}, 28(1):596--619, 2018.

\bibitem{hosseini2017riemannian}
Seyedehsomayeh Hosseini and Andr{\'e} Uschmajew.
\newblock A {R}iemannian gradient sampling algorithm for nonsmooth optimization
  on manifolds.
\newblock {\em SIAM Journal on Optimization}, 27(1):173--189, 2017.

\bibitem{hotelling1933analysis}
H.~Hotelling.
\newblock Analysis of a complex of statistical variables into principal
  components.
\newblock {\em Journal of Educational Psychology}, 24(6):417--441, 1933.

\bibitem{huang2018blind}
Wen Huang and Paul Hand.
\newblock Blind deconvolution by a steepest descent algorithm on a quotient
  manifold.
\newblock {\em SIAM Journal on Imaging Sciences}, 11(4):2757--2785, 2018.

\bibitem{huang2019extension}
Wen Huang and Ke~Wei.
\newblock An extension of fast iterative shrinkage-thresholding to {R}iemannian
  optimization for sparse principal component analysis.
\newblock {\em arXiv preprint arXiv:1909.05485}, 2019.

\bibitem{Huang-Wei-RPG}
Wen Huang and Ke~Wei.
\newblock {R}iemannian proximal gradient methods.
\newblock {\em Mathematical Programming}, 194:371--413, 2022.

\bibitem{Jiang-Lin-Ma-Zhang-nonconvex-2016}
B.~Jiang, T.~Lin, S.~Ma, and S.~Zhang.
\newblock Structured nonconvex and nonsmooth optimization: Algorithms and
  iteration complexity analysis.
\newblock {\em Computational Optimization and Applications}, 72(1):115--157,
  2019.

\bibitem{Jolliffe2003}
I.~Jolliffe, N.~Trendafilov, and M.~Uddin.
\newblock A modified principal component technique based on the {L}asso.
\newblock {\em Journal of computational and Graphical Statistics},
  12(3):531--547, 2003.

\bibitem{Journee-Nesterov-sparsePCA-JMLR-2010}
M.~Journee, Yu. Nesterov, P.~Richtarik, and R.~Sepulchre.
\newblock Generalized power method for sparse principal component analysis.
\newblock {\em J. Mach. Learn. Res.}, 11:517--553, 2010.

\bibitem{kovnatsky-madmm-2016}
A.~Kovnatsky, K.~Glashoff, and M.~M. Bronstein.
\newblock {MADMM}: a generic algorithm for non-smooth optimization on
  manifolds.
\newblock In {\em ECCV}, pages 680--696, 2016.

\bibitem{lai2014splitting}
Rongjie Lai and Stanley Osher.
\newblock A splitting method for orthogonality constrained problems.
\newblock {\em Journal of Scientific Computing}, 58(2):431--449, 2014.

\bibitem{Lerman-RSR-IRLS-2018}
G.~Lerman and T.~Maunu.
\newblock Fast, robust and non-convex subspace recovery.
\newblock {\em Information and Inference: A Journal of IMA}, 7:277--336, 2018.

\bibitem{Lerman-PIEEE-survey}
G.~Lerman and T.~Maunu.
\newblock An overview of robust subspace recovery.
\newblock {\em Proceedings of the IEEE}, 106(8):1380--1410, 2018.

\bibitem{Lerman-convex-relax-FoCM-2015}
G.~Lerman, M.~B. McCoy, J.~A. Tropp, and T.~Zhang.
\newblock Robust computation of linear models by convex relaxation.
\newblock {\em Foundations of Computational Mathematics}, 15:363--410, 2015.

\bibitem{Li-ZO-MOR}
J.~Li, K.~Balasubramanian, and S.~Ma.
\newblock Stochastic zeroth-order {R}iemannian derivative estimation and
  optimization.
\newblock {\em accepted in Mathematics of Operations Research}, 2022.

\bibitem{li2019weakly}
Xiao Li, Shixiang Chen, Zengde Deng, Qing Qu, Zhihui Zhu, and Anthony Man~Cho
  So.
\newblock Weakly convex optimization over {S}tiefel manifold using {R}iemannian
  subgradient-type methods.
\newblock {\em SIAM J. Optimization}, 31(3):1605--1634, 2021.

\bibitem{lu2018nonconvex}
Canyi Lu, Jiashi Feng, Zhouchen Lin, and Shuicheng Yan.
\newblock Nonconvex sparse spectral clustering by alternating direction method
  of multipliers and its convergence analysis.
\newblock In {\em Proceedings of the AAAI Conference on Artificial
  Intelligence}, volume~32, 2018.

\bibitem{Ma-SPCA-2011-submit}
S.~Ma.
\newblock Alternating direction method of multipliers for sparse principal
  component analysis.
\newblock {\em Journal of the Operations Research Society of China},
  1(2):253--274, 2013.

\bibitem{Lerman-stochastic-private-2022}
T.~Maunu, C.~Yu, and G.~Lerman.
\newblock Stochastic and private nonconvex outlier-robust {PCA}.
\newblock {\em Journal of Machine Learning Research}, 190:173--188, 2022.

\bibitem{Lerman-well-tempered-2019}
T.~Maunu, T.~Zhang, and G.~Lerman.
\newblock A well-tempered landscape for non-convex robust subspace recovery.
\newblock {\em Journal of Machine Learning Research}, 20:1--59, 2019.

\bibitem{park2018spectral}
Seyoung Park and Hongyu Zhao.
\newblock Spectral clustering based on learning similarity matrix.
\newblock {\em Bioinformatics}, 34(12):2069--2076, 2018.

\bibitem{pearson1901liii}
K.~Pearson.
\newblock Liii. on lines and planes of closest fit to systems of points in
  space.
\newblock {\em The London, Edinburgh, and Dublin Philosophical Magazine and
  Journal of Science}, 2(11):559--572, 1901.

\bibitem{Qu-Sun-Wright-sparse-vector-2016}
Q.~Qu, J.~Sun, and J.~Wright.
\newblock Finding a sparse vector in subspace: linear sparsity using
  alternating directions.
\newblock {\em IEEE Trans. Information Theory}, 62(10):5855--5880, 2016.

\bibitem{Shen-Huang-spca-2008}
H.~Shen and J.~Z. Huang.
\newblock Sparse principal component analysis via regularized low rank matrix
  approximation.
\newblock {\em Journal of Multivariate Analysis}, 99(6):1015--1034, 2008.

\bibitem{Spielman-Wang-Wright-2012}
D.~Spielman, H.~Wang, and J.~Wright.
\newblock Exact recovery of sparsely-used dictionaries.
\newblock In {\em COLT}, 2012.

\bibitem{Sun-CDR-part1-2017}
J.~Sun, Q.~Qu, and J.~Wright.
\newblock Complete dictionary recovery over the sphere i: Overview and the
  geometric picture.
\newblock {\em IEEE Trans. Information Theory}, 63(2):853--884, 2017.

\bibitem{Sun-CDR-part2-2017}
J.~Sun, Q.~Qu, and J.~Wright.
\newblock Complete dictionary recovery over the sphere ii: Recovery by
  {R}iemannian trust-region method.
\newblock {\em IEEE Trans. Information Theory}, 63(2):885--914, 2017.

\bibitem{sun2018geometric}
Ju~Sun, Qing Qu, and John Wright.
\newblock A geometric analysis of phase retrieval.
\newblock {\em Foundations of Computational Mathematics}, 18(5):1131--1198,
  2018.

\bibitem{themelis2020douglas}
Andreas Themelis and Panagiotis Patrinos.
\newblock {D}ouglas--{R}achford splitting and admm for nonconvex optimization:
  Tight convergence results.
\newblock {\em SIAM Journal on Optimization}, 30(1):149--181, 2020.

\bibitem{Tsakiris-Vidal-2018}
M.~C. Tsakiris and R.~Vidal.
\newblock Dual principal component pursuit.
\newblock {\em Journal of Machine Learning Research}, 2018.

\bibitem{tsakiris2015dual}
Manolis~C Tsakiris and Ren{\'e} Vidal.
\newblock Dual principal component pursuit.
\newblock In {\em Proceedings of the IEEE International Conference on Computer
  Vision Workshops}, pages 10--18, 2015.

\bibitem{vandereycken2013low}
Bart Vandereycken.
\newblock Low-rank matrix completion by {R}iemannian optimization.
\newblock {\em SIAM Journal on Optimization}, 23(2):1214--1236, 2013.

\bibitem{wang2019global}
Yu~Wang, Wotao Yin, and Jinshan Zeng.
\newblock Global convergence of {ADMM} in nonconvex nonsmooth optimization.
\newblock {\em Journal of Scientific Computing}, 78(1):29--63, 2019.

\bibitem{wang2020manifold}
Zhongruo Wang, Bingyuan Liu, Shixiang Chen, Shiqian Ma, Lingzhou Xue, and
  Hongyu Zhao.
\newblock A manifold proximal linear method for sparse spectral clustering with
  application to single-cell rna sequencing data analysis.
\newblock {\em INFORMS Journal on Optimization}, 4(2):200--214, 2022.

\bibitem{xiao2018regularized}
Xiantao Xiao, Yongfeng Li, Zaiwen Wen, and Liwei Zhang.
\newblock A regularized semi-smooth {N}ewton method with projection steps for
  composite convex programs.
\newblock {\em Journal of Scientific Computing}, 76(1):364--389, 2018.

\bibitem{yang2017alternating}
Lei Yang, Ting~Kei Pong, and Xiaojun Chen.
\newblock Alternating direction method of multipliers for a class of nonconvex
  and nonsmooth problems with applications to background/foreground extraction.
\newblock {\em SIAM Journal on Imaging Sciences}, 10(1):74--110, 2017.

\bibitem{Yang-manifold-optimality-2014}
W.~H. Yang, L.-H. Zhang, and R.~Song.
\newblock Optimality conditions for the nonlinear programming problems on
  {R}iemannian manifolds.
\newblock {\em Pacific J. Optimization}, 10(2):415--434, 2014.

\bibitem{zeng2022moreau}
Jinshan Zeng, Wotao Yin, and Ding-Xuan Zhou.
\newblock Moreau envelope augmented {L}agrangian method for nonconvex
  optimization with linear constraints.
\newblock {\em Journal of Scientific Computing}, 91(2):1--36, 2022.

\bibitem{zhang2020proximal}
Jiawei Zhang and Zhi-Quan Luo.
\newblock A proximal alternating direction method of multiplier for linearly
  constrained nonconvex minimization.
\newblock {\em SIAM Journal on Optimization}, 30(3):2272--2302, 2020.

\bibitem{zhang2022global}
Jiawei Zhang and Zhi-Quan Luo.
\newblock A global dual error bound and its application to the analysis of
  linearly constrained nonconvex optimization.
\newblock {\em SIAM Journal on Optimization}, 32(3):2319--2346, 2022.

\bibitem{zhang2022iteration}
Jiawei Zhang, Wenqiang Pu, and Zhi-Quan Luo.
\newblock On the iteration complexity of smoothed proximal alm for nonconvex
  optimization problem with convex constraints.
\newblock {\em arXiv preprint arXiv:2207.06304}, 2022.

\bibitem{zhang2019primal}
Junyu Zhang, Shiqian Ma, and Shuzhong Zhang.
\newblock Primal-dual optimization algorithms over {R}iemannian manifolds: an
  iteration complexity analysis.
\newblock {\em Mathematical Programming}, 184(1):445--490, 2020.

\bibitem{zhou2022semismooth}
Yuhao Zhou, Chenglong Bao, Chao Ding, and Jun Zhu.
\newblock A semismooth {N}ewton based augmented {L}agrangian method for
  nonsmooth optimization on matrix manifolds.
\newblock {\em Mathematical Programming}, pages 1--61, 2022.

\bibitem{zhu2018dual}
Zhihui Zhu, Yifan Wang, Daniel Robinson, Daniel Naiman, Rene Vidal, and Manolis
  Tsakiris.
\newblock Dual principal component pursuit: Improved analysis and efficient
  algorithms.
\newblock {\em Advances in Neural Information Processing Systems}, 31, 2018.

\bibitem{Zou-spca-2006}
H.~Zou, T.~Hastie, and R.~Tibshirani.
\newblock Sparse principal component analysis.
\newblock {\em J. Comput. Graph. Stat.}, 15(2):265--286, 2006.

\bibitem{zou2018selective}
Hui Zou and Lingzhou Xue.
\newblock A selective overview of sparse principal component analysis.
\newblock {\em Proceedings of the IEEE}, 106(8):1311--1320, 2018.

\end{thebibliography}
\bibliographystyle{plain}
}

\end{document}